\documentclass[12pt]{amsart}
\usepackage{mathrsfs}
\usepackage{amsfonts}
\usepackage{amssymb}
\usepackage{amsfonts, amscd, amsmath, mathrsfs, amssymb, amsthm, amsxtra, bbding, epsfig, eucal, graphicx, latexsym, url, mathbbol, bbold}
\usepackage[papersize={7.7in,10.3in},textwidth=14.8cm,textheight=20.5cm,centering]{geometry}
\usepackage{enumerate}

\usepackage{xcolor}
\definecolor{cite}{rgb}{0.00,0.60,1.00}
\definecolor{url}{rgb}{1.00,0.10,0.80}
\definecolor{link}{rgb}{0.00,0.00,1.00}
\usepackage[colorlinks,linkcolor=link,urlcolor=url,citecolor=cite,pagebackref,breaklinks]{hyperref}

\hypersetup{
pdfstartpage=1,
pdfstartview=FitH}






\DeclareFontFamily{U}{mathx}{\hyphenchar\font45}
\DeclareFontShape{U}{mathx}{m}{n}{
      <5> <6> <7> <8> <9> <10>
      <10.95> <12> <14.4> <17.28> <20.74> <24.88>
      mathx10
      }{}
\DeclareSymbolFont{mathx}{U}{mathx}{m}{n}
\DeclareMathAccent{\widecheck}{\mathalpha}{mathx}{"71}

 \usepackage{caption} 
\numberwithin{equation}{section}

\allowdisplaybreaks

\newtheorem{theorem}{Theorem}[section]
\newtheorem{lemma}{Lemma}[section]

\newtheorem{definition}{Definition}[section]
\newtheorem{proposition}{Proposition}[section]

\newtheorem{corollary}{Corollary}[section]

\makeatletter
\newcounter{roem}
\renewcommand{\theroem}{\Roman{roem}}

\newcommand{\c@org@eq}{}
\let\c@org@eq\c@equation
\newcommand{\org@theeq}{}
\let\org@theeq\theequation

\newcommand{\setroem}{
\let\c@equation\c@roem
 \let\theequation\theroem}

\newcommand{\setarab}{
\let\c@equation\c@org@eq
\let\theequation\org@theeq}
\makeatother

\newtheorem*{claim*}{Claim}

\theoremstyle{remark}
\newtheorem{remark}{\bf Remark}

\newcommand{\ud}{\mathrm{d}}
\newcommand{\ue}{\mathrm{e}}
\newcommand{\ft}{\mathrm{FT}}
\newcommand{\Gal}{\mathrm{Gal}}

\newcommand{\tr}{\mathrm{tr}}

\newcommand{\rank}{\mathrm{rank}}
\newcommand{\Swan}{\mathrm{Swan}}
\newcommand{\kl}{\mathrm{Kl}}
\newcommand{\Frob}{\mathrm{Frob}}
\newcommand{\Drop}{\mathrm{Drop}}

\DeclareMathOperator{\Mod}{mod}

\renewcommand{\bmod}[1]{\,(\Mod{ #1})}

\newcommand{\ba}{\boldsymbol{a}}

\newcommand{\bh}{\boldsymbol{h}}

\newcommand{\bn}{\boldsymbol{n}}

\newcommand{\bA}{\mathbf{A}}
\newcommand{\bC}{\mathbf{C}}
\newcommand{\bF}{\mathbf{F}}

\newcommand{\bP}{\mathbf{P}}
\newcommand{\bQ}{\mathbf{Q}}
\newcommand{\bR}{\mathbf{R}}
\newcommand{\bZ}{\mathbf{Z}}

\newcommand{\cA}{\mathcal{A}}
\newcommand{\cB}{\mathcal{B}}

\newcommand{\cF}{\mathcal{F}}
\newcommand{\cG}{\mathcal{G}}

\newcommand{\cI}{\mathcal{I}}
\newcommand{\cJ}{\mathcal{J}}
\newcommand{\cK}{\mathcal{K}}
\newcommand{\cL}{\mathcal{L}}

\newcommand{\cP}{\mathcal{P}}

\newcommand{\fc}{\mathfrak{c}}

\newcommand{\fA}{\mathfrak{A}}
\newcommand{\fJ}{\mathfrak{J}}
\newcommand{\fS}{\mathfrak{S}}

\newcommand{\tm}{\tilde{m}}
\newcommand{\tn}{\tilde{n}}

\def\le{\leqslant}
\def\leq{\leqslant}
\def\ge{\geqslant}
\def\geq{\geqslant}

\usepackage{graphicx}
\usepackage{tikz}

\begin{document}

\vglue -2mm

\title[Arithmetic exponent pairs for algebraic trace functions]
{Arithmetic exponent pairs for algebraic trace functions and applications}
\author{Jie Wu}

\address{%
School of Mathematics and Statistics
\\
Qingdao University
\\
308 Ningxia Road
\\
Qingdao
\\
Shandong 266071
\\
China}
\curraddr{%
CNRS, UMR 8050\\
Laboratoire d'Analyse et de Math\'ematiques Appliqu\'ees\\
Universit\'e Paris-Est Cr\'eteil\\
61 Avenue du G\'en\'eral de Gaulle\\
94010 Cr\'eteil cedex\\
France
}
\email{jie.wu@math.cnrs.fr}

\author{Ping Xi \\ (With an appendix by Will Sawin)}

\address{School of Mathematics and Statistics, Xi'an Jiaotong University, Xi'an 710049, China}

\email{ping.xi@xjtu.edu.cn}

\address{ Department of Mathematics, Columbia University, 2990 Broadway, New York, New York 10027, USA}
\email{sawin@math.columbia.edu}

\subjclass[2010]{11T23, 11L05, 11L07, 11N13, 11N36, 11N37, 11M06}

\keywords{$q$-analogue of the van der Corput method, arithmetic exponent pairs, trace functions of $\ell$-adic sheaves, Brun--Titchmarsh theorem, linear sieve}

\begin{abstract} 
We study short sums of algebraic trace functions via the $q$-analogue of the van der Corput method, and develop theory of arithmetic exponent pairs that coincide with the classical case when the moduli have sufficiently good factorizations. As an application, we prove a quadratic analogue of the Brun--Titchmarsh theorem on average, bounding the number of primes $p\leqslant X$ such that
$p^2+1\equiv0\bmod{q}$. The other two applications include a larger level of distribution of divisor functions in arithmetic progressions and 
a sub-Weyl subconvex bound of Dirichlet $L$-functions studied previously by Irving.
\end{abstract}
\vglue -15mm
\maketitle

\setcounter{tocdepth}{1}

\vglue -12mm
\tableofcontents

\vglue -15mm
\section{Introduction}\label{sec:Introduction}

\subsection{Background}
Given a positive integer $q$ and $\Psi:\bZ/q\bZ\rightarrow\bC,$ a non-trivial bound for the average
\begin{align}\label{eq:Psi-average}
S(\Psi; I)= \sum_{n\in I}\Psi(n)
\end{align}
is highly desired in numerous problems in analytic number theory, 
where $I$ is a certain interval. 
The resolution of such a problem usually depends heavily on some tools from Fourier analysis. 
A typical example dates back to the classical estimate for incomplete character sums of P\'olya and Vinogradov, 
who (independently) applied a completing method (or equivalently a certain Fourier expansion) to transform the incomplete sum 
to complete ones and thus obtained non-trivial bounds as long as $|I|\ge q^{1/2+\varepsilon}$. 
An ingenious improvement was later realized by Burgess \cite{Bu62, Bu63}, who was able to work non-trivially 
with shorter sums and in particular, the first subconvexity can be derived for Dirichlet $L$-functions with the conductor aspect.
When $\Psi$ is specialized to some other examples such as additive characters and Kloosterman sums, 
one can also follow the approach of P\'olya and Vinogradov, 
and then succeed roughly in the range $|I|\ge q^{1/2+\varepsilon}$. 

The above $\tfrac{1}{2}$-barrier plays a crucial role in applications, and is highly desirable to be beaten in many instances. As an important example in history, we recall the pioneer work of Hooley \cite{Ho78} on greatest prime factors of cubic polynomials. 
Denote by $P^+(n)$ the greatest prime factor of $n$. To seek a positive constant $\eta$ such that $P^+(n^3+2)>n^{1+\eta}$
infinitely often, Hooley assumed, for some $\gamma>0,$ that
\begin{align}\label{eq:Hooleyconjecture}
\sum_{n\in I,~(n,q)=1} \mathrm{e}\Big(\frac{a\overline{n}}{q}\Big)\ll |I|q^{-\gamma}, \ \ \ (a,q)=1
\end{align}
holds for all intervals $I$ longer than $q^\theta$ for some $\theta<\tfrac{1}{3}$. However, the completing method of P\'olya and Vinogradov barely works for $\theta>\tfrac{1}{2}$. 
The existence of such a positive constant $\eta$ is nowadays known unconditionally due to the efforts of Heath-Brown \cite{HB01}. The approach of Heath-Brown is not devoted to proving a strong estimate such as \eqref{eq:Hooleyconjecture}, 
and instead he modified the Chebyshev--Hooley method so that 
some exponential sums with special features arise. In particular, he was able to allow the modulus $q$ to have suitable factorizations, and an estimate of the shape of \eqref{eq:Hooleyconjecture} can be obtained for such special $q$ by introducing the idea from classical estimates for analytic exponential sums, which is now usually known as {\it $q$-analogue of the van der Corput method}. In what follows we refer this to $q$-vdC for short.

In his breakthrough on bounded gaps between primes, 
Zhang \cite{Zh14} proved a level of distribution of primes in arithmetic progressions that is beyond $\tfrac{1}{2}$. 
A key feather is that he assumes the moduli have only small prime factors and thus allow suitable factorizations. 
He was able to go beyond the P\'olya--Vinogradov barrier in the resultant exponential sums with such special moduli, 
and the underlying idea can also be demonstrated by $q$-vdC.
There are many other examples that benefited a lot from $q$-vdC, and we will try to present a short list in later discussions. 

As in the above instances, one arrives at estimates for certain complete sums over $\bZ/q\bZ$ in the last step, and some tools from algebraic geometry enter the picture to guarantee square-root cancellations. On the other hand,
Fouvry, Kowalski and Michel initiated, from various analytic and geometric points of view, extensive investigations on general trace functions associated to some middle-extension $\ell$-adic sheaves on $\bA_{\bF_p}^1$ (see \cite{FKM14,FKM15,FKM+17} for instance). They are trying to establish a more direct and close relation between analytic number theory and algebraic geometry, where, in most cases, the second one serves as a powerful tool and provides fertile resources for the first, as one can see from the above examples.

\subsection{Plan of this paper}
In this paper, we study $q$-vdC for general trace functions, which are composite in the sense that they are defined by products of trace functions of suitable
$\ell$-adic sheaves on $\bA_{\bF_p}^1$ for a few primes $p$. Roughly speaking, we seek to bound \eqref{eq:Psi-average}
with $\Psi$ specialized to such composite trace functions, containing \eqref{eq:Hooleyconjecture} as a special case.
In fact, this project was initiated by Polymath \cite{Po14} in the improvement to Zhang's work on bounded gaps between primes.
Our observation here allows one to develop a method on arithmetic exponent pairs
analogous to those 
in the classical van der Corput method, from which one can find almost optimal estimates
for such averages as long as the moduli have sufficiently good factorizations.
On the other hand, one can also develop multiple exponent pairs that demonstrate
how the upper bounds depend on each factor of the modulus. We will start from an abstract exponent pair and then produce a series of exponent pairs after applying the $A$- and $B$-processes in $q$-vdC for suitably many times.

Three applications of $q$-vdC are also derived. On one hand, we prove a quadratic analogue of the Brun--Titchmarsh theorem on primes in arithmetic progressions, for which the linear Rosser--Iwaniec sieve plays a fundamental role and thanks to the contributions of Iwaniec \cite{Iw80}, we are able to take full advantage of the well factorizations of remainder terms.
On the other hand, we can, using our arithmetic exponent pairs, recover and improve a large level of Irving on the divisor functions in arithmetic progressions and a sub-Weyl subconvexity for Dirichlet $L$-functions.

The main part of this article and \cite{WX17} were completed in 2016,  and there have been many other related developments since then, including a larger level of ternary divisor function in arithmetic progressions that also requires a bilinear estimate for hyper-Kloosterman sums with smooth moduli (see  \cite{Xi18a}), a quadratic analogue of Titchmarsh divisor problem (see \cite{Xi18b}), a shifted convolution sum for $GL(3)\times GL(2)$ 
by combining arithmetic exponent pairs with Jutila's refinement of the circle method (see \cite{Xi18c}), and an improved lower bound for the number of fundamental solutions to Pell equations with prescribed sizes towards Hooley's conjecture (see  \cite{Xi18d}).
Moreover, some results and ideas are used by Dartyge and Martin \cite{DM19} to study distributions of exponential sums over roots of reducible polynomials.

Due to the special structure of this paper, we cannot state explicitly $q$-vdC and exponent pairs for algebraic trace functions in this section; however, to motivate the readers, we would like to present three applications mentioned as above.

\subsection{Quadratic Brun--Titchmarsh theorem on average} Our first application is devoted to counting primes in arithmetic progressions on average.
Let $q$ be a fixed positive integer and $(a,q)=1,$ we are interested in the counting function
\[\pi(x; q, a)=|\{p\leqslant x:p\equiv a\bmod q\}|.\]
Setting $q\asymp x^\theta$, one may expect,  as $x\rightarrow+\infty,$ that
\begin{align}\label{eq:BT}
\pi(x; q, a)<\{C(\theta)+o(1)\}\frac{1}{\varphi(q)}\frac{x}{\log x}\end{align}
holds for $\theta$ as large as possible with some $C(\theta)>0.$ 
This is called the {\it Brun--Titchmarsh theorem} since Titchmarsh is the first who proved the existence of such $C(\theta)$
 via Brun's sieve. 
By virtue of a careful application of Selberg's sieve, van Lint \& Richert \cite{LR65} showed that
$C(\theta)=2/(1-\theta)$ is admissible for $\theta\in~]0,1[$, uniformly in $(a,q)=1.$
This was later sharpened by Motohashi \cite{Mo74} for $\theta\in~]0, \tfrac{1}{2}]$. 
Iwaniec \cite{Iw82} introduced his description of the bilinear structure of remainder terms in linear sieves \cite{Iw80} 
to this problem, which allowed him to take $8/(6-7\theta)$ for $\theta\in~]\tfrac{2}{5}, \tfrac{2}{3}[$.
The progress becomes slower in this direction and the latest result going beyond $\frac{1}{2}$, to our best knowledge, 
is due to Friedlander \& Iwaniec \cite{FrI97}, 
who may take $C(\theta)=2/(1-\theta)-(1-\theta)^5/2^{12}$ for $\theta\in~]\tfrac{6}{11},1[.$

On the other hand, motivated by the problem on greatest prime factors of shifted primes, Hooley \cite{Ho72, Ho73, Ho75} initiated to bound $\pi(x; q, a)$ from above with an extra average over $q$. The subsequent improvement is due to Iwaniec \cite{Iw82},
who combined Hooley's argument with his bilinear remainder terms in linear sieves.
 It is a common treatment to transform sums over primes to those over integers via sieve methods, and then exponential sums will arise after Poisson summation. One then arrives at Kloosterman sums, so that Weil's bound does this job as argued by Hooley \cite{Ho72} and Iwaniec \cite{Iw82}.
Thanks to the work of Deshouillers \& Iwaniec \cite{DI82a} on the control of sums of Kloosterman sums, one can do much better on the level of linear sieves; see the works by
Deshouillers--Iwaniec \cite{DI84}, Fouvry \cite{Fo84, Fo85a}, Baker--Harman \cite{BH96}, {\it et al}.
However,
due to the use of the ``{\it switching-moduli}" trick, the residue class $a$ is usually assumed to be fixed.

We now extend the classical Brun--Titchmarsh theorems to the quadratic case. 
Let $f\in\bZ[X]$ be a fixed quadratic polynomial, and define
\[\pi_f(x; q) := |\{p\leqslant x: f(p)\equiv0\bmod q\}|.\]
Note that
\begin{align}\label{eq:pi_f}
\pi_f(x; q) = \sum_{\substack{a\bmod{q}\\ f(a)\equiv0\!\bmod{q}}} \pi(x; q, a).
\end{align}
In many situations, the number of solutions to $f(a)\equiv0\bmod{q}$ is usually quite small, say $O(q^{\varepsilon})$, at least when the leading coefficient of $f$ is coprime to $q$, in which case the estimation for $\pi_f(x; q)$ is thus reduced to the classical Brun--Titchmarsh theorem if $q$ is large but fixed. Therefore, our concern is to estimate $\pi_f(x; q)$ with an extra summation over $q$, for which the residue class is no longer fixed when $q$ varies,
and we would encounter quite a different problem from the classical situation.

Let $\ell\geqslant1.$ We restrict ourselves to the special case $f(t)=t^2+1$ and consider
\begin{align}\label{eq:Q_l(X)}
Q_{\ell}(X) := |\{p\leqslant X:p^2+1\equiv0\bmod\ell\}|.
\end{align}
We have the following {\it Quadratic Brun--Titchmarsh Theorem on Average}.
\begin{theorem}\label{thm:Brun-Titchmarsh}
Let $A>0$.
For large $L=X^\theta$ with $\theta\in[\frac{1}{2}, \frac{16}{17}[$, the inequality
\begin{align}\label{Q_l(X)bound}
Q_{\ell}(X)
\leqslant \bigg\{\frac{2}{\gamma(\theta)}+o(1)\bigg\}\frac{\varrho(\ell)}{\varphi(\ell)}\frac{X}{\log X}
\end{align}
holds for all $\ell\in[L,2L]$ with at most $O_A(L(\log L)^{-A})$ exceptions, where
\begin{align}\label{eq:gamma(theta)}
\gamma(\theta) := \begin{cases}
\frac{91-89\theta}{62}  & \text{if $\,\theta\in[\frac{1}{2},\frac{64}{97}[$},
\\\noalign{\vskip 0,5mm}
\frac{86-83\theta}{60}  & \text{if $\,\theta\in[\frac{64}{97},\frac{32}{41}[$},
\\\noalign{\vskip 0,5mm}
\frac{19-18\theta}{14}  & \text{if $\,\theta\in[\frac{32}{41},\frac{16}{17}[$}.
\end{cases}
\end{align}
\end{theorem}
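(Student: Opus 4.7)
The plan is to invoke the linear Rosser--Iwaniec sieve in its well-factorable form (Iwaniec \cite{Iw1}) to reduce the counting problem to a bilinear remainder, and then to attack the remainder using the arithmetic exponent pairs for trace functions developed earlier in the paper. Writing
$$Q_\ell(X)=\sum_{a^2+1\equiv0\bmod{\ell}}\sum_{p\equiv a\bmod{\ell}}g(p/X),$$
and applying the sieve with level $D$ and sifting parameter $z$, one obtains a main term proportional to $F(s)\,V(z)\,\rho(\ell)/\varphi(\ell)\cdot X$ where $F(s)=2e^{\gamma}/s$ is the Rosser-Iwaniec upper-bound function, $V(z)\sim e^{-\gamma}/\log z$ by Mertens, and $s\log z=\log D$. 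Since $F(s)V(z)=2/\log D+o(1/\log D)$, matching the constant $2/\gamma(\theta)$ in \eqref{Q_l(X)bound} reduces to establishing an effective level of distribution $D=X^{\gamma(\theta)}$ \emph{on average} over $\ell\in(L,2L]$, up to an acceptable exceptional set.

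Second, the well-factorable structure of the sieve weights lets one replace the remainders by bilinear forms
$$\sum_{\ell\sim L}\sum_{a^2+1\equiv0\bmod{\ell}}\Big|\sum_{m\sim M}\sum_{n\sim N}\alpha_m\beta_n\mathbf{1}_{mn\equiv a\bmod{\ell}}-\text{main}\Big|$$
with $MN\asymp X$ and $M,N$ in ranges governed by $D$. After Cauchy--Schwarz (removing $\beta_n$ at the expense of a square), a Poisson summation in the $n$-variable modulo $\ell$ produces incomplete sums of the shape
$$\sum_{m\sim M}\alpha_m\mathrm{e}\bigg(\frac{h\overline{m}a}{\ell}\bigg)$$
where $h$ runs with a smooth Fourier weight and $a=a(\ell)$ is a fixed square-root of $-1$ modulo $\ell$. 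The function $m\mapsto\mathrm{e}(h\overline{m}a/\ell)$ is a (pointwise pure) algebraic trace function on $\bZ/\ell\bZ$ built from the inverse sheaf twisted by the arithmetic of $\sqrt{-1}\bmod{\ell}$, so these short sums lie precisely within the scope of the arithmetic exponent pairs of the paper.

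Third, for all but $O_A(L(\log L)^{-A})$ moduli $\ell\in(L,2L]$ one can factor $\ell$ into pieces in prescribed dyadic windows so that enough $A$- and $B$-processes of the $q$-analogue of van der Corput method are available; the exponent-pair machinery then yields a power saving that, when inserted into the sieve, delivers the announced level $D=X^{\gamma(\theta)}$. The three branches of \eqref{eq:gamma(theta)} correspond to three different optimal exponent pairs: at smaller $\theta$ the modulus is smaller and one can afford more $A$-iterations, while at larger $\theta$ the iteration must stop earlier; the breakpoints $\theta=64/97$ and $\theta=32/41$ are the balancing thresholds between the competing pairs, and continuity of $\gamma(\theta)$ at these values can be checked directly.

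The main obstacle will be that the residue $a$ depends intrinsically on $\ell$ through $a^2+1\equiv0\bmod{\ell}$, which blocks the switching-moduli trick of Deshouillers--Iwaniec \cite{DI1,DI3,Fo1} used in the fixed-residue setting: one is forced to keep $\ell$ as the modulus and to estimate trace functions that genuinely encode $\sqrt{-1}\bmod{\ell}$. The argument therefore succeeds only when $\ell$ admits a factorization of the required shape, and a delicate combinatorial count of moduli without such factorization will be needed to bound the exceptional set by $O_A(L(\log L)^{-A})$. A further technical point is to keep the sieve coefficients $\alpha_m,\beta_n$ divisor-bounded throughout so that no logarithm is lost at the Cauchy--Schwarz step and the constant $2/\gamma(\theta)$ emerges cleanly.
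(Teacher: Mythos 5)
Your overall skeleton (Rosser--Iwaniec sieve with well-factorable weights, Poisson summation, then arithmetic exponent pairs) matches the paper's strategy, but your third step contains a fatal gap: you propose to run the $q$-analogue of van der Corput method with $\ell$ itself as the modulus, and to place in the exceptional set those $\ell\in(L,2L]$ that do not factor into pieces of prescribed sizes. This cannot give the stated conclusion. A generic integer $\ell\sim L$ does \emph{not} admit a divisor in a prescribed dyadic window $[Q,2Q]$ with $Q=L^{c}$: the set of such $\ell$ has density strictly less than $1$ (a positive proportion of $\ell$ are prime, or have all prime factors too large), so the exceptional set in your argument is a positive proportion of $(L,2L]$, not $O_A(L(\log L)^{-A})$. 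Relatedly, you misidentify the source of the exceptional set: in the paper it arises purely from a Markov-type inequality applied to the averaged remainder bound $\sum_{\ell\sim L}|\sum_d\lambda(d)r(X,d;\ell)|\ll X^{1-\delta}$ of Proposition \ref{prop:Q_l(X)bound-average}, not from any factorization property of $\ell$.

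The missing idea is the mechanism that removes $\ell$ as the modulus altogether. After Poisson summation the remainder involves the Weyl sums $\rho_{\overline{d}h}(\ell)$ of \eqref{eq:rho(n,l)}; the paper then invokes Gau{\ss}'s parametrization of the roots of $a^2+1\equiv0\bmod\ell$ by proper representations $\ell=r^2+s^2$ together with the reciprocity law (Lemma \ref{lm:Gausslemma}), which rewrites $\overline{d}a/\ell$ as a fraction whose denominator is essentially $ds$ (plus a smooth perturbation). The sum over $\ell\sim L$ becomes a sum over lattice points $(r,s)$, and the van der Corput/exponent-pair machinery is applied to the sum over $r$ with modulus built from the \emph{sieve divisor} $d=mn$ times $s$. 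It is the well-factorability of $d$, guaranteed by Iwaniec's construction of the weights $\lambda^{(\nu)}$, that supplies the factorizations needed for the $A$- and $B$-processes --- no factorization of $\ell$ is ever required. Your own ``main obstacle'' paragraph correctly identifies that $a$ depends on $\ell$ and that switching moduli is blocked, but your proposed resolution (factor $\ell$) is not viable; the Gau{\ss}--reciprocity step is precisely how the paper circumvents it.
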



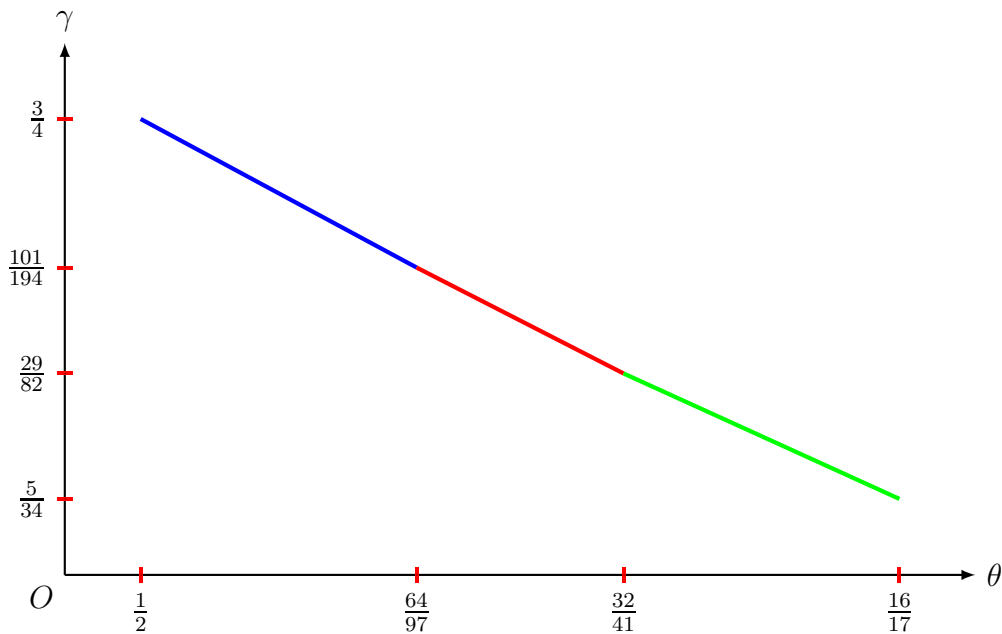
\begin{figure}[h]
  \centering
  \begin{tikzpicture}

    \clip (-1,-1) rectangle (12.5cm, 7.5cm); 

    \coordinate (Origin) at (4,0);
    \coordinate (XAxisMin) at (0,0);
    \coordinate (XAxisMax) at (12,0);
    \coordinate (YAxisMin) at (0,0);
    \coordinate (YAxisMax) at (0,7);

    \coordinate (X1) at (1.00,0);
    \coordinate (X2) at (4.636,0);
    \coordinate (X3) at (7.364,0);
    \coordinate (X4) at (11.00,0);

    \coordinate (Y1) at (0, 6);
    \coordinate (Y2) at (0, 4.045);
    \coordinate (Y3) at (0, 2.651);
    \coordinate (Y4) at (0, 1.00);

    \coordinate (P1) at (1, 6);
    \coordinate (P2) at (4.636, 4.045);
    \coordinate (P3) at (7.364, 2.651);
    \coordinate (P4) at (11.00, 1.00);

    \draw [thick,-latex] (XAxisMin) -- (XAxisMax) node [right]{\(\theta\)};
    \draw [thick,-latex] (YAxisMin) -- (YAxisMax) node [above]{\(\gamma\)};

  \node[inner sep=0mm, minimum height=0.4mm, minimum width=2mm, red,fill] at (Y1) {};
  \node[inner sep=0mm, minimum height=0.4mm, minimum width=2mm, red,fill] at (Y2) {};
  \node[inner sep=0mm, minimum height=0.4mm, minimum width=2mm, red,fill] at (Y3) {};
  \node[inner sep=0mm, minimum height=0.4mm, minimum width=2mm, red,fill] at (Y4) {};
 
  \node[inner sep=0mm, minimum height=2mm, minimum width=0.4mm, red,fill] at (X1) {};
  \node[inner sep=0mm, minimum height=2mm, minimum width=0.4mm, red,fill] at (X2) {};
  \node[inner sep=0mm, minimum height=2mm, minimum width=0.4mm, red,fill] at (X3) {};
  \node[inner sep=0mm, minimum height=2mm, minimum width=0.4mm, red,fill] at (X4) {};

  \draw (0,0) -- (0,0) node [below left]{\(O\)};
  \draw (X1) -- (X1) node [below=1mm]{\(\frac{1}{2}\)};
  \draw (X2) -- (X2) node [below=1mm]{\(\frac{64}{97}\)};
  \draw (X3) -- (X3) node [below=1mm]{\(\frac{32}{41}\)};
  \draw (X4) -- (X4) node [below=1mm]{\(\frac{16}{17}\)};

  \draw (Y1) -- (Y1) node [left=1mm]{\small \(\tfrac{3}{4}\)};
  \draw (Y2) -- (Y2) node [left=1mm]{\small \(\tfrac{101}{194}\)};
  \draw (Y3) -- (Y3) node [left=1mm]{\small \(\tfrac{29}{82}\)};
  \draw (Y4) -- (Y4) node [left=1mm]{\small \(\tfrac{5}{34}\)};
  
   \draw [ultra thick, blue] (P1) -- (P2) ;
   \draw [ultra thick, red] (P2) -- (P3) ;
   \draw [ultra thick, green] (P3) -- (P4) ;

  
\end{tikzpicture}
\caption{Graph of \(\gamma(\theta)\) as a function of \(\theta\).}
\label{pic_unmoinseta}
\end{figure}



The proof of Theorem \ref{thm:Brun-Titchmarsh} can be summarized as follows. 
The linear sieve of Rosser--Iwaniec applies to the prime variable in $Q_{\ell}(X),$
and a routine application of Fourier analysis will lead us to the Weyl sum
\begin{align}\label{eq:rho(n,l)}
\varrho_n(\ell) 
:= \sum_{\substack{a\bmod{\ell}\\a^2+1\equiv0\!\bmod{\ell}}} \mathrm{e}\Big(\frac{an}{\ell}\Big).
\end{align}
A trivial bound reads $|\varrho_n(\ell)|\leqslant \varrho_0(\ell) = \varrho(\ell)\ll \ell^\varepsilon$ for any $\varepsilon>0,$ 
which means one cannot expect any power-savings if each $\varrho_n(\ell)$ is taken into account individually. Fortunately, we may follow the approaches of Hooley \cite{Ho67} and Deshouillers--Iwaniec \cite{DI82b}, transforming $\varrho_n(\ell)$ to another exponential sums by appealing to Gau{\ss}'s theory of representation of numbers by binary quadratic forms (see Lemma \ref{lm:Gausslemma} below), and a considerable cancellation is possible when summing over $\ell$.

The above-mentioned exponential sums in \cite{Ho67,DI82b} are both Kloosterman sums, and cancellations among such sums can be controlled by virtue of the spectral theory of automorphic forms (see \cite{DI82a}). In our current situation, due to the application of linear sieves before Fourier analysis, we will be led to some algebraic exponential sums that are not perfectly Kloosterman sums, so that we have to go back to the original approach of Hooley \cite{Ho72}. However, we may invoke the work of Iwaniec \cite{Iw80} on the well-factorable remainder terms in linear sieves. In such way, the moduli of the resultant exponential sums allow \textit{suitable factorizations}. We can thus employ $q$-vdC to capture cancellations although the sums are quite short, the underlying ideas of which are the key observations in our arguments. 

Theorem \ref{thm:Brun-Titchmarsh} is in fact motivated by some arithmetic problems 
concerning quadratic polynomials at prime arguments. 
In another joint work \cite{WX17}, we consider the greatest prime factors and almost prime values of $p^2+1$, 
as approximations to the conjecture that any given quadratic irreducible polynomial can capture infinitely many prime values at prime arguments, provided that there are no obvious obstructions.
One will see that our methods allow us to improve significantly corresponding results in literature.

Before closing this subsection, we would like to mention that Theorem \ref{thm:Brun-Titchmarsh} can be extended to general quadratic irreducible polynomials of fixed discriminants.

\subsection{Divisor functions in arithmetic progressions}
For $(a,q)=1$, define
\begin{align*}
D(X; q, a) := \sum_{\substack{n\leqslant X\\n\equiv a\!\bmod{q}}} \tau(n).
\end{align*}
Put $q\asymp X^\theta.$ It is important to evaluate $D(X; q, a)$ asymptotically
with $\theta$ as large as possible. As a direct application of Weil's bound for Kloosterman sums (together with Fourier analysis), one has, as $X\rightarrow+\infty$,
\begin{align}\label{eq:tauAPs-asymptotic}
D(X; q, a)
= \{1+o(1)\}\frac{1}{\varphi(q)}\sum_{\substack{n\leqslant X\\(n,q)=1}}\tau(n)\end{align}
uniformly in $a$ for any $\theta<\tfrac{2}{3}$. 
This was independently obtained by Selberg and Hooley, and is still the best known record for an arbitrary modulus $q$. 
It is reasonable to expect that \eqref{eq:tauAPs-asymptotic} should hold for all $\theta<1.$

For almost all $q\asymp X^\theta,$ Fouvry \cite{Fo85b} succeeded in the case $\tfrac{2}{3}<\theta<1$, but he has to fix the residue class $a$. 
The gap around $\theta\approx \tfrac{2}{3}$ was covered by Fouvry and Iwaniec \cite{FoI92} 
for almost all $q$ satisfying certain factorizations.

Quite recently, Irving \cite{Ir15} picked up this problem when $q$ has sufficiently good factorizations, 
so that $q$-vdC applies. His main theorem could be formulated as follows. 
If $q\asymp X^\theta$ is squarefree and has only prime factors not exceeding $X^\eta,$
then \eqref{eq:tauAPs-asymptotic} holds for all $\theta<\tfrac{2}{3}+\varpi$ with $246\varpi+18\eta<1.$ 
In particular, if $\eta$ is sufficiently small, i.e., $q$ is smooth enough,
he can take \[\theta=\frac{2}{3}+\frac{1}{246}+O(\eta).\]

The arithmetic exponent pairs in Section \ref{sec:q-vdC} 
allow us to obtain a slightly larger admissible value of $\theta.$ Note that Irving used the exponent pair $BA^3(\frac{1}{2},\frac{1}{2})=(\frac{11}{30},\frac{16}{30})$.

\begin{theorem}\label{thm:tauAPs}
Suppose that $q\asymp X^\theta$ is squarefree and has only prime factors not exceeding $q^\eta$ 
with $\eta>0$ sufficiently small. 
Then \eqref{eq:tauAPs-asymptotic} holds for all $\theta$ with
\[\theta\le \frac{2}{3}+\frac{1}{232}\]
uniformly in $(a,q)=1$.
\end{theorem}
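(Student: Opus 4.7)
I would follow the overall framework of Irving, replacing his use of the exponent pair $BA^{3}(\tfrac12,\tfrac12)=(\tfrac{11}{30},\tfrac{16}{30})$ by the sharper arithmetic exponent pair delivered by Section~\ref{sec:q-vdC}. The entire improvement from $\tfrac{1}{246}$ to $\tfrac{1}{232}$ should funnel through this one substitution, as every other step is already standard.

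First, opening $\tau(n)$ by the Dirichlet hyperbola method reduces $D(X;q,a)$ to dyadic sums of the shape
\[
\mathcal{M}(D) := \sum_{\substack{d\sim D\\ (d,q)=1}}\#\bigl\{e\leq X/d:\ e\equiv a\overline d\bmod{q}\bigr\},\qquad D\in[1,X^{1/2}],
\]
plus the symmetric contribution from the opposite ordering and a negligible diagonal. Applying Poisson summation in $e$ modulo $q$ and truncating the dual variable at $H\asymp qDX^{-1+\varepsilon}$ extracts the expected main term $(X/d)/q$ and expresses the discrepancy as
\[
\mathcal{M}(D)-\text{(main term)}\ \ll\ (\log X)^{O(1)}\sup_{1\leq|h|\leq H}\bigg|\sum_{d\sim D}\mathrm{e}\Big(\frac{ah\overline d}{q}\Big)\bigg|+X^{-100}.
\]
The inner sum is a short sum of the trace function $d\mapsto\mathrm{e}(ah\overline d/q)$, a composite additive-character-of-inverse sheaf on $\bA^{1}_{\bF_{p}}$ for each $p\mid q$, and is exactly the object to which the arithmetic exponent pair machinery of Section~\ref{sec:q-vdC} applies.

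Because $q$ is squarefree with all prime factors $\leq q^{\eta}$, arbitrary factorizations $q=q_{1}q_{2}\cdots q_{r}$ of prescribed sizes are available at an additive cost of $O(\eta)$ in every exponent. I would apply the exponent-pair theorem from Section~\ref{sec:q-vdC} to the base pair $(\kappa_{0},\lambda_{0})=(\tfrac12,\tfrac12)$, iterating the $A$- and $B$-processes to the appropriate depth, and obtain
\[
\sum_{d\sim D}\mathrm{e}\Big(\frac{ah\overline d}{q}\Big)\ \ll\ q^{\kappa+O(\eta)}\,D^{\lambda-\kappa+O(\eta)}
\]
uniformly in $h$ (with $(h,q)$ controlled), for an exponent pair $(\kappa,\lambda)$ sharper than $(\tfrac{11}{30},\tfrac{16}{30})$ at the critical $D\asymp X^{1/2}$. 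Summing dyadically and comparing with the main term produces the balancing condition
\[
\theta\ \leq\ \frac{2-(\lambda-\kappa)}{2(\kappa+1)}+O(\eta).
\]
Irving's pair gives $\tfrac{55}{82}=\tfrac{2}{3}+\tfrac{1}{246}$; the paper's more flexible pair gives $\tfrac{2}{3}+\tfrac{1}{232}+O(\eta)$, which is the claim once $\eta$ is taken sufficiently small in terms of the margin.

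The hyperbola method, the Poisson unfolding, and the final balancing are routine. The real content lies in Section~\ref{sec:q-vdC}: one must verify that each $A$-process produces a composite trace function of bounded conductor so that Deligne's theorem delivers genuine square-root cancellation at the completion step, and that the $q^{\eta}$ loss per iteration stays additive in the exponent rather than multiplicative in $q$. One must also check that the base hypotheses (geometric irreducibility, absence of parasitic Swan conductors, uniformity in $h$) apply to the sheaf $d\mapsto\mathrm{e}(ah\overline{d}/q)$. Given that input, the theorem translates into the claimed level of distribution without further surprises.
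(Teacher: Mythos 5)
Your plan follows the paper's route exactly: reduce via the hyperbola method and Poisson summation to the incomplete sum $\sum_{n\sim N}\mathrm{e}(h\overline{n}/q)$ at the critical length $N\asymp X^{1/2}$, and then invoke an arithmetic exponent pair. Your balancing condition $\theta\le\frac{2-(\lambda-\kappa)}{2(\kappa+1)}$ is algebraically identical to the paper's $\frac{2}{3}+\frac{2-\kappa-3\lambda}{6(\kappa+1)}$, and your check that $(\frac{11}{30},\frac{16}{30})$ recovers Irving's $\frac{55}{82}=\frac{2}{3}+\frac{1}{246}$ is correct. The trace function $d\mapsto\mathrm{e}(ah\overline{d}/p)$ is of the form $\psi(f_1\overline{f_2})$ with $\deg f_1<\deg f_2=1$, so it is $\infty$-amiable in the paper's sense and the smoothness of $q$ supplies the free factorizations; those verifications you flag are indeed available.

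The one genuine gap is that you never exhibit the exponent pair that achieves $\frac{1}{232}$; you simply assert that "the paper's more flexible pair" does better than $BA^3(\frac12,\frac12)$. Since the entire content of the theorem beyond Irving's result is this optimization, the proof is incomplete without it. The paper resolves it by running the Graham--Kolesnik exponent-pair algorithm over words in $A$ and $B$ and selecting
\[
(\kappa,\lambda)=BA^3BA^2BABABA^2\bigl(\tfrac12,\tfrac12\bigr)=\bigl(\tfrac{591}{1535},\tfrac{808}{1535}\bigr),
\]
for which $\frac{2-\kappa-3\lambda}{6(\kappa+1)}=\frac{55}{12756}>\frac{1}{232}$. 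You would need either to produce this pair (or another with $\frac{2-(\lambda-\kappa)}{2(\kappa+1)}\ge\frac{2}{3}+\frac{1}{232}$) explicitly, or to cite the optimization over all admissible words; note that the optimum of $\kappa+\lambda$ quoted elsewhere in the paper ($\approx 0.829$, relevant for Theorem 1.3) is not the quantity being minimized here, so the correct pair is not obvious without the search.
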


\subsection{Subconvexity of Dirichlet $L$-functions to smooth moduli}
Given a positive integer $q$ and a primitive character $\chi\bmod{q},$ we are interested in obtaining subconvex bounds for Dirichlet $L$-functions, i.e., we expect, for some $\delta>0,$ that
\begin{align}\label{eq:L-subconvexity}
L(\tfrac{1}{2}, \chi)\ll q^{\frac{1}{4}-\delta}.\end{align}

Burgess \cite{Bu63} proved that any $\delta<\frac{1}{16}$ is admissible in \eqref{eq:L-subconvexity}, 
and a Weyl bound asserts that any $\delta<\frac{1}{12}$ should be accessible, 
as a $q$-analogue of subconvexity of the Riemann zeta function. 
The Weyl bound was already achieved in a few cases: Heath-Brown \cite{HB78} succeeded under the assumption that $q$ admits suitable factorizations; 
Conrey and Iwaniec \cite{CI00} solved the case that $q$ is a prime and $\chi$ is quadratic by quite a different method, which was recently refined and generalized by Young \cite{Yo17} and Petrow and Young
\cite{PY20}.

Heath-Brown's argument relies on the ideas of $q$-vdC as we have mentioned above, and this was developed further by Irving \cite{Ir16}, going beyond Weyl's barrier if $q$ has sufficiently good factorizations. In particular, if $q$ is squarefree and has only prime factors not exceeding $q^\eta$ for some small $\eta>0$, then one can take
\begin{align*}
\delta=\frac{7}{82}+O(\eta)
\end{align*}
in \eqref{eq:L-subconvexity}. 
As $\eta$ becomes sufficiently small, he goes beyond the Weyl bound since $\tfrac{1}{12}=\frac{7}{84}<\tfrac{7}{82}$. 
This coincides with the bound for $\zeta(\frac{1}{2}+\mathrm{i}t)$ in the $t$-aspect, 
derived from the classical exponent pair $(\frac{11}{82},\frac{57}{88})$.

The above assumption of Irving on $q$ just falls into the application of arithmetic exponent pairs developed in this paper. More precisely, we obtain the following improvement.

\begin{theorem}\label{thm:subconvexity}
Suppose that $q$ is squarefree and has only prime factors not exceeding $q^\eta$ with $\eta>0$ sufficiently small. 
Then, for any
primitive Dirichlet character $\chi\bmod{q}$, we have
\begin{align*}
L(\tfrac{1}{2}, \chi)\ll  q^{\frac{1}{4}-0.085489}.\end{align*}
\end{theorem}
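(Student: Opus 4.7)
The plan is to follow the framework of Heath-Brown \cite{HB1} and Irving \cite{Ir2} for subconvexity of $L(\tfrac{1}{2},\chi)$ with smooth squarefree $q$, and replace the key exponential-sum input with a better arithmetic exponent pair drawn from Section~\ref{sec:q-vdC}.

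I would start from the approximate functional equation, which reduces the problem to uniform bounds for $S(N):=\sum_{n\sim N}\chi(n)$ with $N\ll q^{1/2+\varepsilon}$. The $q^{\eta}$-smoothness hypothesis then allows a factorization $q=rs$ of prescribed shape: a Burgess-type shift $n\mapsto n+ah$ modulo $r$ combined with H\"older's inequality converts $|S(N)|^{2k}$ into a sum of complete character sums modulo $q$. Splitting $\chi$ via the Chinese remainder theorem as $\chi=\chi_r\chi_s$ and applying Poisson summation in one CRT component leaves a short incomplete sum modulo the remaining factor whose summand is an algebraic trace function---essentially a product of shifted Kloosterman-type sums twisted by a Dirichlet character---exactly of the sort covered by the abstract framework of Section~\ref{sec:q-vdC}.

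I would then feed this trace-function sum into the arithmetic exponent pair machine. Irving took $BA^{3}(\tfrac{1}{2},\tfrac{1}{2})=(\tfrac{11}{30},\tfrac{16}{30})$ and obtained $\delta=\tfrac{7}{82}+O(\eta)$; with the broader supply of admissible pairs $BA^{i_1}BA^{i_2}\cdots(\tfrac{1}{2},\tfrac{1}{2})$, and more generally arbitrary words in $A,B$, produced by iterating the $q$-vdC processes---all admissible because the $q^{\eta}$-smoothness supplies arbitrarily many coprime factors of $q$---one obtains a bound of the form
\[
L(\tfrac{1}{2},\chi)\ll q^{1/4-\delta(k,\ell)+O(\eta)},
\]
where $\delta(k,\ell)$ is an explicit rational function of the exponent pair, depending also on the Burgess length $N$ and on the shape of the factorization $q=rs$.

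Finally, a numerical optimization over the admissible pairs and over the auxiliary Burgess parameters should yield the stated constant $0.085489$, slightly exceeding $7/82=0.085366\ldots$. The main obstacle is not analytic but algebro-geometric: each application of the $A$- or $B$-process requires the $\ell$-adic sheaf attached to the current trace function to remain geometrically irreducible (or at least isotypic) and of bounded conductor after the Fourier transform and the additive/multiplicative shifts involved, so that Deligne's square-root cancellation is preserved at every iterate. For the composite sheaf arising from the Heath-Brown reduction, this amounts to standard non-isomorphism checks for (hyper-)Kloosterman sheaves under twists by suitable additive and multiplicative characters; these checks are packaged into the abstract hypotheses of Section~\ref{sec:q-vdC}, and most of the verification effort goes into confirming that the specific sheaf produced in our reduction satisfies them uniformly in the iteration depth.
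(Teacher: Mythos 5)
Your high-level instinct --- that the improvement over Irving's $7/82$ comes from feeding a better arithmetic exponent pair into the subconvexity machine and optimizing --- is right, but the reduction you describe is not the one needed, and the step that actually produces the constant $0.085489$ is missing. The paper's argument is far more direct: the approximate functional equation reduces matters to $\sum_{M<n\leqslant M+N}\chi(n)$ with $N\ll q^{1/2+\varepsilon}$, and one applies an arithmetic exponent pair \emph{to the character sum itself}, since $\chi$ is already an ($\infty$-amiable, rank one) composite trace function modulo the squarefree $q$ and the $q^{\eta}$-smoothness supplies the free factorizations required by Theorem \ref{thm:exponentpair-A-optimization}. This gives $\sum_{M<n\leqslant M+N}\chi(n)\ll q^{\kappa}N^{\lambda-\kappa+O(\eta)}$ and hence $L(\tfrac12,\chi)\ll q^{(\kappa+\lambda)/2-\tfrac14+O(\eta)}$, i.e.\ a saving $\delta=\tfrac12-\tfrac{\kappa+\lambda}{2}+O(\eta)$. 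There is no Burgess shift, no H\"older amplification of a $2k$-th moment, no CRT splitting producing Kloosterman-type sheaves; introducing that machinery changes the shape of the final bound, so your claim that ``numerical optimization over the admissible pairs and over the auxiliary Burgess parameters should yield the stated constant'' is not something you can take for granted --- under a genuine Burgess amplification the exponent would not be $(\kappa+\lambda)/2-\tfrac14$.

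The concrete gap is therefore twofold. First, you never derive the explicit dependence of the final exponent on $(\kappa,\lambda)$; without the identity $\delta=\tfrac12-\tfrac{\kappa+\lambda}{2}$ the optimization problem is not even specified. Second, once that identity is in hand, the ``numerical optimization'' is a known classical fact: the infimum of $\kappa+\lambda$ over the semigroup of exponent pairs generated by the $A$- and $B$-processes is Rankin's constant $0.829021\ldots$ (see \cite[Section 5.4]{GK}), which gives $\delta=\tfrac12(1-0.829021\ldots)=0.0854895\ldots>\tfrac{7}{82}=0.085365\ldots$, exactly the exponent in the statement. (Irving's $7/82$ is the same formula evaluated at the single pair $(\tfrac{11}{82},\tfrac{57}{82})$.) Your closing paragraph about verifying geometric irreducibility of Kloosterman-type sheaves at every iterate addresses a difficulty that does not arise here: the only amiability check needed is for $\chi$ and its differences/Fourier transforms, which Section \ref{sec:tracefunction} already covers.
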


Note that $\tfrac{7}{82}\approx 0.085365.$ The improvement is rather slight, however the proof is clear as an immediate consequence of arithmetic exponent pairs.

This paper is organized as follows: the terminology of trace functions will be introduced in Section \ref{sec:tracefunction}, and in Section \ref{sec:q-vdC}
we develop the method of arithmetic exponent pairs. Theorems \ref{thm:Brun-Titchmarsh}, \ref{thm:tauAPs} and \ref{thm:subconvexity} will be proved in Sections \ref{sec:quadraticBT} and \ref{sec:tauAPs-subconvexity}. 
The Mathematica codes can be found at \url{http://gr.xjtu.edu.cn/web/ping.xi/miscellanea} or requested from the authors.

\subsection*{Notation and convention} 
As usual, $\tau$, $\varphi$ and $\Lambda$ denote the divisor, Euler and von Mangoldt functions, respectively. 
The variable $p$ is reserved for prime numbers. For a real number $x,$ denote by $[x]$ its integral part. Denote by $\varrho(\ell)$ the number of solutions to the congruence equation $n^2+1\equiv0 \bmod{\ell}.$  For a given positive integer $n$, we adopt the convention $n^\infty$ to mean a sufficiently large power of $n$, so that 
\[(m,n^\infty)=\prod_{p^\nu\| m,~p\mid n}p^\nu.\]
Given a positive integer $n,$ we introduce the new arithmetic function
\begin{align}\label{eq:Xi}
\Xi(n) := \prod_{p^\nu\|n, \, \nu\geqslant3} p^\nu.
\end{align}

For a function $f$ defined over $\bZ/q\bZ,$ the Fourier transform is defined by
\begin{align*}
\widehat{f}(y) := \frac{1}{\sqrt{q}}\sum_{a\in\bZ/q\bZ}f(a)\mathrm{e}\Big(\frac{-ya}{q}\Big)
\end{align*}
where $\mathrm{e}(t):=\mathrm{e}^{2\pi\mathrm{i}t}$.
For each $h\in\bZ$ and all $x\in\bZ/q\bZ$, define the difference
\begin{equation}\label{def:Deltahf}
\Delta_h(f)(x) := f(x)\overline{f(x+h)}.
\end{equation}
For a function $g\in L^1(\bR)$, its Fourier transform is defined by
\begin{align*}
\widehat{g}(y) := \int_\bR g(x) \mathrm{e}(-yx)\ud x.
\end{align*}

The multiplicative inverse $\overline{x}$ of $x$ should be defined with respect to some specialized modulus $c$; i.e., $\overline{x}x\equiv1\bmod{c}.$ Moreover, when $\overline{x}$
 appears in fractions, the modulus will be referred implicitly to the denominator, which is assumed to be coprime to $x$ as can be checked on each occasion.

We use $\varepsilon$ to denote a small positive number, which might be different at each occurrence; we also write $X^\varepsilon \log X\ll X^\varepsilon.$ The notation $n\sim N$ means $N<n\leqslant2N.$  

\subsection*{Acknowledgements} 
The authors are grateful to \'Etienne Fouvry and Philippe Michel for their kind suggestions and 
to C\'ecile Dartyge for pointing out an error in an earlier version of Lemma \ref{lm:Gausslemma}. Sincere thanks are also due to Will Sawin for his appendix which makes the method developed in this paper more applicable in analytic number theory.
Both of Jie Wu and Ping Xi are supported in part by NSFC (No. 11971370) and PRC NSFC-CNRS (No. 11911530227/202175).
Jie Wu is supported in part by NSFC (No. 12071375) and Ping Xi is supported in part by NSFC (No. 11601413) and The Young Talent Support Plan in Xi'an Jiaotong University. 

\smallskip

\section{Basics on algebraic trace functions}\label{sec:tracefunction}

This section is devoted to the terminology on trace functions of $\ell$-adic sheaves on $\bA_{\bF_p}^1$ 
following the manner of Fouvry, Kowalski and Michel \cite{FKM14, FKM15}, 
and $\ell$-adic Fourier transforms will also be discussed after Laumon \cite{La87} and Katz \cite{Ka90}.

\subsection{Trace functions}
Let $p$ be a prime and $\ell\neq  p$ an auxiliary prime, and fix an isomorphism $\iota : \overline{\bQ}_\ell\rightarrow\bC$. The functions $K(x)$ modulo $p$ that we
consider are the trace functions of suitable constructible sheaves on $\bA^1_{\bF_p}$
evaluated at $x\in\bF_p$. To
be precise, we will consider middle-extension sheaves on $\bP^1_{\bF_p}$ 
and we refer to the following definition after Katz \cite[Section 7.3.7]{Ka88}.

\begin{definition}[Trace functions]\label{def:tracefunction}
Let $\cF$ be an $\ell$-adic middle-extension sheaf pure of weight zero, 
which is lisse on an open set $U$. The trace function associated to $\cF$ is defined by
\begin{align*}
K:x\in\bF_p\mapsto\iota(\tr(\Frob_x\mid V_\cF)),
\end{align*}
where $\Frob_x$ denotes the geometric Frobenius at $x\in\bF_p,$ and $V_\cF$ is a finite dimensional $\overline{\bQ}_\ell$-vector space, which is corresponding to a continuous finite-dimensional Galois representation and unramified at every closed point $x$ of $U.$
\end{definition}

We need an invariant to measure the geometric complexity of a trace function, which can be given by some numerical invariants of the underlying sheaf.
\begin{definition}[Conductor] \label{def:conductor} 
For an $\ell$-adic middle-extension sheaf $\cF$ on $\bP^1_{\bF_p}$ of rank $\rank(\cF)$,
we define the $($analytic$)$ conductor of $\cF$ to be
\begin{align*}  
\fc(\cF) := \rank(\cF) + \sum_{x\in S(\cF)} (1+\Swan_x(\cF)),
\end{align*}
where $S(\cF)\subset\bP^1(\overline{\bF}_p)$ denotes the $($finite$)$ set of singularities of $\cF$, 
and $\Swan_x(\cF)$ $(\geqslant 0)$ denotes the Swan conductor of $\cF$ at $x$ $($see {\rm \cite{Ka80}}$).$
\end{definition}

We never lack for practical examples of trace functions in modern analytic number theory. For instance,
\begin{itemize}
\item Let $f\in\bF_p(X)$ be a rational function, and $\psi$ a primitive additive character on $\bF_p$, then $\psi(f(x))$ is a trace function of an $\ell$-adic middle-extension sheaf, which is taken to be zero when meeting a pole of $f$ at $x$. More precisely, one can show that there exists an $\ell$-adic middle-extension sheaf modulo $p$, denoted by
$\cL_{\psi(f)},$ such that $x\mapsto\psi(f(x))$ is the trace function of $\cL_{\psi(f)}.$ The conductor can be bounded in terms of the degree of $f$, independent of $p$.

\item  
Let $f\in\bF_p(X)$ be a rational function, and $\chi$ a multiplicative character of order $d >1$. 
If $f$ has no pole or zero of order divisible by $d$, 
then one can show that there exists an $\ell$-adic middle-extension
sheaf, denoted by $\cL_{\chi(f)}$, such that $x\mapsto\chi(f(x))$ is the trace function of $\cL_{\chi(f)}$. 
The conductor can also be bounded in terms of the degree of $f$, independent of $p$.

\item 
Another example is the following {\it normalized} hyper-Kloosterman sum 
defined, for any fixed positive $k,$ by
\begin{align*}
\kl_k(\cdot,p) : x\mapsto p^{-(k-1)/2}
\mathop{\sum\cdots\sum}_{\substack{x_1, \, \dots, \, x_k\in\bF_p\\ x_1\cdots x_k=x}}
\mathrm{e}\Big(\frac{x_1+\cdots+x_k}{p}\Big).
\end{align*}
Note that $\kl_k(0,p)=(-1)^{k-1}p^{-(k-1)/2}.$
In particular, we have $\kl_1(x,p)=\mathrm{e}(x/p)$, and
$\kl_2(x,p)$ normalizes the classical Kloosterman sum at the invertible point $x\in\bF_p^\times$. According to Deligne, there exists an $\ell$-adic middle-extension sheaf $\cK\ell_k$, called a Kloosterman sheaf, such that
\begin{align*}K_{\cK\ell_k}(x)=\kl_k(x,p)
\quad
\text{for all}\ \ x\in\bF_p^\times.
\end{align*}
Such a sheaf was constructed by Deligne \cite{De80}, and extensively studied by Katz \cite{Ka80, Ka88}. 
Again according to Deligne, $\cK\ell_k$ is geometrically irreducible and is of rank $k$,  the conductor of which is bounded by $k+3$.
\end{itemize}

Let $q$ be a squarefree number. 
What will be concerned with is a {\it composite} trace function $K$ modulo $q$, given by the product 
\[K(n)=\prod_{p|q}K_p(n),\]
where $K_p$ is a trace function associated to some $\ell$-adic middle-extension sheaf
on $\bP_{\bF_p}^1$.
We adopt the convention that $K(n)=1$ for all $n$ if $q=1.$
In practice, the value of $K_p(n)$ may depend on the complementary divisor $q/p.$ 
Many definitions involving $K_p$ and $\cF_p$ can be formally moved to $K$ with $q$ composite; see Definitions \ref{def:admissiblesheaf} and \ref{def:amiablesheaf} for instance.

In the study of trace functions, especially on their analytic properties, one usually needs to control the conductors independently of $p$, as in the above examples. On the other hand, the following Riemann Hypothesis, proved by Deligne \cite{De80},  plays an essential role in the practical device, demonstrating the quasi-orthogonality of trace functions of $\ell$-adic sheaves (see \cite[Corollary 6.6]{Po14} for instance).

\begin{proposition}[Quasi-Orthogonality]\label{prop:RH}
Suppose $\cF_1,\cF_2$ are two admissible $($in the sense of Definition $\ref{def:admissiblesheaf}$ below$)$ sheaves on $\bP_{\bF_p}^1$, and $K_1,K_2$ are the associated trace functions, respectively. If $\cF_1,\cF_2$ have no common geometrically irreducible components, then there exists an absolute constant $C>0$ such that
\begin{align*}
\Bigg|\sum_{x\in\bF_p}K_1(x)\overline{K_2(x)}\Bigg|
\leqslant C\cdot\fc(\cF_1)^4\fc(\cF_2)^4\sqrt{p}.
\end{align*}
\end{proposition}

\subsection{Amiable trace functions for $q$-vdC}
Given an average $S(\Psi; I)$ in (\ref{eq:Psi-average}) with $\Psi$ being specialized to some trace function $K\bmod q$, the $A$-process in $q$-vdC (see Section \ref{sec:q-vdC} below) usually produces certain sums involving the difference $\Delta_h(K_p)$ for $p\mid q$ and some $h\in\bZ$. 
Observe that if
\[K_p(x)=\psi(ax^2+bx)
\quad\text{with}\quad
a\in\bF_p^\times,\]
one has $\Delta_h(K_p)(x)=\psi(-2ahx-bh-ah^2)$, and the resultant sum reveals no cancellation after one more $A$-process since the summand becomes a constant function.
As we will see, this phenomenon is essentially the only obstruction to square-root
cancellations. We thus need to determine when a trace function is suitable for the purpose of our analysis, to which we mean {\it amiable}. The similar arguments first appeared in \cite[Section 6]{Po14}, but a different convention was used therein.

We first formulate the admissibility.
\begin{definition}[Admissible sheaf]\label{def:admissiblesheaf}
An admissible
sheaf $\cF_p$ over $\bF_p$ is a middle-extension sheaf on $\bP_{\bF_p}^1$
which is pointwise pure of weight $0$ $($in the sense of Deligne \cite{De80}$).$ A trace function $K_p\bmod p$ said to be admissible if the corresponding sheaf $\cF_p$ is admissible.
A composite trace function $K\bmod{q}$ is said to be admissible, if the reduction $K_p$ is admissible for each $p\mid q.$
\end{definition}

Given an admissible sheaf, we say it is geometrically irreducible if the corresponding sheaf is geometrically irreducible.

\begin{definition}[Amiable sheaves and trace functions]\label{def:amiablesheaf}
Let $d$ be a non-negative integer, $p\geqslant2$ a prime and $q\geqslant1$ a squarefree number.  An admissible sheaf $\cF_p$ over $\bF_p$ is said to be $d$-amiable if it is geometrically isotypic and no geometrically irreducible component is geometrically isomorphic to an Artin--Schreier sheaf of the form $\cL_{\psi(P)},$ where $P\in\bF_p[X]$ is a polynomial of degree $\leqslant d.$ In such case, we also say the associated trace function $K_p$ is $d$-amiable. 

A composite trace function $K\bmod{q}$ is said to be compositely $d$-amiable if for each $p\mid q,$ $K_p$ can be decomposed into a sum of $d$-amiable trace functions, in which case we also say the corresponding sheaf $\cF:=(\cF_p)_{p\mid q}$ is compositely $d$-amiable.

In addition, a sheaf $($or its associated trace function$)$ is said to be $($compositely$)$ $\infty$-amiable if it is $($compositely$)$ amiable for any fixed $d\geqslant1.$

\end{definition}

\begin{remark}
For an admissible sheaf $\cF$ over $\bF_p,$ it is 1-amiable if and only if it is a geometrically isotypic Fourier sheaf. By Lemma \ref{lm:fouriertransform}, the Fourier transform $\ft_\psi(\cF)$ of an isotypic Fourier sheaf $\cF$ is also an isotypic Fourier sheaf, and is thus definitely 1-amiable.
\end{remark}

\begin{remark}
Given an admissible sheaf $\cF$, one has to determine if it is $\infty$-amiable when applying $q$-vdC along with quite a few iterations. A sufficient condition is that each geometrically irreducible component of $\cF$ is irreducible
of rank $\geqslant2$, or particularly if $\cF$ itself is geometrically irreducible of rank $\geqslant2$.
\end{remark}

\begin{remark}
According to Deligne, an admissible sheaf $\cF$ over $\bF_p$ can be decomposed into a direct sum of arithmetically irreducible components.
Regarding the geometric reducibility, each component is either geometrically isotypic or induced from a representation of $\Gal(K^{\mathrm{sep}}/k.K)$ for $k$ some finite extension of $\bF_p$ with $K=\bF_p(X)$. For the latter case, the associated trace function to such component vanishes identically. In the above sense, it suffices to consider geometrically isotypic sheaves, or more basically, geometrically irreducible sheaves.

Therefore, a compositely $d$-amiable $K\bmod q$ is a sum of at most $O(\fc^{\omega(q)})$ trace functions $K_i\bmod q$, and for each $p\mid q,$ the reduction $K_{i,p}$ of $K_i$ modulo $p$ is $d$-amiable. Here $\fc$ is a constant that controls the conductor of each sheaf associated to $K_{i,p}$. Note that $\fc^{\omega(q)}\ll q^\varepsilon$ for any $\varepsilon>0,$ and this factor is allowed in our subsequent estimates.
\end{remark}

To apply the $A$- and $B$-processes inductively, one needs to check that the amiability can be stable in a certain sense.
For an $\ell$-adic middle-extension sheaf $\cF$ on $\bP_{\bF_p}^1$, denote by $[+a]^*\cF$ the 
pullback of $\cF$ under the additive shift $n\mapsto n+a$, where $a\in\bF_p.$ We also write
$\widecheck{\cF}$ for the the middle-extension dual of $\cF,$ i.e., given a dense open set $j:U\hookrightarrow \bP^1$ where $\cF$ is lisse, we have
\[\widecheck{\cF}=j_*(j^*(\cF)'),\]
where $'$ denotes the lisse sheaf of $U$ associated to the contragredient of the representation of the fundamental group of $U$ corresponding to $j^*\cF.$ If $\cF$ is
an admissible sheaf, the trace function of $\widecheck{\cF}$ appears as the complex conjugate of $K$, a trace function of $\cF.$ In $A$-process, we expect (at least) that $n\mapsto \Delta_a(K)(n)=K(n)\overline{K(n+a)}$ is also an amiable trace function if $a\in\bF_p^\times.$ This of course requires the independence between the sheaves $\cF$ and $[+a]^*\cF$.

\begin{lemma}\label{lm:A-amiability}
Let $d$ be a positive integer and $p>d$. Suppose $\cF$ is a $d$-amiable admissible sheaf over $\bF_p$ with $\fc(\cF)\leqslant p.$ Then, for each $a\in\bF_p^\times,$ the sheaf $[+a]^*\cF\otimes\widecheck{\cF}$ is compositely $(d-1)$-amiable with
\begin{align}\label{eq:sheaftensor-conductor}
\fc([+a]^*\cF\otimes\widecheck{\cF})\leqslant 5\fc(\cF)^4.
\end{align}
More precisely, the trace function of $[+a]^*\cF\otimes\widecheck{\cF}$ can be decomposed into the sum of $\leqslant 5\fc(\cF)^4$ of trace functions, each of which is $(d-1)$-amiable and
has a conductor at most $5\fc(\cF)^4.$
\end{lemma}
\proof 
The amiability of $[+a]^*\cF\otimes\widecheck{\cF}$ follows immediately from \cite[Theorem 6.15]{Po14} and \cite[Proposition 8.3]{FKM15}. The inequality in \eqref{eq:sheaftensor-conductor} is a special case of \cite[Proposition 8.2]{FKM15}.
\endproof

\begin{remark}
 For any $\infty$-amiable admissible sheaf and sufficiently large prime $p$, we conclude from Lemma \ref{lm:A-amiability} that the geometrically isotypic components of $[+a]^*\cF\otimes\widecheck{\cF}$, for each $a\in\bF_p^\times$, are also $\infty$-amiable.
\end{remark}

\subsection{$\ell$-adic Fourier transforms}
As we will see, the $B$-process is an application of Poisson summation essentially, and the Fourier transform of the original trace function will appear. For the purpose of a reasonable interpretation, we introduce the $\ell$-adic Fourier transform over $\bF_p$ starting from a reformulation on the usual Fourier transform over $\bZ/p\bZ$ (up to an opposite sign). Such transforms are well-defined for Fourier sheaves (see also Katz \cite[Definition 8.2.2]{Ka88}).

\begin{definition}[Fourier sheaf]\label{def:fouriersheaf}
An admissible sheaf $\cF$ on $\bP_{\bF_p}^1$ is called a Fourier sheaf if no
geometrically irreducible component is geometrically isomorphic to an Artin--Schreier
sheaf  $\cL_\psi$  attached to some additive character $\psi$ of $\bF_p.$
\end{definition}

We collect the properties of Fourier transforms of Fourier sheaves due to Deligne \cite{De80}, Laumon \cite{La87}, 
Brylinski \cite{Br86}, Katz \cite{Ka88, Ka90} and Fouvry--Kowalski--Michel \cite{FKM15}.

\begin{lemma}\label{lm:fouriertransform}
Let $\psi$ be a non-trivial additive character of $\bF_p$ and $\cF$ a Fourier sheaf on $\bP_{\bF_p}^1.$  Then there exists an $\ell$-adic sheaf $\cG=\ft_\psi(\cF)$
called the Fourier transform of $\cF$, which is also an $\ell$-adic Fourier sheaf, with
the property that 
\begin{align}\label{eq:ell-adicFourier}
K_{\ft_\psi(\cF)}(y)
= \ft_\psi(K_\cF)(y)
:= -\frac{1}{\sqrt{p}}\sum_{x\in\bF_p}K_\cF(x)\psi(yx).
\end{align}
Furthermore, we have
\begin{itemize}

\item 
The sheaf $\cG$ is geometrically irreducible, or geometrically isotypic, if and only if $\cF$ is;

\item 
The Fourier transform is involutive, in the sense that we have a canonical arithmetic isomorphism
$\ft_\psi(\cG)\simeq[\times(-1)]^*\cF,$
where $[\times(-1)]^*$ denotes the pull-back by the map $x\mapsto-x;$

\item 
We have
\begin{align}\label{eq:cond-fouriertranform}
\fc(\ft_\psi(\cF))\leqslant10\fc(\cF)^2.\end{align}
\end{itemize}

\end{lemma}

\proof The last claim was proved by Fouvry, Kowalski and Michel \cite{FKM15} using the theory of local Fourier transforms developed by Laumon \cite{La87}, and the others can be found for instance in \cite[Theorem 8.4.1]{Ka90}.
\endproof

The inequality \eqref{eq:cond-fouriertranform} is essential in analytic applications, since it implies that if $p$ varies but
$\cF$ has a bounded conductor, so does the Fourier transform of $\cF$.

In the subsequent applications of $B$-process, one has to determine if the Fourier transform of a given Fourier sheaf is amiable, and sometimes the rank does this job. 
 According to Katz \cite[Lemma 7.3.9]{Ka90}, one has
\begin{align}\label{eq:FT-rank}
\rank(\ft_\psi(\cF))=\sum_{\lambda}\max(0,\lambda-1)+\sum_{x}(\Swan_x(\cF)+\Drop_x(\cF)),\end{align}
where $\lambda$ runs over the breaks of $\cF(\infty)$ and $x$ over the singularities of $\cF$ in $\overline{\bF}_p$. Here $\Drop_x(\cF)=\rank(\cF)-\dim(\cF_x)$, which is at least 1 at each singularity. 

For $f_1,f_2\in\bF_p[X]$ with $\deg(f_1)<\deg(f_2)<p$, the Artin--Schreier sheaf $\cF:=\cL_{\psi(f_1/f_2)}$ is of rank 1 and $\infty$-amiable for any primitive additive character $\psi$ of $\bF_p$. Since $\cF$ has at least one singularity in $\bA_{\bF_p}^1$, at which the Swan conductor is at least one, it then follows that the rank of the Fourier transform of $\cF$ is at least two, i.e.,
\[\rank(\ft_\psi(\cF))\geqslant2,\]
so that $\ft_\psi(\cF)$ is also $\infty$-amiable.

Following the above arguments, we may find several examples of trace functions that are $\infty$-amiable in the sense of Definition \ref{def:amiablesheaf}.
\begin{itemize}
\item $K_p(n)=\psi(f_1(n)\overline{f_2(n)})$, where $\psi$ is a primitive additive character, $f_1,f_2\in\bF_p[X]$, $\deg(f_1)<\deg(f_2)<p$;

\item $K_p(n)=\chi(f(n))\psi(g(n))$, where $\chi$ is a primitive multiplicative character mod $p$, $\psi$ is not necessarily primitive, $f,g$ are rational functions
and $f$ is not a $d$-th power of another rational function with $d$ being the order of $\chi$; we remark here that the linear case $n\mapsto\chi(n)\psi(n)$ is $\infty$-amiable because $\cL_\chi\otimes\cL_\psi$ is geometrically irreducible and has one singularity at $0.$

\item $K_p(n)=\kl_k(n,p)$ as a normalized hyper-Kloosterman sum of rank $k\geqslant2$;

\item The Fourier transforms of the above examples.
\end{itemize}

In view of the following lemma, we always arrive at
sheaves which are suitably amiable after one $B$-process, which allows us to apply the $A$-process further.

\begin{lemma}\label{lm:B-amiability}
Suppose $r\geqslant1, d\geqslant2$ and $a\in\bF_p^\times$. If $\cF$ is a compositely $d$-amiable sheaf on $\bP_{\bF_p}^1$ of rank $r$ with $\fc(\cF)\leqslant  p.$ Denote by $\cG$ the Fourier transform of $[+a]^*\cF\otimes\widecheck{\cF}$.
\begin{itemize}
\item If $r=1,$ then $\cG$ is compositely $1$-amiable when $d=2,$ and is compositely $\infty$-amiable when $d\geqslant3.$

\item If $r\geqslant2$ and $\cF$ is geometrically isotypic, then $\cG$ is compositely $2$-amiable. Moreover, for a given $a\in\bF_p^\times,$ if $[+a]^*\cF\otimes\widecheck{\cF}$ is geometrically irreducible, then $\cG$ is 
geometrically irreducible and $r^2$-amiable.
\end{itemize}
\end{lemma}

\proof 
Suppose $\cG$ contains $\cL_{\psi(g)}$ for some $g\in\bF_p[X]$ of degree $k\geqslant2$. 
Then $\ft_\psi(\cL_{\psi(g)})$ should be contained in $[+a]^*\cF\otimes\widecheck{\cF}$ up to a geometric isomorphism.
Note that $\ft_\psi(\cL_{\psi(g)})$ is lisse on $\bA_{\bF_p}^1$ and of rank $k-1$ by
\cite[Section 7.12]{Ka90}. Moreover, Lemma \ref{lm:A-amiability} implies that
$[+a]^*\cF\otimes\widecheck{\cF}$ is compositely $(d-1)$-amiable.

We first consider the case $r=1,$ which implies that $[+a]^*\cF\otimes\widecheck{\cF}$ is geometrically irreducible of rank 1, and $\ft_\psi(\cL_{\psi(g)})\simeq[+a]^*\cF\otimes\widecheck{\cF}.$ If $d=2$, then it is possible that $g$ is of degree $k=2$, in which case one finds
$\cG$ should be 1-amiable.
If $d\geqslant3$, then we must have $k\geqslant3,$ and $\ft_\psi(\cL_{\psi(g)})$ is of rank at least 2, which implies that the above $k$ of finite values cannot exist, i.e., 
$\cG$ must be $\infty$-amiable.

We now come to the case $r\geqslant2$, for which we find $[+a]^*\cF\otimes\widecheck{\cF}$ is compositely $\infty$-amiable by Lemma \ref{lm:A-amiability}. There is no reason that $[+a]^*\cF\otimes\widecheck{\cF}$ must be geometrically isotypic. However, it should be semisimple, and can be decomposed into isotypic components. Denote by $\cF_1$ the component that is geometrically isomorphic to $\ft_\psi(\cL_{\psi(g)})$. If $\cF_1$ is of rank $r_1\geqslant2,$ we are done since $\ft_\psi(\cL_{\psi(g)})$ is of rank $k-1\geqslant2.$
If $\cF_1$ is of rank $1$, then so is $\ft_\psi(\cL_{\psi(g)})$, which implies that $g$ is a quadratic polynomial in $\bF_p[X].$ This, however, contradicts the amiability of $[+a]^*\cF\otimes\widecheck{\cF}$.
Therefore, in any case we find the Fourier transform $\cG$ is compositely $2$-amiable. However, if $[+a]^*\cF\otimes\widecheck{\cF}$ is geometrically irreducible, then so is $\cG$ by Lemma \ref{lm:fouriertransform}, and $\cG$ should be $r^2$-amiable following the above arguments.
\endproof

\begin{remark}\label{rm:iterationwithB}
If the initial sheaf $\cF$ in Lemma \ref{lm:B-amiability} is of $SL_2$-type, i.e., the geometric monodromy group of $\cF$ is equal to $SL_2$, then $[+a]^*\cF\otimes\widecheck{\cF}$ is geometrically irreducible and of rank $2^2$ except for certain $a\in\bF_p,$ the number of which can be bounded in terms of $\fc(\cF).$ This is an immediate consequence of the the representation theory of $SU(2).$ 
Therefore, after applying $k$ times of $A$-processes, $\cF$ will become an irreducible sheaf of rank $2^{2^k}$ expect for certain special shifts, the number of which can also be bounded in terms of $\fc(\cF).$
\end{remark}

Although one cannot proceed with arbitrarily many iterations with $A$- and $B$-processes, Lemma \ref{lm:B-amiability} should be sufficient in many practical applications. However, on the other hand, one has to check the rank or amiability of the resultant sheaves after $A$- and $B$-processes. Thanks to the insight of Will Sawin, we will find that such checking works can be avoided if one starts from some particular sheaves, which are said to be {\it universally amiable}. More precisely, 
an admissible sheaf $\mathcal F_p$ on $\bP^1_{\bF_p}$ is universally amiable if it is a geometrically isotypic Fourier sheaf and its local monodromy at $\infty$ has all slopes $\leq 1$.  This will be given by Definition \ref{def:universallyamiable}. There are many typical examples of 
universally amiable sheaves arising in analytic number theory, and one may see  if $\cF$ is a universally amiable sheaf on $\bP_{\bF_p}^1$, then so are the sheaves after $A$- and $B$-processes up to some harmless errors (see Lemmas \ref{lm:ua-A-process} and \ref{lm:ua-B-process} for instance). In particular, all these sheaves will be compositely $\infty$-amiable. This will make the {\it arithmetic exponent pairs} for trace functions much easier to apply in practice. 
The details will be given in Appendix \ref{appendix:uasheaves}.

\smallskip

\section{$q$-analogue of the van der Corput method and exponent pairs}\label{sec:q-vdC}

\subsection{Framework of $q$-vdC}

Given a positive squarefree number $q$ and a composite trace function $K\bmod{q}$, we are interested in the cancellations among the average
\begin{align*}
\sum_{n\in I}K(n),
\end{align*}
where $I$ is some interval. If $|I|=q$, the sum is a complete one due to the periodicity of $K$.
If $|I|<q$, we are then working on an incomplete sum, a non-trivial bound of which is the main objective in many practical problems in analytic number theory. A common treatment is to transform the incomplete sum to complete ones (individual or on average) via Fourier analysis, and this can be demonstrated by Lemma \ref{lm:Bprocess} below.
In fact, we have
\begin{align*}
\sum_{n\in I}K(n)& \ll |I| q^{-1/2}\big(|\widehat{K}(0)|+|I|^{-1}q(\log q)\max_{h\neq0}|\widehat{K}(h)|\big)
\\\noalign{\vskip -2mm}
& \ll \|\widehat{K}\|_\infty\big(|I| q^{-1}+1\big)q^{1/2}\log q.\end{align*}
In many situations, we have $\|\widehat{K}\|_\infty := \max_x |\widehat{K}(x)|\ll q^{\varepsilon}$, and the above estimate is thus non-trivial for $|I|>q^{1/2+\varepsilon}$. This is what P\'olya and Vinogradov have done on estimates for incomplete multiplicative character sums, as we have mentioned in the first section. 
There seems no universal method to make a power-enlargement to the non-trivial range $|I|>q^{1/2+\varepsilon}$ for general $K$ and $q$; a relevant progress was recently made by Fouvry, Kowalski, Michel, {\it et al} \cite{FKM+17} to cover the gap between $q^{1/2+\varepsilon}$ and $q^{1/2}$. Nevertheless, Burgess \cite{Bu62, Bu63} succeeded when $K(n)=\chi(n)$ with general $q$, non-trivial $\chi$ and $|I|>q^{3/8+\varepsilon}$, which implies the first subconvexity for Dirichlet $L$-functions $L(\tfrac{1}{2}, \chi)$. A fascinating phenomenon was discovered by Heath-Brown \cite{HB78}, who was able to derive a Weyl-type bound for $L(\tfrac{1}{2}, \chi)$ if $q$ allows suitable factorizations. 
This is far earlier than his breakthrough on the greatest prime factors of $n^3+2$ in \cite{HB01} as discussed in Section \ref{sec:Introduction}.

As in the classical van der Corput method, 
estimates for such incomplete sums follow from the Weyl differencing and Poisson summation, 
which are usually called $A$-process and $B$-process, respectively. 
To formulate the two processes, we start with the general sums $S(\Psi; I)$ defined as in \eqref{eq:Psi-average}.

\begin{lemma}[$A$-process]\label{lm:Aprocess}
Assume $q=q_1q_2$ with $(q_1,q_2)=1$ and $\Psi_i:\bZ/q_i\bZ\rightarrow\bC$. 
Define $\Psi=\Psi_1\Psi_2$, then we have
\begin{align*}
|S(\Psi; I)|^2
\le 2\|\Psi_2\|_{\infty}^2 q_2 \Big(|I|+\sum_{0<|\ell|\leqslant L}\Big|\sum_{n\in\bZ} \mathbb{1}_{I}(n)\mathbb{1}_{I}(n+\ell q_2)\Delta_{\ell q_2}(\Psi_1)(n)\Big|\Big)
\end{align*}
for any $1\leqslant L\leqslant |I|/q_2,$
where $\mathbb{1}_{I}(\cdot)$ denotes the indicator function of $I$
and $\Delta_{\ell q_2}(\Psi_1)$ is defined as in \eqref{def:Deltahf}.
\end{lemma}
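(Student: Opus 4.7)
The plan is to perform Weyl differencing along the arithmetic progression of common difference $q_2$, using periodicity of $\Psi_2$ to peel it off with Cauchy--Schwarz and thereby convert an average of $\Psi_1$ into the shifted correlations $\Delta_{\ell q_2}(\Psi_1)$.

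First I would pick an integer $L$ with $L\asymp |I|/q_2$, say $L=\lfloor|I|/q_2\rfloor+1$. For each $\ell\in\{0,1,\dots,L-1\}$, the change of variable $n=m+\ell q_2$ rewrites
\begin{align*}
S(\Psi;I)=\sum_m \mathbb{1}_I(m+\ell q_2)\,\Psi_1(m+\ell q_2)\,\Psi_2(m+\ell q_2),
\end{align*}
and since $\Psi_2$ is $q_2$-periodic, the last factor simplifies to $\Psi_2(m)$. Averaging over $\ell$ yields
\begin{align*}
L\cdot S(\Psi;I)=\sum_m \Psi_2(m)\sum_{0\le\ell<L}\mathbb{1}_I(m+\ell q_2)\,\Psi_1(m+\ell q_2),
\end{align*}
where the outer $m$-sum is effectively supported on an interval of length at most $|I|+Lq_2\ll |I|$.

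Next I would apply Cauchy--Schwarz to separate $\Psi_2$ from the inner sum, obtaining
\begin{align*}
L^2|S(\Psi;I)|^2\le \|\Psi_2\|_\infty^2\bigl(|I|+Lq_2\bigr)\sum_m\Big|\sum_{0\le\ell<L}\mathbb{1}_I(m+\ell q_2)\,\Psi_1(m+\ell q_2)\Big|^2.
\end{align*}
Expanding the square and changing variable $n=m+\ell_2 q_2$, $\ell=\ell_1-\ell_2$ turns the double $\ell_1,\ell_2$ sum into
\begin{align*}
\sum_{|\ell|<L}(L-|\ell|)\sum_n \mathbb{1}_I(n)\mathbb{1}_I(n+\ell q_2)\,\Psi_1(n+\ell q_2)\overline{\Psi_1(n)},
\end{align*}
and the inner product is exactly $\overline{\Delta_{\ell q_2}(\Psi_1)(n)}$ in the notation of \eqref{def:Deltahf}.

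Finally I would isolate the diagonal contribution $\ell=0$, which is at most $L\|\Psi_1\|_\infty^2|I|\le L|I|$ under the standing normalization $\|\Psi_1\|_\infty\le 1$, bound the off-diagonal multiplicity $L-|\ell|$ by $L$, pass to absolute values, and divide by $L^2$. This gives
\begin{align*}
|S(\Psi;I)|^2\le \|\Psi_2\|_\infty^2\cdot\frac{|I|+Lq_2}{L}\Big(|I|+\sum_{0<|\ell|<L}\Big|\sum_n \mathbb{1}_I(n)\mathbb{1}_I(n+\ell q_2)\Delta_{\ell q_2}(\Psi_1)(n)\Big|\Big),
\end{align*}
and the choice of $L$ makes $(|I|+Lq_2)/L$ comparable to $q_2$ while the constraint $|\ell|<L$ becomes $|\ell|\le |I|/q_2$, matching the claim. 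The main obstacle is purely bookkeeping: reconciling the range parameter $L$ of the differencing with the target range $|\ell|\le |I|/q_2$ and verifying that the various $O(1)$ losses coming from the Cauchy--Schwarz envelope and the multiplicity factor can be absorbed into the coefficient $\|\Psi_2\|_\infty^2 q_2$.
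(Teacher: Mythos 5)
Your argument is correct and is essentially the paper's own proof: shift by multiples of $q_2$, peel off $\Psi_2$ by periodicity, apply Cauchy--Schwarz, and expand into diagonal plus shifted correlations $\Delta_{\ell q_2}(\Psi_1)$. The bounded constant factors you worry about (the envelope $|I|+Lq_2$ versus $|I|$, and the implicit normalization $\|\Psi_1\|_\infty\le 1$ for the diagonal) are treated just as loosely in the paper, whose choice $L=[|I|/q_2]$ and claim that the $n$-sum has length at most $|I|$ incur the same harmless $O(1)$ losses.
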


\begin{lemma}[$B$-process]\label{lm:Bprocess}
For $\Psi : \bZ/q\bZ\rightarrow\bC$, there exist an $a\in\bZ$ and some interval $\cI$ not containing $0$ with $|\cI|\leqslant q/|I|,$ such that
\begin{align*}
S(\Psi; I)&\ll \frac{|I|}{\sqrt{q}}\bigg(|\widehat{\Psi}(0)|+(\log q)\bigg|\sum_{h\in \cI}\widehat{\Psi}(h)
\mathrm{e}\Big(\frac{ha}{q}\Big)\bigg|\bigg).
\end{align*}
\end{lemma}

Lemmas \ref{lm:Aprocess} and \ref{lm:Bprocess} were stated explicitly by Irving \cite{Ir16} in a slightly different setting. 
Here we present the proof of Lemma \ref{lm:Aprocess} in our settings, which is essentially the same with Irving's.

\proof
Put $|I|=N$.
Assume $N>q_2,$ otherwise the lemma follows trivially. 
For any $\ell\in\bZ$, we have
\begin{align*}
S(\Psi; I) = \sum_{n\in\bZ} \mathbb{1}_{I}(n+\ell q_2)\Psi(n+\ell q_2).
\end{align*}
Summing over $\ell\leqslant L\in\bZ$ with $1\le L\leqslant N/q_2,$ we find
\begin{align*}
S(\Psi; I) = L^{-1} \sum_{\ell\leqslant L} \sum_{n\in\bZ} \mathbb{1}_{I}(n+\ell q_2) \Psi(n+\ell q_2).
\end{align*}

Note that $\Psi(n+\ell q_2)=\Psi_1(n+\ell q_2)\Psi_2(n)$.
It then follows that
\begin{align*}
|S(\Psi; I)|
& \le \|\Psi_2\|_{\infty} L^{-1} \sum_{n\in\bZ} \Big|\sum_{\ell\le L} \mathbb{1}_{I}(n+\ell q_2) \Psi_1(n+\ell q_2)\Big|.
\end{align*}
Since the outer sum over $n$ is of length at most $2N$, by Cauchy inequality we have
\begin{align*}
|S(\Psi; I)|^2
& \le 2\|\Psi_2\|_{\infty}^2 L^{-2} N\sum_{n\in\bZ}\Big|\sum_{\ell\le L} \mathbb{1}_{I}(n+\ell q_2)\Psi_1(n+\ell q_2)\Big|^2
\\
& \le 4\|\Psi_2\|_{\infty}^2L^{-1} N\Big(N +
\sum_{0<|\ell|\le L}\Big|\sum_{n\in\bZ} \mathbb{1}_{I}(n) \mathbb{1}_{I}(n+\ell q_2)\Delta_{\ell q_2}(\Psi_1)(n)\Big|\Big).
\end{align*}
This completes the proof of the lemma.
\endproof

In practice, the $A$-process is also known as the Weyl differencing and is usually employed with a number of iterations. The resultant sum is roughly of the same length with the original one, but the modulus becomes reasonably smaller, so that more cancellations become possible.
In contrast to $A$-process, the $B$-process transforms the original sum to a dual form (of different length but with the same modulus),
and this is better known as the Poisson summation (or completing method), going back to P\'olya and Vinogradov on the estimate for incomplete character sums.

Different combinations of $A$- and $B$-processes lead to different estimates for incomplete sums. 
We now recall some pioneer works that benefited from $q$-vdC.

\begin{itemize}
\item As mentioned before, Heath-Brown proved that $P^+(n^3+2)>n^{1+10^{-303}}$ for infinitely many $n$,
for which he used $\Psi(n)=\mathrm{e}(f_1(n)\overline{f_2(n)}/q)$ with $f_1,f_2\in\bZ[X]$ \cite{HB01}; the estimate for such exponential sums was recently used by Dartyge \cite{Da15} when studying the greatest prime factors of $n^4-n^2+1$ and by de la Bret\`eche \cite{dlB15} when extending Dartyge's result to any even unitary irreducible quartic polynomials with integral coefficients and with Galois group isomorphic to $\bZ/2\bZ\times\bZ/2\bZ$.

\item Earlier than the above example, Heath-Brown \cite{HB78} obtained a Weyl-type subconvexity for $L(\tfrac{1}{2}, \chi)$ when the modulus factorizes in a certain way.

\item  Graham and Ringrose \cite{GR90} got an extended zero-free region for Dirichlet $L$-functions with smooth moduli (on average), from which they deduced $\Omega$-results on least quadratic non-residues.

\item With the help of the ABC-conjecture, Heath-Brown \cite{HB10} was able to give a sharp estimate for the cubic Weyl sum $\sum_{n\leqslant N} \mathrm{e}(\alpha n^3)$
for any quadratic irrational $\alpha.$ 
The analytic exponential sum can, following a suitable Diophantine approximation to $\alpha$, be transformed to an algebraic one such that the modulus factorizes suitably.

\item Pierce \cite{Pi06} introduced the ideas of $q$-vdC to the square sieve of Heath-Brown, which enables her to derive the first non-trivial bound for the $3$-torsion of the class group of $\bQ(\sqrt{-D}).$ In their joint work on 
a conjecture of Serre concerning the number of rational points of
bounded height on a finite cover of projective space $\bP^{n-1}$, Heath-Brown and Pierce \cite{HBP12} can succeed in the special case of smooth cyclic covers of large degrees invoking the ideas of $q$-vdC to the power sieve.

\item The idea of $q$-vdC was contained implicitly in the recent breakthrough of Zhang \cite{Zh14} on bounded gaps between primes, and this was later highlighted by Polymath \cite{Po14} in the subsequent improvement.

\item Irving \cite{Ir15} has beaten the classical barrier of distribution of divisor functions in arithmetic progressions to smooth moduli, which previously follows from Weil's bound for Kloosterman sums to general moduli.

\item Irving \cite{Ir16} obtained a sub-Weyl bound for $L(\tfrac{1}{2}, \chi)$ when the modulus has only small prime factors.

\item Blomer and Mili\'cevi\'c \cite{BM15a} evaluated the second moment of twisted modular $L$-functions $L(\frac{1}{2},f\otimes\chi)$ as $\chi$ runs over primitive characters mod $q$ satisfying certain factorizations, where $f$ is a fixed (holomorphic or Maa{\ss}) Hecke cusp form.
\end{itemize}

The classical van der Corput method was also extended to algebraic exponential and character sums modulo prime powers, in which case one focuses on a fixed prime and the power tends to infinity. This is known as the {\it $p$-adic van der Corput method} and the reader is referred to \cite{BM15b} and \cite{Mi16} for more details.

\subsection{Arithmetic exponent pairs}
We now restrict our attention in Lemmas \ref{lm:Aprocess} and \ref{lm:Bprocess} to composite trace functions
and develop the method of (\textit{arithmetic}) \textit{exponent pairs} for incomplete sums of such trace functions, as an analogue of classical exponent pairs for analytic exponential sums initiated by Phillips (see e.g. \cite{GK91}).

Let $q$ be a positive squarefree number and assume $q$ has the suitable factorization $q=q_1q_2\cdots q_J$ for some $J\geqslant1$, where $q_j$'s are not necessarily primes but they are pairwise coprime. 
To each $q_j$, we associate a (possibly composite) trace function $K(\cdot,q_j).$
Put
\begin{align}\label{eq:K}
K(n)=\prod_{1\le j\le J} K(n,q_j).\end{align}
In what follows, we assume $K$ is admissible and, for each $p\mid q$,  
$\fc(\cF_p)\leqslant \fc$ for some uniform $\fc>0,$ where $\cF_p$ denotes the $\ell$-adic sheaf corresponding to $K_p.$ There is a convention that $K(\cdot,q)$ is identically 1 if $q=1$.

Let $\delta$ be a fixed positive integer such that $(\delta,q)=1$
and let $W_\delta:\bZ/\delta\bZ\rightarrow\bC$ be an arbitrary function, 
which we call {\it deformation factor} roughly.  

Keeping the above notation and assumptions, we consider the following sum
\begin{align}\label{eq:fS(K,W)}
\mathfrak{S}(K, W) := \sum_{n\in I} K(n) W_\delta(n),
\end{align}
where $I = ~]M, M+N]$ for some $M\in\bZ$. 
In what follows, we always assume $N<q\delta$, i.e., we will work on incomplete sums.

For $J\geqslant1$, put 
\begin{align}\label{eq:bold-kappalambdanumu}
\begin{cases}
\boldsymbol{\kappa}:= (\kappa_1,\kappa_2, \dots,\kappa_J)^t,\\
\boldsymbol{\lambda}:= (\lambda_1,\lambda_2, \dots,\lambda_J)^t,\\
\boldsymbol{\nu}:= (\nu_1,\nu_2, \dots,\nu_J)^t,
\end{cases}
\end{align}
where the superscript $t$ denotes the transpose of a vector. Let $(\boldsymbol{\kappa},\boldsymbol{\lambda},\boldsymbol{\nu})_J$ be a tuple such that
\begin{align}\tag*{$(\boldsymbol{\varOmega}_J):$}
\mathfrak{S}(K, W)
\ll_{J, \varepsilon, \fc} N^{\varepsilon}\|W_\delta\|_{\infty}\sum_{1\leqslant j\leqslant J} \Big(\frac{q_{J+1-j}}{N}\Big)^{\kappa_j}
N^{\lambda_j}\delta^{\nu_j},
\end{align}
where the implied constant is allowed to depend on $J,\varepsilon$ and $\fc$.
Let $(\kappa,\lambda,\nu)$ be a tuple such that
\begin{align}\tag*{$(\boldsymbol{\varOmega}):$}
\mathfrak{S}(K, W)
\ll_{\varepsilon, \fc} N^{\varepsilon}\|W_\delta\|_{\infty}(q/N)^{\kappa}N^{\lambda}\delta^{\nu},
\end{align}
where the implied constant is allowed to depend on $\varepsilon$ and $\fc.$ 
Ignoring the contributions from $\delta$, the terms $q_{J+1-j}/N$ and $q/N$ take the place
of derivatives of amplitude functions in classical analytic exponential sums.
The exponents $\boldsymbol{\nu}$ and $\nu$ are not quite essential in applications since we usually have extra summation of $\delta$ over some sparse sets, so that contributions from $\delta$ can be controlled effectively. 
For this reason, we keep the convention of {\it exponent pairs} although $(\boldsymbol{\kappa},\boldsymbol{\lambda},\boldsymbol{\nu})_J$ and $(\kappa,\lambda,\nu)$ appear as triads.
In practice, it is usually important to display the sizes of those exponents. To this end, we would like to introduce the restrictions
\begin{align}\label{eq:exponents-inequalies}
0\leqslant\kappa\leqslant\frac{1}{2}\leqslant\lambda\leqslant1,\ \ 0\leqslant\nu\leqslant1,
\end{align}
where subscripts can be added if necessary. For $J,L\geqslant1,$ denote by $\fA_q(J,L)$ the set of compositely $J$-amiable trace functions $K\bmod q$ such that $\widehat{K}\bmod q$ are compositely $L$-amiable.
We are now ready to introduce the following definitions.
\begin{definition}[Exponent pairs]\label{def:exponentpairs}
Suppose the coordinates in $(\boldsymbol{\kappa},\boldsymbol{\lambda},\boldsymbol{\nu})_J$ and $(\kappa,\lambda,\nu)$ satisfy the restrictions in $\eqref{eq:exponents-inequalies}$ with/without subscripts.

Let $J,L\geqslant1$ and $(q,\delta)=1,N\leqslant q\delta.$ We say
$(\boldsymbol{\kappa},\boldsymbol{\lambda},\boldsymbol{\nu})_J$ is an exponent pair of width $(J;L)$, if $(\boldsymbol{\varOmega}_J)$ holds for all $K\in\fA_q(J,L)$ and arbitrary functions $W_\delta:\bZ/\delta\bZ\rightarrow\bC$.

Similarly, we say
$(\kappa,\lambda,\nu)$ is an exponent pair of width $(J;L)$, if $(\boldsymbol{\varOmega})$ holds for all $K\in\fA_q(J,L)$ and arbitrary functions $W_\delta:\bZ/\delta\bZ\rightarrow\bC$. An exponent pair of width $(\infty;L)$ with some $L\geqslant1$ is called an arithmetic exponent pair.
\end{definition}

According to the above definition, one may see that an exponent pair $(\kappa,\lambda,\nu)$ of width $(J;L)$ is automatically of width $(J';L')$ with $J'\geqslant J,L'\geqslant L$, and in particular, such an exponent pair must be an arithmetic exponent pair. 

We now give some initial choices for the exponents in $A$- and $B$-processes.

\begin{proposition}\label{prop:initial}
Let $J\geqslant1.$
If $K$ is compositely $J$-amiable, then $(\boldsymbol{\varOmega}_1)$ holds with
\begin{align}\label{eq:A-initial}
(\boldsymbol{\kappa},\boldsymbol{\lambda},\boldsymbol{\nu})_1
= (\tfrac{1}{2}, \tfrac{1}{2}, 1),
\end{align}
and $(\boldsymbol{\varOmega})$ holds with
\begin{align}\label{eq:B-initial}
(\kappa, \lambda, \nu) = (\tfrac{1}{2}, \tfrac{1}{2}, 1).
\end{align}

In other words, $ (\tfrac{1}{2}, \tfrac{1}{2}, 1)$ is an exponent pair of width $(J;1)$ for each $J\geqslant1.$
\end{proposition}

\proof
By Lemma \ref{lm:Bprocess}, we have
\begin{align*}\mathfrak{S}(K, W)
\ll \frac{N}{\sqrt{q\delta}}\bigg(|\widehat{K}(0) \widehat{W}_\delta(0)|
+\log(q\delta)\bigg|\sum_{h\in \cI} \widehat{K}(h)\widehat{W}_\delta(h) \mathrm{e}\bigg(\frac{ah}{q\delta}\bigg)\bigg|\bigg)
\end{align*}
for some interval $\cI$ of length at most $q\delta/N$ and $a\in\bZ.$
Note that $\widehat{K}(h)\ll q^{\varepsilon}$
in view of the Chinese remainder theorem and Proposition \ref{prop:RH}. Hence we find
\begin{align*}
\mathfrak{S}(K, W)
\ll (qN)^{\varepsilon} \sqrt{q\delta}\|\widehat{W}_\delta\|_{\infty},
\end{align*}
which proves Proposition \ref{prop:initial} by noting that $\|\widehat{W}_\delta\|_{\infty}\leqslant \|W_\delta\|_{\infty}\sqrt{\delta}$.
\endproof

\begin{remark}
The classical van der Corput method for analytic exponential sums starts from the trivial 
exponent pair $(0,1)$, and this corresponds to the $q$-analogue
\[\mathfrak{S}(K, W)\ll  (q/N)^{0} N^{1} \delta^0.\]
However, we can always employ $B$-process to transform an incomplete sum
to a complete one, so that our initial exponent pair in Proposition \ref{prop:initial}
are in fact compared to $(\frac{1}{2},\frac{1}{2})=B\cdot(0,1)$ in the classical case.
\end{remark}

\begin{remark}
The introduction of $W_\delta$ makes the exponent pairs quite flexible in applications. For instance, to estimate a short exponential sum with respect to modulus $m$, which is not squarefree, one may take $q$ to be the squarefree part of $m$ and $\delta=m/q$, the squarefull part of $m$. By the Chinese remainder theorem, $W_\delta$ can be expressed in terms of certain exponentials and is thus absolutely bounded. Due to the sparse distribution of $\delta$, the contribution from $\delta$ usually is small and not harmful in practical applications. One can refer to the proof of Theorem \ref{thm:Brun-Titchmarsh} as a typical example.
\end{remark}

The main task to develop the method of (arithmetic) exponent pairs is to show how to
produce new exponent pairs from old ones. In the subsequent three sections,
we will present two alternative ways to produce new exponent pairs by virtue of 
$A$- and $B$-processes. In fact, there is no difference in the $B$-process and we proceed differently only in the $A$-process.
In the first approach, we assume $\delta$ is small in a certain way, such that the effect coming from the deformation factor $W_\delta$
can be eliminated by one $A$-process. As a result, the values of $\nu$ will take good shapes that are easy to control in the new exponent pairs.
In the second approach, we do not input the assumption on the size of $\delta$ and tend to prove the statements as general as possible.

For $\boldsymbol{\kappa},\boldsymbol{\lambda},\boldsymbol{\nu}$ given by \eqref{eq:bold-kappalambdanumu}, we define the $A_1,A_2$ maps via
\begin{align}\label{eq:exponentpair-afterA12}
A_i\cdot(\boldsymbol{\kappa},\boldsymbol{\lambda},\boldsymbol{\nu})_J:=(A_i\boldsymbol{\kappa},A_i\boldsymbol{\lambda},A_i\boldsymbol{\nu})_J
\end{align}
for $i=1,2$ with
\begin{align}\label{eq:exponentpair-afterA1}
\begin{cases}
A_1\boldsymbol{\kappa}=(\tfrac{1}{2},\tfrac{\kappa_1}{2}, \dots,\tfrac{\kappa_J}{2})^t\in\bR^{J+1},\\
A_1\boldsymbol{\lambda}=(1,\tfrac{\lambda_1+1}{2}, \dots,\tfrac{\lambda_J+1}{2})^t\in\bR^{J+1},\\
A_1\boldsymbol{\nu}=(1,0,\dots,0)^t\in\bR^{J+1}.
\end{cases}
\end{align}
and 
\begin{align}\label{eq:exponentpair-afterA2}
\begin{cases}
A_2\boldsymbol{\kappa}=(\tfrac{1}{2},\tfrac{\kappa_1}{2},\dots,\tfrac{\kappa_J}{2})^t\in\bR^{J+1},\\
A_2\boldsymbol{\lambda}=(1,\tfrac{\lambda_1+1}{2},\dots,\tfrac{\lambda_J+1}{2})^t\in\bR^{J+1},\\
A_2\boldsymbol{\nu}=(\frac{1}{2},\frac{\nu_1}{2},\dots,\frac{\nu_J}{2})^t\in\bR^{J+1}.\end{cases}
\end{align}
Note that $A_1$ and $A_2$ differ from each other in the last coordinate $\boldsymbol{\nu}$ only.
The above two maps correspond to two different approaches when applying the $A$-process. Regarding the $B$-process, we define
\begin{align}\label{eq:exponentpair-afterB}
B\cdot(\kappa,\lambda,\nu)
=\Big(\lambda-\frac{1}{2}, ~\kappa+\frac{1}{2}, ~\lambda+\nu-\kappa\Big).
\end{align}

In the following two sections, we will prove that the exponent pairs 
under the above maps can produce new exponent pairs in a certain way.
We conclude this section by proving that the inequalities in \eqref{eq:exponents-inequalies} (with or without subscripts)
are stable when applying $A$- and $B$-processes iteratively.
In fact, the stabilities under the maps $A_1$ and $A_2$ are clear and we only consider the case of map $B$. Clearly, we have
\begin{align*}
\lambda-\frac{1}{2}\leqslant\frac{1}{2}\leqslant\kappa+\frac{1}{2}\leqslant1\end{align*}
by assuming the first part in \eqref{eq:exponents-inequalies}. It remains to check
that 
\begin{align*}
\lambda+\nu-\kappa\leqslant1.
\end{align*}
Since 
one goes back the original estimate if $B$-process is applied twice and we may suppose $(\kappa,\lambda,\nu)$ comes from the $A$-process as in \eqref{eq:exponentpair-afterA1} and \eqref{eq:exponentpair-afterA2}, which we will examine right now. The case in \eqref{eq:exponentpair-afterA1} is easier and we only examine the case in \eqref{eq:exponentpair-afterA2}, for which we have (with subscripts)
\begin{align*}
\lambda_j+\nu_j-\kappa_j=\frac{\lambda_j+1}{2}+\frac{\nu_j}{2}-\frac{\kappa_j}{2}
&=\frac{\lambda_j+\nu_j-\kappa_j+1}{2}\leqslant \frac{1+1}{2}=1
\end{align*}
by induction.
We are done, i.e., starting from $(\frac{1}{2},\frac{1}{2},1)$, all new exponent pairs $(\boldsymbol{\kappa},\boldsymbol{\lambda},\boldsymbol{\nu})_J$ and $(\kappa,\lambda,\nu)$ produced by $A$- and $B$-processes must satisfy the inequalities in \eqref{eq:exponents-inequalies} with or without subscripts.

In what follows, we keep in mind that the inequalities \eqref{eq:exponents-inequalies} are always valid since we always start from Proposition \ref{prop:initial}, which will be assumed henceforth. It is an interesting and challenging problem to find other exponent pairs by inventing new approaches.

\smallskip

\section{Producing new exponent pairs: the first approach}\label{sec:exponentpairs-restricted}

Let us first state our theorems.

\begin{theorem}[$A$-process]\label{thm:exponentpair-A1}
Let $J\geqslant1$. 
If $(\boldsymbol{\kappa},\boldsymbol{\lambda},\boldsymbol{\nu})_J$ is an exponent pair of width $(J;1),$ then $A_1\cdot(\boldsymbol{\kappa},\boldsymbol{\lambda},\boldsymbol{\nu})_J$ as given by $\eqref{eq:exponentpair-afterA12}$ and $\eqref{eq:exponentpair-afterA1}$ is an exponent pair of width $(J+1;1)$.
\end{theorem}

\begin{theorem}[$B$-process]\label{thm:exponentpair-B}
If $(\kappa,\lambda,\nu)$ is an exponent pair of width $(1;1),$ then so is $B\cdot(\kappa,\lambda,\nu)$ as given by $\eqref{eq:exponentpair-afterB}.$
\end{theorem}

\begin{remark}
Let $J\geqslant1$.
Given a compositely $J$-amiable trace function $K\bmod q$, it is necessary to understand if the Fourier transform $\widehat{K}$ is also $J$-amiable. This is the case when $J=1$, and fails clearly when $J>1$. As an counterexample, one may take $\widehat{K}(x)=\ue(x^3/q)$, which is just $2$-amiable, however $K$ is compositely $\infty$-amiable according to \cite[Section 7.12]{Ka90}. On the other hand, the Fourier transform is involutive, and it is redundant to apply the $B$-process twice consecutively in practice. More precisely, one should apply the $B$-process at the first step, or after several applications of the $A$-process. In view of Lemmas \ref{lm:A-amiability} and \ref{lm:B-amiability}, we may see that if $(\kappa,\lambda,\nu)$ is an arithmetic exponent pair, then so is $B\cdot(\kappa,\lambda,\nu)$.
\end{remark}

The following proposition is an immediate consequence of Theorems \ref{thm:exponentpair-A1}, \ref{thm:exponentpair-B} and Proposition \ref{prop:initial}.

\begin{proposition}\label{prop:A-J>=1}
Let $J\geq2$.
Then 
$(\boldsymbol{\kappa},\boldsymbol{\lambda},\boldsymbol{\nu})_J$
with
\begin{align*}
\begin{cases}
\boldsymbol{\kappa}=(\tfrac{1}{2},\tfrac{1}{2},\dots,\tfrac{1}{2^{J-2}},\tfrac{1}{2^{J-1}})^t\in\bR^J,\\
\boldsymbol{\lambda}=(1,1,\dots,1,1-\frac{1}{2^{J-1}})^t\in\bR^J,\\
\boldsymbol{\nu}=(1,0,\dots,0,0)^t\in\bR^J
\end{cases}
\end{align*}
is an exponent pair of width $(J;1).$
\end{proposition}

\begin{remark}
As one can see from Theorem \ref{thm:exponentpair-A1} and Proposition \ref{prop:A-J>=1}, the saving against trivial estimates decays exponentially in $J$, so that one cannot make the savings ideally efficient
by taking larger value of $J$. 
In most applications, the choice $J=3$ or $4$ is usually sufficient. 
We will give explicit estimates for $\fS(K, W)$ in Section \ref{sec:explicitestimates} to display the role of each factor of the modulus.
\end{remark}

In certain applications, we may decompose $q$ suitably as a product of several squarefrees that are pairwise coprime, so that an estimate of the shape $(\boldsymbol{\varOmega})$  can be produced
by balancing all these terms in $(\boldsymbol{\varOmega}_J)$.
In such case, we would like to produce new exponent pairs from old ones 
by applying suitable combinations of $A$- or $B$-processes. 
As a counterpart of Theorem \ref{thm:exponentpair-B}, 
we should determine the shape of $(\kappa,\lambda,\nu)$ after one step of $A$-process. To this end, we define the maps
\begin{align}\label{eq:exponentpair-afterA1-optimatization}
A_1\cdot(\kappa,\lambda,\nu)
= \Big(\frac{\kappa}{2(\kappa+1)},~\frac{\kappa+\lambda+1}{2(\kappa+1)},~\frac{\kappa}{2(\kappa+1)}\Big),
\end{align}
\begin{align}\label{eq:exponentpair-afterA1-optimatization*}
A_1^*\cdot(\kappa,\lambda,\nu)
= \Big(\frac{\kappa}{2(\kappa+1)},~\frac{\kappa+\lambda+1}{2(\kappa+1)},~\frac{\kappa}{\kappa+1}\Big).
\end{align}

\begin{theorem}\label{thm:exponentpair-A1-optimization}
Suppose each prime factor of  $q$ is at most $q^\eta$ for any $\eta>0$. 
Let $(\kappa,\lambda,\nu)$ be an exponent pair of width $(J;1)$ for some $J\geqslant1$.
Then

{\rm (i)} $A_1\cdot (\kappa,\lambda,\nu)$ given by $\eqref{eq:exponentpair-afterA1-optimatization}$ is also an exponent pair of width $(J+1;1)$ if
\begin{align}\label{eq:restrictionforA1-treatment1}
q^{\kappa}N^{\lambda-\kappa}\geqslant\delta,\ \ \ (q\delta)^{2\kappa+1}\geqslant N^{3\kappa-\lambda+2}.
\end{align}

{\rm (ii)} $A_1^*\cdot (\kappa,\lambda,\nu)$ given by $\eqref{eq:exponentpair-afterA1-optimatization*}$ is also an exponent pair of width $(J+1;1)$ if
\begin{align}\label{eq:restrictionforA1-treatment2}
q^{\kappa}N^{\lambda-\kappa}\geqslant\delta^2.
\end{align}

\end{theorem}

Note that $\lambda\geqslant\frac{1}{2}$, from which we get
\begin{align*}
\frac{2\kappa+1}{3\kappa-\lambda+2}\geqslant\frac{2}{3}.
\end{align*}
Hence the constraint \eqref{eq:restrictionforA1-treatment2} is redundant if we assume that $N\leqslant (q\delta)^{\frac{2}{3}}$, which could be a reasonable assumption since we are usually interested in short averages.
Hence Theorem \ref{thm:exponentpair-A1-optimization} yields the following consequence by observing that
\begin{align*}
A_1^{-1}\cdot(\kappa,\lambda,*)
= \Big(\frac{2\kappa}{1-2\kappa},~\frac{2\lambda-1}{1-2\kappa},~*\Big),\quad 
B^{-1}\cdot (\kappa,\lambda,*)= \Big(\lambda-\frac{1}{2},~\kappa+\frac{1}{2},~*\Big).
\end{align*}
\begin{corollary}\label{corollary:exponentpair-A1-optimization}
Let $N\leqslant (q\delta)^{\frac{2}{3}}$. Suppose each prime factor of  $q$ is at most $q^\eta$ for any $\eta>0$.
Then $(\kappa,\lambda,\nu)$ is an exponent pair of width $(J;1)$ for some $J\geqslant1$, if there exists some other arithmetic exponent pair $(\kappa_0,\lambda_0,\nu_0)$ such that one of the following conditions holds:
\begin{itemize}
\item $(\kappa,\lambda,\nu)=A_1^*\cdot(\kappa_0,\lambda_0,\nu_0)$
and $q^{2\kappa}N^{2\lambda-2\kappa-1}\geqslant\delta^{1-2\kappa}$;
\item $(\kappa,\lambda,\nu)=BA_1^*\cdot(\kappa_0,\lambda_0,\nu_0)$
and $q^{2\lambda-1}N^{2\lambda+2\kappa+1}\geqslant\delta^{2-2\lambda}$.
\end{itemize}
\end{corollary}

\begin{remark}
One can see that Theorems \ref{thm:exponentpair-A1-optimization} and \ref{thm:exponentpair-B} coincide with classical exponent pairs for analytic exponential sums up to the exponents in $\delta$.

The modulus $q$ is required to satisfy quite good factorizations in Theorem \ref{thm:exponentpair-A1-optimization}.
We will look into this issue in Section \ref{sec:tauAPs-subconvexity}
on the distribution of divisor functions in arithmetic progressions and subconvexity of Dirichlet $L$-functions. Such an observation also plays an essential role 
in applications to the quadratic Brun--Titchmarsh theorem in Section \ref{sec:quadraticBT}.
\end{remark}

\begin{remark}
In applications of Theorems \ref{thm:exponentpair-B} and \ref{thm:exponentpair-A1-optimization} to producing new exponent pairs from old ones, one should determine if $K$ is compositely amiable enough in the sense of Definition \ref{def:amiablesheaf}. 
Of course, a sufficient condition is that $K$ is compositely $\infty$-amiable. 
However, in practice, that $K$ is compositely $J$-amiable for a certain large value of $J$ is usually sufficient 
since one becomes quite close to optimal exponent pairs after the first several iterations. 
For instance, the above arguments are applicable to the Weyl sum
$\sum_{n\leqslant N} \mathrm{e}(n^{2016}/q)$
if $q$ is smooth enough, although the summand is just $2015$-amiable.
\end{remark}

The following table gives the first several exponent pairs produced by different combinations of $A$- and $B$-processes 
to $(\frac{1}{2},\frac{1}{2},1)$ as shown in Theorems \ref{thm:exponentpair-A1-optimization} and \ref{thm:exponentpair-B}.
This can be compared with the table in \cite[p.117]{Ti86}.
\vskip -3mm
\begin{table}[htbp]
\renewcommand{\arraystretch}{1.4}
\renewcommand{\tabcolsep}{1.6mm}
\begin{tabular}{|c|c|c|c|c|}
\hline
Processes 
& 
$A_1$
& 
$A_1^2$
& 
$A_1^3$
& 
$BA_1^2$
\\
\hline
$(\kappa, \lambda,\nu)$
& 
$(\frac{1}{6}, \frac{2}{3}, \frac{1}{6})$ 
& 
$(\frac{1}{14}, \frac{11}{14}, \frac{1}{14})$ 
& 
$(\frac{1}{30}, \frac{13}{15}, \frac{1}{30})$ 
& 
$(\frac{2}{7}, \frac{4}{7}, \frac{11}{14})$ 
\\
\hline
\hline
Processes 
& 
$BA_1^3$
& 
$A_1BA_1^2$
& 
$A_1^2BA_1^2$
& 
$BA_1BA_1^2$
\\
\hline
$(\kappa, \lambda,\nu)$
&
$(\frac{11}{30}, \frac{8}{15}, \frac{13}{15})$ 
& 
$(\frac{1}{9}, \frac{13}{18}, \frac{1}{9})$ 
& 
$(\frac{1}{20}, \frac{33}{40}, \frac{1}{20})$ 
& 
$(\frac{2}{9}, \frac{11}{18}, \frac{13}{18})$ 
\\
\hline
\end{tabular}
\\~\\
\vskip 2mm
\caption{List of (arithmetic) exponent pairs. I}
\end{table}

\smallskip

\section{Proof of Theorems $\ref{thm:exponentpair-A1}$, \ref{thm:exponentpair-B} and \ref{thm:exponentpair-A1-optimization}} \label{sec:proofs-exponentpairs}

\subsection{Proof of Theorem $\ref{thm:exponentpair-A1}$}
In order to produce an exponent pair of width $(J+1;1)$, we now suppose $K$ is compositely $(J+1)$-amiable and $W_\delta$ does not vanish identically. Recall the definition of $\fS(K,W)$ in \eqref{eq:fS(K,W)} with $N=|I|.$

We assume
$N\geqslant q_{J+1}\delta$, otherwise the estimate for $\mathfrak{S}(K, W)$ follows trivially. We would like to make some initial treatments to 
$\fS(K, W)$, so that the $A$- and $B$-processes can be employed in a suitable way.
We first write
\begin{align*}
K(n)=\prod_{p\mid q}K_p(n)=\prod_{p\mid q}\Big(\sum_{j\in\cJ(p)}K_p(j;n)\Big),
\end{align*}
where $|\cJ(p)|\leqslant \fc(\cF_p)\leqslant \fc$ and $n\mapsto K_p(j;n)$ is geometrically isotypic
for each $j\in\cJ(p)$. Expanding the product over $p\mid q$, we may write
\begin{align*}
K(n)=\sum_{i\in\cJ}K_q(i;n),
\end{align*}
where $|\cJ|\leqslant \fc^{\omega(q)}$, and for each $i\in\cJ$, $K_q(i;n)$ is a composite trace function mod $q$ and its reduction mod $p$ is geometrically isotypic. These allow us to write
\begin{align}\label{eq:fS(K,W)-decomposition}
\fS(K, W)= \sum_{i\in\cJ}\fS(K_i, W),
\end{align}
where
\begin{align*}
\fS(K_i, W)
&=\sum_{n\in I}K_q(i;n)W_\delta(n).
\end{align*}
 
For each fixed $i\in\cJ$, we would like to apply the $A$-process to $\fS(K_i, W)$, getting
\begin{align}\label{eq:S(Kj,W)^2}
|\fS(K_i, W)|^2
\ll \|W_\delta\|_{\infty}^2\Big(L^{-1}N^2 + L^{-1}N \sum_{\ell\le L}|\mathfrak{A}(\ell)|\Big),
\end{align}
where $L$ is an integer to be chosen later with $1\leqslant L\leqslant N/(q_{J+1}\delta),$ and
\begin{align*}
\mathfrak{A}(\ell)
:= \sum_{n\in \widetilde{I}} K_i(n, Q_{J+1})\overline{K_i(n+\ell q_{J+1}\delta, Q_{J+1})}
\end{align*}
with some interval $\widetilde{I}\subseteq I$ and $Q_{J+1}=q/q_{J+1}$.
Here $K_i(n,Q_{J+1})$ denotes the reduction of $K_q(i;n)$ mod $Q_{J+1}$ as shown in the definition \eqref{eq:K}.

After one $A$-process, the expected estimate will follow from the bound for $\mathfrak{A}(\ell)$. Put 
\[q_j' := q_j/(\ell, q_j)
\qquad
(1\leqslant j\leqslant J),\]
in which case 
the trace function is a reduced form of $K(n,Q_{J+1})\overline{K(n+\ell q_{J+1},Q_{J+1})}$ 
mod $Q_{J+1}^*:=Q_{J+1}/(\ell, Q_{J+1}) = q_1'q_2'\cdots q_{J}'.$
Hence we may write
\begin{align*}
\mathfrak{A}(\ell)
= \sum_{n\in \widetilde{I}} \widetilde{K}(n) \widetilde{W}(n),
\end{align*}
where 
\begin{align*}\widetilde{K}(n) := K_i(n, Q_{J+1}^*)\overline{K_i(n+\ell q_{J+1}\delta, Q_{J+1}^*)},\end{align*}
and
\begin{align*}
\widetilde{W}(n)
:=|K_i(n, (\ell, Q_{J+1}))|^2
\end{align*}
is the new deformation factor $\bmod{(\ell, Q_{J+1})}$. 
According to Lemma \ref{lm:A-amiability}, we find the trace function $\widetilde{K}(n)$, which is well-defined mod $Q_J^*,$ can be expressed as a sum of at most $O(q^\varepsilon)$ compositely $J$-amiable trace functions mod $Q_J^*.$ To each of such compositely $J$-amiable trace functions, we would like to employ the estimates by assumption. 

{\bf Case I.} We first assume $N<Q_{J+1}^*(\ell, Q_{J+1})=Q_{J+1}.$

By assumption and $\|\widetilde{W}\|_\infty\ll q^\varepsilon$, we may deduce that
\begin{align}\label{eq:A(l)}
\mathfrak{A}(\ell)
\ll N^{\varepsilon}\sum_{1\leqslant j\leqslant J} \Big(\frac{q_{J+1-j}}{N}\Big)^{\kappa_j} 
N^{\lambda_j}(\ell, Q_{J+1})^{\nu_j}
\end{align}
for each $\ell\leqslant L$. Summing over $\ell\leqslant L$ and taking $L=[N/(q_{J+1}\delta)]$, it follows from \eqref{eq:S(Kj,W)^2} and \eqref{eq:A(l)} that
\begin{align*}
\fS(K, W)
& \ll \|W_\delta\|_\infty\Big\{\sqrt{Nq_{J+1}\delta}+N^{\varepsilon} 
\sum_{1\le j\le J} \Big(\frac{q_{J+1-j}}{N}\Big)^{\frac{1}{2}\kappa_j} N^{\frac{1}{2}(\lambda_j+1)}\Big\}.\end{align*}
Here we have used a fact that
\begin{align*}
\sum_{\ell\leqslant L}(\ell, Q_{J+1})^{\nu_j}\ll \tau(Q_{J+1})L,
\end{align*}
which is valid since we may assume $\nu_j\leqslant 1.$

{\bf Case II.} 
It remains to consider the case $N\geqslant Q_{J+1}$, which implies $q^{1/2}\leqslant (N q_{J+1})^{1/2}$.
An alternative estimate for $\fS(K, W)$ shows
\[\fS(K, W)\ll N^{\varepsilon} (q\delta)^{1/2} \|\widehat{W_\delta}\|_\infty\ll N^{\varepsilon} q^{1/2} \delta \|W_\delta\|_\infty\ll N^{\varepsilon}  (N q_{J+1})^{1/2}\delta \|W_\delta\|_\infty.\]

Combining the above two cases, we may arrive at the inequality
\begin{align*}
\fS(K, W)
& \ll \|W_\delta\|_\infty\Big\{\sqrt{Nq_{J+1}}\delta+N^{\varepsilon} 
\sum_{1\le j\le J} \Big(\frac{q_{J+1-j}}{N}\Big)^{\frac{1}{2}\kappa_j} N^{\frac{1}{2}(\lambda_j+1)} \delta^0\Big\},\end{align*}
which completes the proof.

\begin{remark}

Before closing this subsection, we now give some remarks on {\it universally amiable sheaves} (see Definition \ref{def:universallyamiable}). In practice, if each $K_p$ in the definition of $K\bmod q$ corresponds to some universally amiable sheaf, then from the above decomposition, and the reduction of each $K_q(i;n)$ mod $p$ is geometrically isotypic and universally amiable. After one $A$-process, it is natural to expect that the resultant function $\widetilde{K}$ can be 
expressed as a sum of at most $O(q^\varepsilon)$ composite trace functions, the reduction of which $\bmod p$ corresponds to some universally amiable sheaf on $\bP^1_{\bF_p}$. According to Lemma \ref{lm:ua-A-process}, this is indeed the case up to another harmless function, the support and sup-norm of which can be bounded in terms of the conductors of the underlying sheaves. Therefore, 
all the subsequent arguments for universally amiable sheaves will be similar to those for $(J+1)$-amiable sheaves.
\end{remark}

\subsection{Proof of Theorem \ref{thm:exponentpair-B}}

Suppose $K$ is compositely $1$-amiable, then so is $\widehat{K}$.
Firstly, we put $\Psi(n) = K(n) W_\delta(n)$ in Lemma \ref{lm:Bprocess}, 
so that the Fourier transform $\widehat{\Psi}$ of $\Psi$ over $\bZ/q\delta\bZ$ can be expressed by
$\widehat{\Psi}(h) = \widehat{K}(\overline{\delta}h) \widehat{W}_\delta(\overline{q}h)$,
where $q\overline{q}\equiv1\bmod\delta$ and $\delta\overline{\delta}\equiv1\bmod{q}.$ 
From Lemma \ref{lm:Bprocess}, we thus have
\begin{align*}
\mathfrak{S}(K, W)
\ll \frac{N}{\sqrt{q\delta}}
\bigg\{|\widehat{K}(0) \widehat{W}_\delta(0)|
+\log (q\delta) \bigg|\sum_{h\in \cI} \widehat{K}(\overline{\delta}h) \widehat{W}_\delta(\overline{q}h)
\mathrm{e}\Big(\frac{ha}{q\delta}\Big)\bigg|\bigg\}
\end{align*}
for some interval $\cI$ of length at most $q\delta/N$ and $a\in\bZ.$ 
By Proposition \ref{prop:RH}, we have 
\[\widehat{K}(0)=\prod_{p|q}\widehat{K}_p(0)\ll q^{\varepsilon}.\]
The above inequality reduces to
\begin{align}\label{eq:S(W,K)-B2}
\mathfrak{S}(K, W)
\ll \frac{N^{1+\varepsilon}}{\sqrt{q\delta}}
\bigg\{\sqrt{\delta}\|W_\delta\|_\infty+\bigg|\sum_{h\in \cI} \widehat{K}(\overline{\delta}h) \widehat{W}_\delta(\overline{q}h)
\mathrm{e}\Big(\frac{ha}{q\delta}\Big)\bigg|\bigg\}
\end{align}

Note that
\begin{align*}
\mathrm{e}\Big(\frac{ha}{q\delta}\Big)
= \mathrm{e}\Big(\frac{ha\overline{q}}{\delta}\Big)
\mathrm{e}\Big(\frac{ha\overline{\delta}}{q}\Big).
\end{align*}
We are now in a good position to apply the hypothesis for the trace function 
\begin{align*}
h\mapsto \widehat{K}(\overline{\delta}h) \mathrm{e}\Big(\frac{ha\overline{\delta}}{q}\Big)
\end{align*}
and the deformation factor
\begin{align}\label{eq:newdeformation}
h\mapsto \widehat{W}_\delta(\overline{q}h)
\mathrm{e}\Big(\frac{ha\overline{q}}{\delta}\Big).
\end{align}
Therefore, we derive from \eqref{eq:S(W,K)-B2} that
\begin{align*}
\mathfrak{S}(K, W)
& \ll N^{1+\varepsilon}q^{-\frac{1}{2}}\|W_\delta\|_{\infty}
+N^{1+\varepsilon}(q\delta)^{-\frac{1}{2}}
\Big(\frac{q}{q\delta/N}\Big)^\kappa (q\delta/N)^\lambda \delta^\nu \|\widehat{W}_\delta\|_{\infty}\\
& \ll N^{1+\varepsilon}q^{-\frac{1}{2}}\|W_\delta\|_{\infty}
+N^{\varepsilon}
(q/N)^{\lambda-\frac{1}{2}}N^{\kappa+\frac{1}{2}} \delta^{\lambda+\nu-\kappa} \|W_\delta\|_{\infty}.\end{align*}
Recall that $\kappa\leqslant\lambda$ and $N\leqslant q\delta$. Therefore, the second term will dominate and the above estimate can be reduced to
\begin{align*}
\mathfrak{S}(K, W)
&  \ll N^{\varepsilon}
(q/N)^{\lambda-\frac{1}{2}}N^{\kappa+\frac{1}{2}} \delta^{\lambda+\nu-\kappa} \|W_\delta\|_{\infty}
\end{align*}
as stated in Theorem \ref{thm:exponentpair-B}.

\subsection{Proof of Theorem \ref{thm:exponentpair-A1-optimization} and Corollary \ref{corollary:exponentpair-A1-optimization}}
We now assume $q=q_1q_2$ with $(q_1,q_2)=1$ and sizes of $q_1,q_2$ can be chosen freely due to small prime factors of $q$.

We first follow the arguments and convention in the proof of Theorem \ref{thm:exponentpair-A1}. In fact,
for each fixed $i\in\cJ$, we would like to apply the $A$-process to $K_q(i;n)$, getting
\begin{align*}
|\mathfrak{S}(K_i, W)|^2
\ll \|W_\delta\|_{\infty}^2\Big(L^{-1}N^2 + L^{-1}N \sum_{\ell\le L}|\mathfrak{B}(\ell)|\Big),
\end{align*}
where $L$ is an integer to be chosen later with $1\le L\leqslant N/(q_2\delta),$ and
\begin{align*}
\mathfrak{B}(\ell)
:= \sum_{n\in \widetilde{I}} K_i(n,q_1)\overline{K_i(n+\ell q_2\delta, q_1)}
\end{align*}
with some interval $\widetilde{I}\subseteq I$ and $|\widetilde{I}|\leqslant N$.

There are two different treatments to $\mathfrak{B}(\ell)$. The first one is to follow exactly the proof of Theorem \ref{thm:exponentpair-A1}, which yields
\begin{align}\label{eq:A-beforeoptimization-treatment1}
|\mathfrak{S}(K_i, W)|^2
\ll Nq_2\delta^2+N^{\varepsilon} (q_1/N)^{\kappa} N^{\lambda+1}.
\end{align}
The second treatment is to follow the proof of Theorem \ref{thm:exponentpair-A1} in the very beginning. The difference arises when distinguishing the two cases $|\widetilde{I}|<q_1$ and $|\widetilde{I}|\geqslant q_1.$ The latter case is related to the estimate for a complete sum, and we have
\begin{align}\label{eq:A-beforeoptimization-treatment2}
|\mathfrak{S}(K_i, W)|^2
\ll Nq_2\delta+N^{\varepsilon} (q_1/N)^{\kappa} N^{\lambda+1}+\frac{N^{2+\varepsilon}}{\sqrt{q_1}},
\end{align}
the last term of which comes from the case $|\widetilde{I}|\geqslant q_1.$

To balance the two terms in \eqref{eq:A-beforeoptimization-treatment1}, we may choose $q_1,q_2$ with
\begin{align*}
q_1\asymp q^{\frac{1}{\kappa+1}} N^{\frac{\kappa-\lambda}{\kappa+1}}\delta^{\frac{2}{\kappa+1}},
\qquad 
q_2\asymp q^{\frac{\kappa}{\kappa+1}}N^{\frac{\lambda-\kappa}{\kappa+1}}\delta^{-\frac{2}{\kappa+1}}.
\end{align*}
One always has $q_1\geqslant1$ since $\lambda-\kappa\leqslant1$ and $N\leqslant q\delta$. We also have $q_2\geqslant1$ thanks to \eqref{eq:restrictionforA1-treatment2}. Inserting \eqref{eq:A-beforeoptimization-treatment1} to \eqref{eq:fS(K,W)-decomposition}, we find
\begin{align*}
\mathfrak{S}(K, W)
\ll N^{\varepsilon} (q/N)^\frac{\kappa}{2(\kappa+1)} N^\frac{\kappa+\lambda+1}{2(\kappa+1)}\delta^\frac{\kappa}{\kappa+1}
\end{align*}
as expected.

To balance the first two terms in \eqref{eq:A-beforeoptimization-treatment2}, we may choose $q_1,q_2$ with
\begin{align*}
q_1\asymp q^{\frac{1}{\kappa+1}} N^{\frac{\kappa-\lambda}{\kappa+1}}\delta^{\frac{1}{\kappa+1}},
\qquad 
q_2\asymp q^{\frac{\kappa}{\kappa+1}}N^{\frac{\lambda-\kappa}{\kappa+1}}\delta^{-\frac{1}{\kappa+1}}.
\end{align*}
One always has $q_1\geqslant1$ since $\lambda-\kappa\leqslant1$ and $N\leqslant q\delta$. We also have $q_2\geqslant1$ thanks to the first restriction in \eqref{eq:restrictionforA1-treatment1}.
With the above choices of $q_1$ and $q_2$, one may check the third term in \eqref{eq:A-beforeoptimization-treatment2} is negligible in view of the second restriction in \eqref{eq:restrictionforA1-treatment1} and thus \eqref{eq:A-beforeoptimization-treatment2} yields
\begin{align*}
\mathfrak{S}(K, W)
\ll N^{\varepsilon} (q/N)^\frac{\kappa}{2(\kappa+1)} N^\frac{\kappa+\lambda+1}{2(\kappa+1)}\delta^\frac{\kappa}{2(\kappa+1)}
\end{align*}
as expected.

\smallskip

\section{Producing new exponent pairs: the second approach}\label{sec:exponentpairs-general}

Our second approach gives the following alternative iterations in the $A$-process: the difference only occurs in the coordinate $\boldsymbol{\nu}$ compared with the first approach.
\begin{theorem}[$A$-process]\label{thm:exponentpair-A2}
Let $J\geqslant1$. 
If $(\boldsymbol{\kappa},\boldsymbol{\lambda},\boldsymbol{\nu})_J$ is an exponent pair of width $(J;1),$ then $A_2\cdot(\boldsymbol{\kappa},\boldsymbol{\lambda},\boldsymbol{\nu})_J$ as given by $\eqref{eq:exponentpair-afterA12}$ and $\eqref{eq:exponentpair-afterA2}$ is an exponent pair of width $(J+1;1).$
\end{theorem}

As a counterpart of Proposition \ref{prop:A-J>=1}, we have following exponent pairs.
\begin{proposition}\label{prop:A-J>=1-general}
Let $J\geq2$.
Then 
$(\boldsymbol{\kappa},\boldsymbol{\lambda},\boldsymbol{\nu})_J$
with
\begin{align*}
\begin{cases}
\boldsymbol{\kappa}=(\tfrac{1}{2},\tfrac{1}{2},\dots,\tfrac{1}{2^{J-2}},\tfrac{1}{2^{J-1}})\in\bR^J,\\
\boldsymbol{\lambda}=(1,1,\dots,1,1-\frac{1}{2^{J-1}})\in\bR^J,\\
\boldsymbol{\nu}=(\frac{1}{2},\frac{1}{4},\dots,\frac{1}{4},\frac{1}{2})\in\bR^J
\end{cases}
\end{align*}
is an exponent pair of width $(J;1).$
\end{proposition}

We also have a counterpart of Theorem \ref{thm:exponentpair-A1-optimization}. To state our theorem, we need to introduce the $A_2$ map applied to $(\kappa,\lambda,\nu)$. Precisely, define
\begin{align}\label{eq:exponentpair-afterA2-optimization}
A_2\cdot(\kappa,\lambda,\nu)
= \bigg(\frac{\kappa}{2(\kappa+1)},~\frac{\kappa+\lambda+1}{2(\kappa+1)},~\frac{1}{2}\bigg).
\end{align}

\begin{theorem}\label{thm:exponentpair-A2-optimization}
Suppose each of prime factor of $q$ is at most $q^\eta$ for any $\eta>0$. 
If $(\kappa,\lambda,\nu)$ is an exponent pair of width $(J;1)$ for some $J\geqslant1$, then $A_2\cdot (\kappa,\lambda,\nu)$ is an exponent pair of width $(J+1;1).$
\end{theorem}

In view of Theorem \ref{thm:exponentpair-A2-optimization}, Lemmas \ref{lm:A-amiability} and \ref{lm:B-amiability} together with Remark \ref{rm:iterationwithB}, we have the following assertions.
\begin{theorem}\label{thm:exponentpair-afterAB}
Suppose each of prime factor of $q$ is at most $q^\eta$ for any $\eta>0$. Let $J\geqslant1$ be a positive integer and $(\kappa,\lambda,\nu)$ an exponent pair of width $(J;1).$ 

For each $k\geqslant0,$ $A_2^k\cdot (\kappa,\lambda,\nu)$ is
an exponent pair of width $(J+k;1),$ and $A_2^kB\cdot (\kappa,\lambda,\nu)$ is
an exponent pair of width $(k+3;1)$ if $2^{2k}\geqslant J.$ In particular, $A_2^k\cdot (\kappa,\lambda,\nu)$ and $A_2^kB\cdot (\kappa,\lambda,\nu)$ are both arithmetic exponent pairs.
\end{theorem}

\subsection{Sketch the proof of Theorem \ref{thm:exponentpair-A2}}
The proof of Theorem \ref{thm:exponentpair-A2} is quite similar to that of Theorem \ref{thm:exponentpair-A1}, and the essential difference is that the modulus $q_{J+1}$, instead of $q_{J+1}\delta$, is used here in the Weyl differencing ($A$-process). We now give the sketch of the proof and adopt the same notation as in Section \ref{sec:proofs-exponentpairs}, sometimes with different meanings.
By Lemma \ref{lm:Aprocess}, for each $\fS(K_i, W)$ in \eqref{eq:fS(K,W)-decomposition}, we find
\begin{align}\label{eq:S(W,K)^2-general}
|\mathfrak{S}(K_i, W)|^2
\ll L^{-1}N^2 + L^{-1}N \sum_{\ell\le L}|\mathfrak{A}(\ell)|,
\end{align}
where $L$ is an integer to be chosen later with $1\leqslant L \leqslant N/q_{J+1},$
\begin{align*}
\mathfrak{A}(\ell)
:= \sum_{n\in \widetilde{I}} K_i(n, Q_{J+1})\overline{K_i(n+\ell q_{J+1}, Q_{J+1})}
W_\delta(n)\overline{W_\delta(n+\ell q_{J+1})}
\end{align*}
with some interval $\widetilde{I}\subseteq I$ and $Q_{J+1}=q/q_{J+1}$.
Here $K(n,Q_{J+1})$ denotes the reduction of $K$ mod $Q_{J+1}$ as shown in the definition \eqref{eq:K}.

After one $A$-process, the expected estimate follows from the bound for $\mathfrak{A}(\ell)$. Put 
\[q_j' := q_j/(\ell, q_j)
\qquad
(1\leqslant j\leqslant J),\]
in which case 
the trace function is a reduced form of $K_i(n,Q_{J+1})\overline{K_i(n+\ell q_{J+1},Q_{J+1})}$ 
mod $Q_{J+1}^*:=Q_{J+1}/(\ell, Q_{J+1}) = q_1'q_2'\cdots q_{J}'.$
Hence we may write
\begin{align*}
\mathfrak{A}(\ell)
= \sum_{n\in \widetilde{I}} \widetilde{K}(n) \widetilde{W}(n),
\end{align*}
where $\widetilde{K}(n) := K_i(n, Q_{J+1}^*)\overline{K_i(n+\ell q_{J+1}, Q_{J+1}^*)}$,
and
\begin{align*}
\widetilde{W}(n)
:= W_\delta(n)\overline{W_\delta(n+\ell q_{J+1})}|K_i(n, (\ell, Q_{J+1}))|^2
\end{align*}
is the new deformation factor $\bmod{\delta(\ell, Q_{J+1})}$. 
According to Lemma \ref{lm:A-amiability}, we find the trace function $\widetilde{K}(n)$, which is well-defined mod $Q_J^*,$ can also be expressed as a sum of at most $O(q^\varepsilon)$ compositely $J$-amiable trace functions mod $Q_J^*.$

{\bf Case I.} We first assume $N<Q_{J+1}^*\delta(\ell, Q_{J+1})=Q_{J+1}\delta.$

By hypothesis, we may deduce that
\begin{align}\label{eq:A(l)-general}
\mathfrak{A}(\ell)
\ll N^{\varepsilon}\sum_{1\leqslant j\leqslant J} \Big(\frac{q_{J+1-j}}{N}\Big)^{\kappa_j} 
N^{\lambda_j}\delta^{\nu_j}(\ell, Q_{J+1})^{\nu_j}\|\widetilde{W}\|_\infty
\end{align}
for each $\ell\leqslant L$.
Summing over $\ell\leqslant L$ and taking $L=[N/q_{J+1}]$, it follows from \eqref{eq:S(W,K)^2-general} and \eqref{eq:A(l)-general} that
\begin{align*}
\fS(K, W)
& \ll N^\varepsilon\|W_\delta\|_\infty\Big\{(Nq_{J+1})^{1/2}+
\sum_{1\le j\le J} \Big(\frac{q_{J+1-j}}{N}\Big)^{\frac{1}{2}\kappa_j} N^{\frac{1}{2}(\lambda_j+1)} \delta^{\frac{\nu_j}{2}}\Big\}.
\end{align*}

{\bf Case II.} 
It remains to consider the case $N\geqslant Q_{J+1}\delta$.
An alternative estimate for $\mathfrak{S}(K, W)$ reads
\[\fS(K, W)\ll N^{\varepsilon} q^{1/2}\delta \|W_\delta\|_\infty\ll N^{\varepsilon}  (N q_{J+1}\delta)^{1/2} \|W_\delta\|_\infty\]
as given by \eqref{eq:B-initial} and also
$q\delta\leqslant N q_{J+1}$
by assumption.

Combining the above two cases, we conclude the following bound
\begin{align*}
\fS(K, W)
& \ll N^\varepsilon\|W_\delta\|_\infty\Big\{(Nq_{J+1}\delta)^{1/2}+
\sum_{1\le j\le J} \Big(\frac{q_{J+1-j}}{N}\Big)^{\frac{1}{2}\kappa_j} N^{\frac{1}{2}(\lambda_j+1)} \delta^{\frac{\nu_j}{2}}\Big\},
\end{align*}
which completes the proof of Theorem \ref{thm:exponentpair-A2}.

\subsection{Proof of Theorem \ref{thm:exponentpair-A2-optimization}}
We also assume $q=q_1q_2$ with $(q_1,q_2)=1$ and the sizes of $q_1,q_2$ can be chosen freely.

Following the similar arguments to those in the proof of Theorem \ref{thm:exponentpair-A1}, we get
\begin{align*}
\mathfrak{S}(K, W)^2
& \ll N^{\varepsilon}\|W_\delta\|_\infty^2(Nq_2\delta+(q_1/N)^{\kappa} N^{\lambda+1} \delta^{\nu})
\end{align*}
by assumption. To balance the two terms, we may choose $q_1,q_2$ by
\begin{align}\label{eq:q1q2-general}
q_1\asymp q^{\frac{1}{\kappa+1}} N^{\frac{\kappa-\lambda}{\kappa+1}},
\qquad 
q_2\asymp q^{\frac{\kappa}{\kappa+1}}N^{\frac{\lambda-\kappa}{\kappa+1}},
\end{align}
so that the above estimate becomes
\begin{align*}
\mathfrak{S}(K, W)
\ll N^{\varepsilon} (q/N)^\frac{\kappa}{2(\kappa+1)} N^\frac{\kappa+\lambda+1}{2(\kappa+1)}
\delta^{\frac{1}{2}}  \|W_\delta\|_\infty
\end{align*}
as expected, completing the proof of Theorem \ref{thm:exponentpair-A2-optimization}.

\smallskip

\section{Arithmetic exponent pairs with constraints}\label{sec:exponentpairs-constraints}

In practice, the assumption about good factorizations in Theorems \ref{thm:exponentpair-A1-optimization} and \ref{thm:exponentpair-A2-optimization} is a bit strong; this is usually not satisfied in certain applications. More precisely, the sizes of some factors of $q$ might be restricted, so that one cannot make balances as freely as in \eqref{eq:q1q2-general}.
Suppose $q$ has a factor which is of size $Q$, and the complementary divisor has no prime factors exceeding $q^\eta$ for any $\eta>0$. We can prove a variant of Theorems \ref{thm:exponentpair-A1-optimization} and \ref{thm:exponentpair-A2-optimization} subject to this constraint. 

In fact, we can argue as in the proof of Theorem \ref{thm:exponentpair-A2-optimization}. 
Making the choice \eqref{eq:q1q2-general}, one should assume $q_1\geqslant Q$, 
in which case the iteration can be applied.
Note that
\begin{align*}
A_2^{-1}\cdot (\kappa,\lambda,*)
= \bigg(\frac{2\kappa}{1-2\kappa},~\frac{2\lambda-1}{1-2\kappa},~*\bigg)
\end{align*}
for $(\kappa,\lambda,\nu)\neq(\frac{1}{2},\frac{1}{2},1)$.
Hence we obtain an exponent pair $(\kappa,\lambda,\nu)$, 
if it is produced by $A$-processes from $(\frac{1}{2},\frac{1}{2},1)$ and 
$q^{1-2\kappa}N^{2\kappa-2\lambda+1}\geqslant Q$.
On the other hand, if the exponent pair $(\kappa,\lambda,\nu)$ is produced by $A$-processes with an extra $B$-process in the beginning, one should replace $(\kappa,\lambda)$ by $(\lambda-\frac{1}{2},\kappa+\frac{1}{2})$ in the above restriction, thus getting the new constraint $q^{2-2\lambda}N^{2\lambda-2\kappa-1}\geqslant Q$. We now summarize our argument as follows.

\begin{theorem}\label{thm:exponentpair-constraints}
Suppose $q$ has a divisor of size $Q$, and the complementary divisor of $q$ has only prime factors at most $q^\eta$ for any $\eta>0$.
Then $(\kappa,\lambda,\nu)$ is an arithmetic exponent pair, provided that one of the following conditions holds$:$
\par
{\rm (a)} $(\kappa,\lambda,\nu)=A_2^k\cdot(\frac{1}{2},\frac{1}{2},1)$ for some $k\geqslant1$ and
\begin{align}\label{eq:exponentpair-A-constraints}
q^{1-2\kappa}N^{2\kappa-2\lambda+1}\geqslant Q.\end{align}
\par
{\rm (b)} 
$(\kappa,\lambda,\nu)=BA_2^k\cdot(\frac{1}{2},\frac{1}{2},1)$ for some $k\geqslant1$ and
 \begin{align}\label{eq:exponentpair-B-constraints}
q^{2-2\lambda}N^{2\lambda-2\kappa-1}\geqslant Q.\end{align}
\end{theorem}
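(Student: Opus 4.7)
The plan is to follow the proof of Theorem \ref{thm:exponentpair-A-optimization} inductively, tracking at each stage a single constraint, namely that the rigid factor $Q$ can be absorbed into the first piece $q_1$ of the splitting $q=q_1 q_2$. Writing $q = Q \cdot q^\flat$ with $q^\flat$ admitting sufficiently good factorizations, for any target size $q_1 \geqslant Q$ we may set $q_1 = Q \cdot r_1$ with $r_1$ a divisor of $q^\flat$ of size $q_1/Q$, and $q_2 = q^\flat/r_1$; the coprimality is automatic.

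For case (a) I would proceed by induction on $k \geqslant 1$. The base case $k=1$ is a direct invocation of Theorem \ref{thm:exponentpair-A-optimization} starting from $(\tfrac12,\tfrac12,\tfrac12)$, producing the split $q_1 = q^{2/3}$; the feasibility condition $q_1 \geqslant Q$ agrees with the stated inequality since for $(\kappa_1,\lambda_1)=A(\tfrac12,\tfrac12)=(\tfrac16,\tfrac23)$ one computes $q^{1-2\kappa_1}N^{2\kappa_1-2\lambda_1+1}=q^{2/3}$. For the inductive step, set $(\kappa',\lambda',\nu') := A^{-1}(\kappa,\lambda,\nu) = A^{k-1}(\tfrac12,\tfrac12,\tfrac12)$; the inductive hypothesis furnishes $(\boldsymbol{\varOmega})$ with this pair, and a further application of Theorem \ref{thm:exponentpair-A-optimization} prescribes the split of size $q_1 = q^{1/(\kappa'+1)} N^{(\kappa'-\lambda')/(\kappa'+1)}$. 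Using the closed-form inverse $A^{-1}(\kappa,\lambda,\nu) = \bigl(2\kappa/(1-2\kappa),\,(2\lambda-1)/(1-2\kappa),\,2\nu-\tfrac12\bigr)$ recalled before the theorem, the identities $1/(\kappa'+1) = 1-2\kappa$ and $(\kappa'-\lambda')/(\kappa'+1) = 2\kappa - 2\lambda + 1$ are immediate, so the admissibility condition $q_1 \geqslant Q$ becomes exactly the stated inequality.

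For case (b) the concluding $B$-process (Theorem \ref{thm:exponentpair-B}) does not alter the factorization of the modulus and hence imposes no new restriction; the entire constraint is inherited from the preceding $A^k$-stage, i.e.\ from case (a) applied to the pair $A^k(\tfrac12,\tfrac12,\tfrac12)$. A routine verification from the formula of Theorem \ref{thm:exponentpair-B} shows that $B$ is an involution, so this preceding pair equals $B(\kappa,\lambda,\nu) = \bigl(\lambda - \tfrac12,\,\kappa + \tfrac12,\,\nu + \lambda - \kappa - \tfrac12\bigr)$. Substituting these values in place of $(\kappa,\lambda)$ in the constraint of case (a) and simplifying yields $q^{2-2\lambda} N^{2\lambda - 2\kappa - 1} \geqslant Q$, as claimed.

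The principal obstacle, I expect, lies in the inductive step of case (a): the inner differenced sum produced by one $A$-process has modulus $q_1$, and invoking the inductive hypothesis for $(\kappa',\lambda',\nu')$ at this reduced modulus requires the analogous inequality $q_1^{1-2\kappa'} N^{2\kappa'-2\lambda'+1} \geqslant Q$ inherited from depth $k-1$, together with the preservation of the ``fixed factor plus good-factorizable complement'' structure. The latter is clear since $q_1 = Q \cdot r_1$ with $r_1 \mid q^\flat$. The former reduces, after unwinding with the formula for $A^{-1}$, to a chain of inequalities in $q$, $N$ and $Q$, which I would handle by tracing the iterated relations explicitly and checking that, in the parameter ranges relevant to the applications of Sections \ref{sec:quadraticBT} and \ref{sec:tauAPs-subconvexity}, the stated outer inequality is the binding one throughout the iteration.
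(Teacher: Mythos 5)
Your route is precisely the paper's own: the paper's entire justification is the short paragraph preceding the theorem, which imposes the single condition $q_1\geqslant Q$ on the outermost split \eqref{eq:q1q2}, inverts the $A$-map via $A^{-1}(\kappa,\lambda,\nu)=\bigl(2\kappa/(1-2\kappa),(2\lambda-1)/(1-2\kappa),2\nu-\tfrac12\bigr)$ to rewrite the optimal $q_1=q^{1/(\kappa'+1)}N^{(\kappa'-\lambda')/(\kappa'+1)}$ as $q^{1-2\kappa}N^{2\kappa-2\lambda+1}$, and for case (b) formally substitutes $(\lambda-\tfrac12,\kappa+\tfrac12)$ into the constraint of case (a). Your identities $1/(\kappa'+1)=1-2\kappa$ and $(\kappa'-\lambda')/(\kappa'+1)=2\kappa-2\lambda+1$, and the involutivity of $B$, are all correct and match the paper exactly.

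The ``principal obstacle'' you flag at the end is genuine, but your proposed resolution points the wrong way, and this is where the argument (yours and the paper's alike) does not close. Once $Q$ is lodged inside $q_1$, the differenced inner sum, of modulus a divisor of $q_1$ and of length still $\approx N$, must be estimated with $A^{k-1}(\tfrac12,\tfrac12,\tfrac12)$ in the presence of the same rigid factor $Q$; by your own induction hypothesis this needs $q_1^{1-2\kappa'}N^{2\kappa'-2\lambda'+1}\geqslant Q$. Since the moduli shrink with depth while $N$ does not, these inner conditions are successively \emph{stronger} than the outermost one, not weaker: for $k=2$ the outer condition is $q^{6/7}N^{-3/7}\geqslant Q$ while the depth-two condition is $q_1^{2/3}=q^{4/7}N^{-2/7}\geqslant Q$, and the latter fails to follow from the former (take $N=Q=q^{1/2}$). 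So the claim that ``the stated outer inequality is the binding one throughout the iteration'' is false, and the induction does not close as written; one can sometimes escape by placing $Q$ in the $q_2$-piece at some level, which terminates the recursion there, but the requirement $Q\leqslant q_2^{(j)}$ is again not implied by \eqref{eq:exponentpair-A-constraints}. A complete proof must either track the placement of $Q$ level by level or impose the innermost (strongest) condition. A second, smaller point of care in case (b): since $B$ is applied to the sum first, the $A^k$-stage acts on the dual sum of length $\approx q\delta/N$, so a careful derivation should evaluate the case-(a) constraint at that length rather than at $N$; your formal substitution keeps $N$, which again reproduces the paper's statement but is not justified as written.
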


One can also obtain relevant restrictions for other types of $(\kappa,\lambda,\nu)$.

\begin{remark}
In practical applications, Theorem \ref{thm:exponentpair-constraints} can serve as an alternative to Theorems \ref{thm:exponentpair-A1-optimization} and \ref{thm:exponentpair-B}. More precisely, one can switch the role of $\delta$ in different situations. For instance, if the size of $\delta$ is not too big, one can utilize 
Theorem \ref{thm:exponentpair-A1-optimization} or Corollary \ref{corollary:exponentpair-A1-optimization}. 
However,
if $\delta$ is reasonably large, it is not a good choice to shift by $q_{J+1}\delta$ as 
in the proof of Theorem \ref{thm:exponentpair-A1-optimization}, and it is better to leave $\delta$
in its place and one has to display $W_\delta$ as the deformation factor. Of course,
if $W_\delta$
satisfies certain oscillations, we can pick out the squarefree part $\delta^\flat$ of $\delta$ and also the reduction of $W_\delta\bmod{\delta^\flat}$ to form the new 
trace functions, in which case Theorem \ref{thm:exponentpair-constraints} applies since part of the modulus does not allow sufficiently good factorizations.
One can find this observation in the later proof of Theorem \ref{thm:Brun-Titchmarsh}. 
\end{remark}

\smallskip

\section{Explicit estimates for sums of trace functions}\label{sec:explicitestimates}
In this section, we give some explicit estimates for averages of algebraic trace functions.
In particular, we display the dependence of the upper bound on divisors of $q$.
In what follows, we assume $q=q_1q_2\cdots q_J$ with $J\geqslant2.$

The first estimate can be compared with Proposition
\ref{prop:A-J>=1}. 
\begin{theorem}[$A^JB$-estimate]\label{thm:S(W,K)-A^kB}
Assume $K$ is compositely $J$-amiable with $J\geqslant2$. For $|I|\ll (q\delta)^{O(1)}$, we have
\begin{align*}
\mathfrak{S}(K, W)
&\ll_{J,\varepsilon,\fc} |I|^{1+\varepsilon}
\bigg\{\frac{\|\widehat{W}_\delta\|_{\infty}}{\sqrt{q\delta}}\omega_0+\|\widehat{W}_\delta\|_{\infty}\sqrt{\frac{q_J\delta}{|I|}}+\|W_\delta\|_{\infty}\sum_{j=2}^{J-1}\Big(\frac{q_{J+1-j}}{|I|}\Big)^{2^{-j}}\\
&\ \ \ \ \ \ \ \ \ \ \ \ \ +\|W_\delta\|_{\infty}\Big(\frac{q_1}{|I|^2}\Big)^{2^{-J}}\bigg\},
\end{align*}
where $\omega_0=1$ if $|I|\geqslant q\delta$ and vanishes otherwise. 
\end{theorem}

\begin{proof}[Sketch of the proof]
The case $\omega_0=0$ follows from Proposition
\ref{prop:A-J>=1} immediately. In the case $\omega_0=1$, we may split the summation by periodicity: There are $[|I|/q\delta]$ complete intervals of length $q\delta$, each of which can contribute a complete exponential sum mod $q\delta$. The remaining part is of length $|I|-q\delta[|I|/q\delta]$, which would be equal to $|I|$ if $\omega_0=0.$ Each above complete sum is equal to
\begin{align*}
\sum_{a\bmod{q\delta}} K(a) W_\delta(a)
= \Big\{\prod_{p\mid q}\sum_{a\bmod p}K_p(a)\Big\} \Big\{\sum_{a\bmod\delta} W_\delta(a)\Big\}
\ll (q\delta)^{1/2+\varepsilon} \|\widehat{W}_\delta\|_{\infty}
\end{align*}
by Riemann hypothesis for $K_p$ (Proposition \ref{prop:RH}).
The total contribution from all complete sums is then
\begin{align*}
\ll [|I|/(q\delta)] (q\delta)^{1/2+\varepsilon} \|\widehat{W}_\delta\|_{\infty}
\ll |I| (q\delta)^{-1/2+\varepsilon} \|\widehat{W}_\delta\|_{\infty}
\end{align*}
as expected.
\end{proof}

\begin{remark}
The case $\delta=1$ in Theorem \ref{thm:S(W,K)-A^kB} was claimed in \cite[Section 6]{Po14} without proof. 
Heath-Brown \cite{HB01} obtained, in the case $K(n)=\mathrm{e}(f_1(n)\overline{f_2(n)}/q)$ with $f_1,f_2\in\bZ[X],$ a similar estimate with an extra term roughly of the shape $|I|^{1+\varepsilon} q_1^{2^{-J-1}} \|\widehat{W}_\delta\|_{\infty}$.
\end{remark}

We have an alternative estimate following Theorem \ref{thm:exponentpair-A2}. 
\begin{theorem}[$A^JB$-estimate]\label{thm:S(W,K)-A^kB-general}
Assume $K$ is compositely $J$-amiable with $J\geqslant2$. For $|I|\ll (q\delta)^{O(1)}$, we have
\begin{align*}
\mathfrak{S}(K, W)
&\ll_{J,\varepsilon,\fc} |I|^{1+\varepsilon}
\bigg\{\frac{\|\widehat{W}_\delta\|_{\infty}}{\sqrt{q\delta}}\omega_0+\|\widehat{W}_\delta\|_{\infty}\sqrt{\frac{q_J}{|I|}}+\|W_\delta\|_{\infty}\delta^{\frac{1}{4}}\sum_{j=2}^{J-1}\Big(\frac{q_{J+1-j}}{|I|}\Big)^{2^{-j}}\\
&\ \ \ \ \ \ \ \ \ \ \ \ \ \ +\|W_\delta\|_{\infty}\sqrt{\delta}\Big(\frac{q_1}{|I|^2}\Big)^{2^{-J}}\bigg\},
\end{align*}
where $\omega_0=1$ if $|I|\geqslant q\delta$ and vanishes otherwise.
\end{theorem}

Some alternative estimates could be obtained by making some initial transformations before applying the $A^JB$-estimate. 
\begin{theorem}[$BA^JB$-estimate]\label{thm:S(W,K)-BA^kB}
Assume the Fourier transform $\widehat{K}$ is compositely $J$-amiable with $J\geqslant2$. For $|I|\ll (q\delta)^{O(1)}$, we have
\begin{align*}
\mathfrak{S}(K, W)
&\ll_{J,\varepsilon,\fc}\|\widehat{W}_\delta\|_{\infty}(q\delta)^{\frac{1}{2}+\varepsilon}
\bigg\{\frac{|I|}{q\delta}\omega_0+\sqrt{\frac{|I| q_J}{q}}+\sum_{j=2}^{J-1}\Big(\frac{|I| q_{J+1-j}}{q\delta}\Big)^{2^{-j}}+\Big(\frac{|I|^2q_1}{(q\delta)^2}\Big)^{2^{-J}}\bigg\},
\end{align*}
where $\omega_0=1$ if $|I|\geqslant q\delta$ and vanishes otherwise.
\end{theorem}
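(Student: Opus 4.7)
The plan is to derive Theorem \ref{thm:S(W,K)-BA^kB} by applying one $B$-process at the start to convert $\mathfrak{S}(K,W)$ into a short dual sum, and then feeding the resulting sum into the $A^kB$-estimate of Theorem \ref{thm:S(W,K)-A^kB}. First I would dispose of the regime $|I| > q\delta$ (which supplies the $\omega_0$-term) by the same splitting argument that appears in the sketch of Theorem \ref{thm:S(W,K)-A^kB}: cut $I$ into $\lfloor|I|/(q\delta)\rfloor$ complete periods of length $q\delta$, on each of which the Chinese Remainder Theorem factors the complete sum into $\widehat{K}(0)\widehat{W}_\delta(0)\sqrt{q\delta}$, which is $\ll q^\varepsilon \sqrt{q\delta}\,\|\widehat{W}_\delta\|_\infty$ by Proposition \ref{prop:RH}. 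Summing over the $\lfloor|I|/(q\delta)\rfloor$ periods yields the contribution $|I|(q\delta)^{-1}\cdot\sqrt{q\delta}\,\|\widehat{W}_\delta\|_\infty = \sqrt{q\delta}\,\|\widehat{W}_\delta\|_\infty\cdot|I|/(q\delta)$, exactly the first $\omega_0$-term. The leftover incomplete piece has length $\leq q\delta$ and can be bounded by the main argument below (Case $|I|\leq q\delta$), giving a smaller contribution.

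For the main case $|I|\leq q\delta$ (so that $\omega_0=0$), I would apply Lemma \ref{lm:Bprocess} with $\Psi=K\cdot W_\delta$ mod $q\delta$. Using $\widehat{\Psi}(h)=\widehat{K}(\overline{\delta}h)\widehat{W}_\delta(\overline{q}h)$ via CRT and the reciprocity identity $\mathrm{e}(hM/(q\delta))=\mathrm{e}(hM\overline{q}/\delta)\mathrm{e}(hM\overline{\delta}/q)$, the $B$-process rewrites $\mathfrak{S}(K,W)$ as
\begin{align*}
\mathfrak{S}(K,W)\ll \frac{|I|}{\sqrt{q\delta}}\Bigl(q^\varepsilon\|\widehat{W}_\delta\|_\infty+\log(q\delta)\bigl|\mathfrak{S}(\widetilde K,\widetilde W)\bigr|\Bigr),
\end{align*}
where $\mathfrak{S}(\widetilde K,\widetilde W)$ is a new sum of length $N':=q\delta/|I|$, with new trace function $\widetilde K(h):=\widehat K(\overline{\delta}h)\mathrm{e}(hM\overline{\delta}/q)$ mod $q$ and new deformation factor $\widetilde W(h):=\widehat{W}_\delta(\overline{q}h)\mathrm{e}(hM\overline{q}/\delta)$ mod $\delta$. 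Since $\widehat K$ is $J$-amiable by hypothesis and twisting by the linear phase $\mathrm{e}(\cdot\overline{\delta}/q)$ preserves $J$-amiability, the function $\widetilde K$ inherits $J$-amiability with a conductor bounded in terms of $\fc$.

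To fit $\widetilde W$ into the $A^kB$-framework, I would renormalize: set $\widetilde W':=\widetilde W/\|\widehat{W}_\delta\|_\infty$, so that $\|\widetilde W'\|_\infty\le 1$. A direct computation using Fourier inversion on $\bZ/\delta\bZ$ shows $\widehat{\widetilde W}(y)=W_\delta(M-qy)$, whence $\|\widehat{\widetilde W'}\|_\infty\leq \|W_\delta\|_\infty/\|\widehat{W}_\delta\|_\infty\leq 1$ (exploiting $\|\widehat{W}_\delta\|_\infty\gg 1$ via the uncertainty-type Lemma \ref{lm:uncertainty}). Now Theorem \ref{thm:S(W,K)-A^kB} applied to $\mathfrak{S}(\widetilde K,\widetilde W')$ with interval length $N'$ gives
\begin{align*}
\mathfrak{S}(\widetilde K,\widetilde W')\ll (N')^{1+\varepsilon}\Bigl\{\sum_{j=1}^{J-1}(q_{J+1-j}/N')^{2^{-j}}+(\delta^2 q_1/N'^2)^{2^{-J}}\Bigr\},
\end{align*}
(the $\omega_0'$-term being zero since $N'\leq q\delta$). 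Multiplying back by $\|\widehat{W}_\delta\|_\infty$ and by the $B$-prefactor $|I|/\sqrt{q\delta}$, and substituting $N'=q\delta/|I|$, turns the two surviving terms into $\sqrt{q\delta}\,\|\widehat{W}_\delta\|_\infty(|I|q_{J+1-j}/(q\delta))^{2^{-j}}$ and $\sqrt{q\delta}\,\|\widehat{W}_\delta\|_\infty(|I|^2q_1/q^2)^{2^{-J}}$ respectively, matching the claimed bound.

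The main technical obstacle is the bookkeeping around the normalization of $\widetilde W$: one must verify the identity $\widehat{\widetilde W}(y)=W_\delta(M-qy)$ carefully (so that the $\|\widehat{W}_\delta\|_\infty$-factor that is pulled out in front is \emph{not} paid back inside the $A^kB$-estimate), and verify that $\widehat K$ tensored with a rank-one Artin–Schreier sheaf preserves the $J$-amiability of each local factor $\widehat{K_p}\circ(\cdot\,\overline{Q_p})$, so that Theorem \ref{thm:S(W,K)-A^kB} genuinely applies to the composite trace function $\widetilde K$ mod $q$. Both of these are essentially formal but require care with the CRT conventions and with the definitions in Section \ref{sec:tracefunction}.
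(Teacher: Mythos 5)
Your proposal is correct and follows exactly the route the paper indicates (the paper gives no proof of this theorem beyond the remark that it follows from ``initial transformations before applying the $A^kB$-estimate''): one $B$-process as in the proof of Theorem \ref{thm:exponentpair-B}, producing a dual sum of length $q\delta/|I|$ with trace function built from $\widehat{K}$ and deformation factor whose Fourier transform is controlled via Lemma \ref{lm:uncertainty}, followed by Theorem \ref{thm:S(W,K)-A^kB}; your substitution $N'=q\delta/|I|$ reproduces each term of the stated bound. The only point you leave implicit is that the zero-frequency contribution $|I|(q\delta)^{-1/2+\varepsilon}\|\widehat{W}_\delta\|_{\infty}$ in the range $|I|\leqslant q\delta$ is dominated by the last term, which follows since $|I|/(q\delta)\leqslant(|I|^2q_1/q^2)^{2^{-J}}$ there.
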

\begin{remark}
 Irving \cite{Ir16} has beaten the Weyl bound for Dirichlet $L$-functions using estimates for character sums that are of the same strength with Theorem \ref{thm:S(W,K)-BA^kB} with $K(n)=\chi(n)\overline{\chi}(n+h),~h\neq0$. 
\end{remark}

\smallskip

\section{Proof of Theorem \ref{thm:Brun-Titchmarsh}: Quadratic Brun--Titchmarsh theorem}\label{sec:quadraticBT}

We complete the proof of Theorem \ref{thm:Brun-Titchmarsh} in this section. 

\subsection{Linear sieves}
We introduce some conventions and fundamental results on the linear Rosser--Iwaniec sieve.
Let $\cA=(a_n)$ be a finite sequence of integers and $\cP$ a set of prime numbers. Define the sifting function
\begin{align*}S(\cA,\cP,z)=\sum_{(n,P(z))=1}a_n
\quad\text{with}\quad
P(z)=\prod_{p<z, \, p\in\cP}p.
\end{align*}
For squarefree $d$ with all its prime factors belonging to $\cP$, 
we consider the subsequence $\cA_d=(a_n)_{n\equiv 0\!\bmod{d}}$ and 
the congruence sum
\begin{align*}A_d=\sum_{n\equiv 0\!\bmod{d}}a_n.\end{align*}
We assume that $\cA_d$ is well located in $\cA$ in the following sense: 
There are an appropriate approximation $\mathfrak{M}$ to $A_1$ and 
a multiplicative function $g$ supported on squarefree numbers with all its prime factors belonging to $\cP$ verifying
\[0<g(p)<1\quad(p\in\cP)\]
such that
\par
(a) the remainder 
\[r(\cA,d)=A_d-g(d)\mathfrak{M}\]
is small on average over $d\mid P(z)$;

(b) there exists a constant $L>1$ such that
\begin{align*}
\frac{V(z_1)}{V(z_2)}
\leqslant\frac{\log z_2}{\log z_1}\bigg(1+\frac{L}{\log z_1}\bigg)
\quad\text{with}\quad
V(z)=\prod_{p<z, \, p\in\cP}(1-g(p))
\end{align*}
for $2\leqslant z_1< z_2$.

Let $F$ and $f$ be the continuous solutions to the system
\begin{align}\label{eq:sieve-F}
\begin{cases}
sF(s) =2\mathrm{e}^\gamma & \text{for $\,0< s\leqslant 2$},
\\
sf(s)=0                                    & \text{for $\,0<s\le 2$},
\\
(sF(s))'=f(s-1)                         & \text{for $\,s>2$},
\\
(sf(s))'=F(s-1)                         & \text{for $\,s>2$},
\end{cases}
\end{align}
where $\gamma$ is the Euler constant.

For $k\ge 1$, denote by $\tau_k(n)$ the number of ways of expressing $n$ as
the product of $k$ positive integers. An arithmetic function $\lambda(d)$ is of {\it level}
$D$ and {\it order} $k$, if
\begin{align*}
\lambda(d)=0
\quad(d>D)
\qquad\text{and}\qquad
|\lambda(d)|\leqslant\tau_k(d)
\quad
(d\leqslant D).
\end{align*}
Let $r\geqslant2$ be a positive integer. We say that $\lambda$ is {\it well-factorable of degree $r$}, if for every decomposition $D = D_1D_2\cdots D_r$ with $D_1, D_2, \dots, D_r\geqslant1,$ there exist $r$ arithmetic functions 
$\lambda_1, \lambda_2, \dots, \lambda_r$ such that
\begin{align*}
\lambda=\lambda_1*\lambda_2*\cdots *\lambda_r\end{align*}
with each $\lambda_j$ of level $D_j$ and order $k$.

We now state the following fundamental result of Iwaniec \cite{Iw80}.

\begin{lemma}\label{lm:Iwaniec}
Let $r\ge 2$ and $0<\varepsilon<\tfrac{1}{8}$.
Under the above hypothesis, we have
\begin{align*}
S(\cA,\cP,z)
\leqslant \mathfrak{M}V(z)\bigg\{F\bigg(\frac{\log D}{\log z}\bigg)+E\bigg\}
+\sum_{t\leqslant T}\sum_{d\mid P(z)}\lambda_t(d) r(\cA,d)
\end{align*}
for all $z\geqslant 2$,
where $F$ is given by the system \eqref{eq:sieve-F}, 
$T$ depends only on $\varepsilon$, 
$\lambda_t(d)$ is well-factorable of level $D,$ degree $r$ and order $1$, 
and $E\ll\varepsilon +\varepsilon^{-8} \mathrm{e}^L(\log D)^{-1/3}.$
\end{lemma}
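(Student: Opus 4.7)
The plan is to derive this from the combinatorial upper-bound sieve of Rosser and then refine the sieve weights so they acquire a well-factorable structure. The starting point is the Rosser inequality
\[S(\cA,\cP,z) \leq \sum_{d\mid P(z)} \lambda^+(d) A_d,\]
where $\lambda^+$ is Rosser's truncation: $\lambda^+(d) = \mu(d)$ on squarefree $d = p_1 p_2 \cdots p_s \leq D$ (with $p_1 > p_2 > \cdots > p_s$) satisfying $p_1 p_2 \cdots p_{2\ell} p_{2\ell+1}^3 \leq D$ for every admissible $\ell$, and $\lambda^+(d)=0$ otherwise. Separating the density $g(d) X$ from $A_d$ and applying hypothesis (b) yields the main term $XV(z) F(\log D / \log z)$ together with the classical Rosser error $O(\mathrm{e}^K (\log D)^{-1/3})$, both of which are absorbed into $E$. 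What remains to be handled is $\sum_{d\mid P(z)} \lambda^+(d) r(\cA,d)$.

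Next, I would partition $\lambda^+$ into $T = T(\varepsilon)$ pieces $\lambda^{(\nu)}$ according to the dyadic locations of the prime factors of $d$. Fix $\eta$ comparable to $\varepsilon$ and cover the prime range by intervals $I_k = [(1+\eta)^k, (1+\eta)^{k+1})$; each piece $\lambda^{(\nu)}$ is obtained by prescribing, for a fixed length $s \leq s_0(\varepsilon)$, a tuple $(k_1,\ldots,k_s)$ giving the dyadic indices of $p_1,\ldots,p_s$. The contribution of chains longer than $s_0$ is $O(\varepsilon)$ (another ingredient absorbed in $E$), and the number of pieces is $T \ll \varepsilon^{-O(1)}$. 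Within a single piece, the quantities $\log p_i$ are pinned down up to additive $\eta$, so the Rosser constraints translate into constraints on the sorted index vector $(k_1, \ldots, k_s)$ rather than on the primes individually.

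For the well-factorability of each $\lambda^{(\nu)}$, given any prescribed factorization $D = D_1 D_2 \cdots D_r$, I would greedily cut $\{1,\ldots,s\}$ into consecutive blocks $B_1, \ldots, B_r$ such that $\prod_{i \in B_j} p_i \leq D_j (1+\eta)^{O(1)}$, and set $\lambda_j^{(\nu)}$ to be supported on the products over $B_j$ with $|\lambda_j^{(\nu)}| \leq 1$. Because the dyadic indices are prescribed inside a fixed piece, the greedy cuts are essentially canonical and the convolution
\[\lambda^{(\nu)} = \lambda_1^{(\nu)} * \lambda_2^{(\nu)} * \cdots * \lambda_r^{(\nu)}\]
reproduces $\lambda^{(\nu)}$.

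The hard part is absorbing the $(1+\eta)^{O(1)}$ slack: a greedy cut may force some $d_j$ to marginally exceed $D_j$, and this can clash with a Rosser monotonicity constraint $p_1 \cdots p_{2\ell} p_{2\ell+1}^3 \leq D$ whose window straddles several blocks, since the cascade of such constraints tightly couples primes at vastly different dyadic scales. Iwaniec's device to overcome this is to reassign borderline primes to an adjacent block via an additional finer partition of the sieve coefficients into $\varepsilon^{-O(1)}$ sub-pieces, adjusting the monotonicity inequalities block-by-block. The accumulated cost contributes precisely the term $\varepsilon^{-8} \mathrm{e}^K (\log D)^{-1/3}$ to $E$; verifying that this bookkeeping is compatible with every possible prescribed tuple $(D_1,\ldots,D_r)$ simultaneously, uniformly in $r$, is the delicate combinatorial heart of the argument and the place where I expect the main obstacle to lie.
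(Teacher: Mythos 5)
First, note that the paper does not prove this lemma at all: it is quoted from Iwaniec's paper \cite{Iw1}, the stated form being obtained from Theorem 4 there by iterating Lemma 1 of that paper to pass from well-factorability of degree $2$ to degree $r$. So your proposal is really a reconstruction of Iwaniec's argument, and its overall architecture is the right one: Rosser's weights $\lambda^+$, the standard main-term analysis giving $XV(z)F(\log D/\log z)$ plus the $\mathrm{e}^{K}(\log D)^{-1/3}$-type error, and a decomposition of $\lambda^+$ into $T(\varepsilon)$ pieces by localizing the prime factors of $d$ into short multiplicative intervals so that the support conditions become conditions on the localization indices alone.

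The genuine gap is in the combinatorial step. You propose to realize the factorization $d=d_1\cdots d_r$ with $d_j\leqslant D_j$ by cutting the sorted prime factors $p_1>p_2>\cdots>p_s$ into \emph{consecutive} blocks. This provably fails, already for $r=2$. Take $D_1=D_2=D^{1/2}$ and
\begin{align*}
p_1=D^{0.33},\quad p_2=D^{0.32},\quad p_3=D^{0.1},\quad p_4=D^{0.08},\quad p_5=D^{0.05},
\end{align*}
which satisfy $d=p_1\cdots p_5=D^{0.88}\leqslant D$ and all Rosser conditions $p_1\cdots p_{2\ell}p_{2\ell+1}^3\leqslant D$ (the exponents are $0.99$, $0.95$, $0.98$). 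No consecutive split works: $p_1p_2=D^{0.65}$ and $p_2p_3p_4p_5=D^{0.55}$ both exceed $D^{1/2}$, so every cut point leaves one block too large; yet the non-consecutive partition $\{p_1,p_3\}\cup\{p_2,p_4,p_5\}$ gives $D^{0.43}$ and $D^{0.45}$. This is exactly the content of Iwaniec's Lemma 1: the Rosser support conditions guarantee, for \emph{every} factorization $D=D_1D_2$, a partition of the prime factors into two generally non-consecutive subsets with products below $D_1$ and $D_2$, proved by an induction that interleaves primes between the two bins. The localization into $T(\varepsilon)$ pieces is then what makes this partition depend only on the piece $\nu$ (so that each piece really is a convolution $\lambda_1*\cdots*\lambda_r$), not a device for absorbing "borderline" primes as you describe. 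Without the correct partition lemma the well-factorability claim — the whole point of the statement — is unproved, so you should either prove Lemma 1 of \cite{Iw1} or, as the paper does, cite it.
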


Moreover, in Iwaniec's construction of $\lambda_t(d)$ one may see that $d$ has only small prime factors, and you may decompose $d$ into the product of certain divisors with suitable sizes. 
As mentioned before, such well-factorization of $d$, or equivalently the well-factorization of $\lambda_t(d)$,
is essential in estimates for exponential sums arising in the proof of Theorem \ref{thm:Brun-Titchmarsh}. The original statement in \cite[Theorem 4]{Iw80} is a bit different, and it implies Lemma \ref{lm:Iwaniec} by combining an iterative application of Lemma 1 therein. 

\subsection{Quadratic congruences}
Before starting the proof of Theorem \ref{thm:Brun-Titchmarsh}, we recall one classical result of Gau{\ss} on the representation of numbers by binary
quadratic forms. We refer to {\it Disquisitiones Arithmeticae} or Smith's
report \cite{Sm65} for a very clear description of this theory in a form suitable for our
purpose.

\begin{lemma}\label{lm:Gausslemma}
 Let $\ell\geqslant 1.$ If
\begin{align}\label{eq:cong}
a^2+1\equiv0 \bmod{\ell}
\end{align}
is solvable for $a\bmod{\ell}$, then $\ell$ can be represented properly as a sum of two squares
\begin{align}\label{eq:form}
\ell = r^2+s^2,\quad (r, s)=1, \quad r>0, \quad s>0.
\end{align}
There is a one to one correspondence between the incongruent solutions $a\bmod{\ell}$ to \eqref{eq:cong} 
and the solutions $(r, s)$ to \eqref{eq:form} given by
\[\frac{a}{\ell}=\frac{\overline{s}}{r}-\frac{s}{r(r^2+s^2)}.\]

For each integer $d\ge 1,$ we put $d=d_1d_2$ with $d_2=(d,r^{\infty})$. Then
\begin{align}\label{eq:da/l}
\frac{\overline{d}a}{\ell}&\equiv -\frac{r\overline{d_2(r^2+s^2)}}{d_1s}+ \frac{r}{ds(r^2+s^2)}- \frac{r\overline{d_1s(r^2+s^2)}}{d_2}\; \bmod{1}.
\end{align}
In particular, if $d$ is squarefree, we have
\begin{align*}
\frac{\overline{d}a}{\ell}&\equiv -\frac{r\overline{d_2(r^2+s^2)}}{d_1s}+ \frac{r}{ds(r^2+s^2)}\; \bmod{1}.
\end{align*}
All the mod inverses are well-defined with respect to relevant denominators without special precision.
\end{lemma}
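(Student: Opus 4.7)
This is Gauss' classical description of solutions to $a^2 \equiv -1 \pmod \ell$ via representations $\ell = r^2 + s^2$, together with a rational partial-fraction expansion of $\bar d a/\ell$; the plan is to verify the three assertions in turn. For the existence of a primitive representation I would invoke Thue's lemma: given $a$ with $a^2 + 1 \equiv 0 \pmod \ell$, the pigeonhole principle applied to $(x, y) \mapsto y - ax \bmod \ell$ on $\{(x, y) : 0 \leq x, y \leq \lfloor \sqrt{\ell} \rfloor\}$ produces a nonzero $(r, s)$ with $|r|, |s| \leq \sqrt{\ell}$ and $s \equiv \pm a r \pmod \ell$, whence $\ell \mid r^2 + s^2$ and $0 < r^2 + s^2 \leq 2\ell$, forcing $r^2 + s^2 = \ell$ apart from the trivial case $\ell = 2$. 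Coprimality $\gcd(r,s) = 1$ then follows from a short case analysis (or directly from unique factorisation in $\bZ[i]$), and the signs can be normalised to $r, s > 0$.

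For the correspondence and the identity $a/\ell \equiv \bar s/r - s/(r\ell) \pmod 1$: given $\ell = r^2 + s^2$ with $(r, s) = 1$, define $a \bmod \ell$ by $ar \equiv -s \pmod \ell$; multiplying by $a$ and using $r^2 \equiv -s^2 \pmod \ell$ gives $as \equiv r \pmod \ell$, hence $a^2 \equiv -1 \pmod \ell$, while the reverse assignment is supplied by the Thue argument. Writing $ar + s = k\ell$ for an integer $k$ and reducing modulo $r$ using $\ell \equiv s^2 \pmod r$ forces $ks \equiv 1 \pmod r$, so $k \equiv \bar s \pmod r$; dividing $ar + s = k\ell$ through by $r\ell$ gives the displayed formula.

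The third identity \eqref{eq:da/l} I would derive by a Chinese Remainder partial fraction. From $as \equiv r \pmod \ell$ one has $a \equiv r \bar s \pmod \ell$ (with $\bar s$ the inverse of $s$ modulo $\ell$), so $\frac{\bar d a}{\ell} \equiv \frac{r \overline{ds}}{\ell} \pmod 1$ under the natural condition $(d, \ell) = 1$. The three integers $d_1 s$, $d_2$, and $\ell$ are then pairwise coprime: $(d_1, d_2) = 1$ by construction; $(d_2, s) = 1$ since $d_2 \mid r^\infty$ and $(r, s) = 1$; and $(d_1 s, \ell) = 1$ since $(s, r^2 + s^2) = 1$ and $(d_1, \ell)$ divides $(d, \ell) = 1$. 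CRT applied to the product $(d_1 s) \cdot d_2 \cdot \ell = ds\ell$ then yields
$$\frac{1}{ds\ell} \equiv \frac{\overline{d_2 \ell}}{d_1 s} + \frac{\overline{d_1 s \ell}}{d_2} + \frac{\overline{ds}}{\ell} \pmod 1,$$
and multiplying through by the integer $r$ and rearranging reproduces \eqref{eq:da/l}. When $d$ is squarefree, $d_2$ is squarefree and divides $r^\infty$, hence $d_2 \mid r$; then $r\,\overline{d_1 s \ell}/d_2 \in \bZ$ and the last term of \eqref{eq:da/l} vanishes modulo $1$.

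The hard part, such as it is, is bookkeeping: tracking which inverse lives modulo which denominator and verifying the pairwise coprimalities, especially because $d_2$ is allowed to share factors with $r$. Once the factorisation $d s \ell = (d_1 s) \cdot d_2 \cdot \ell$ into pairwise coprime parts is identified, the identity falls out in a single line from CRT.
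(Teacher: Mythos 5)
Your proposal is correct, and for the one part the paper actually proves in detail --- the identity \eqref{eq:da/l} --- you take a genuinely different route. The paper first writes $\overline{d}\equiv d_2^*\overline{d_1}$ with inverses taken modulo $d_1s\ell$ and $r\ell$ respectively, substitutes the expression for $a/\ell$ in terms of $r,s$, and then applies the two-term reciprocity law $\overline{v}/u+\overline{u}/v\equiv 1/(uv)\bmod 1$ twice (with $(u,v)=(d_1s,r(r^2+s^2))$ and $(d_1,r(r^2+s^2))$), after which several terms must be recombined by hand. You instead reduce $\overline{d}a/\ell$ to $r\,\overline{ds}/\ell$ via $a\equiv r\overline{s}\bmod\ell$ and then apply, in one stroke, the three-term partial-fraction identity $1/(uvw)\equiv \overline{vw}/u+\overline{uw}/v+\overline{uv}/w\bmod 1$ to the pairwise coprime factorization $ds\ell=(d_1s)\cdot d_2\cdot\ell$; multiplying by the integer $r$ gives \eqref{eq:da/l} directly, and the squarefree simplification follows from $d_2\mid r$ exactly as in the paper. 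Your coprimality checks ($(d_1,d_2)=1$, $(d_2,s)=1$ because $d_2\mid r^\infty$ and $(r,s)=1$, and $(d_1s,\ell)=(d_2,\ell)=1$) are the right ones, and the computation is arguably cleaner than the paper's. For the first two assertions (existence of the proper representation and the Gauss correspondence) the paper simply cites Smith's report and Deshouillers--Iwaniec, whereas you sketch a self-contained argument via Thue's lemma and the relation $ar\equiv -s\bmod\ell$; your derivation of $a/\ell\equiv\overline{s}/r-s/(r\ell)\bmod 1$ from $ar+s=k\ell$ and $\ell\equiv s^2\bmod r$ is correct, though the bijectivity of the correspondence is asserted rather than checked (the paper does no better, so this is not a gap relative to its standard).
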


\proof 
It suffices to prove \eqref{eq:da/l} and the remaining part can be actually found in \cite{Sm65} or \cite[Lemma 2]{DI82b}.
Note that $d_2=(d,r^{\infty})$, thus $(d_1,r\ell)=(d_2,d_1s\ell)=1.$
It follows that
\begin{align*}
\frac{\overline{d}a}{\ell}
& \equiv d_2^* \overline{d_1}\frac{a}{\ell} \; \bmod{1},\end{align*}
where $d_2^*d_2\equiv\overline{d_1}d_1\equiv1\bmod{\ell}$.
From the Chinese remainder theorem, we may choose $d_2^*,\overline{d_1}$ such that
$d_2^*d_2\equiv1\bmod{d_1s\ell}$ and $\overline{d_1}d_1\equiv1\bmod{r\ell}$.
On the other hand, we have
\begin{align*}
\frac{a}{\ell}\equiv\frac{\overline{s}(r^2+s^2)}{r(r^2+s^2)}-\frac{s}{r(r^2+s^2)} \; \bmod{1},
\end{align*}
where $s\overline{s}\equiv1\bmod{(r(r^2+s^2)}.$ 
Combining these yields
$$
\frac{\overline{d}a}{\ell}
\equiv d_2^*\bigg(\frac{\overline{d_1s}(r^2+s^2)}{r(r^2+s^2)}-\frac{s\overline{d_1}}{r(r^2+s^2)}\bigg) \; \bmod{1}.
$$
Noting that
\[\frac{\overline{v}}{u}+\frac{\overline{u}}{v}\equiv\frac{1}{uv}\;\bmod{1},\ \ (u,v)=1\]
with tuples $(u, v) = (d_1s, r(r^2+s^2))$ and $(u, v)=(d_1, r(r^2+s^2))$, we get
\begin{equation}\label{eq:congruence}
\begin{aligned}
\frac{\overline{d}a}{\ell}
& \equiv  - \frac{d_2^*\overline{r}}{d_1s}
+ \frac{d_2^*s \overline{r(r^2+s^2)}}{d_1}
+ \frac{d_2^*(r^2+s^2)}{d_1rs(r^2+s^2)}
- \frac{d_2^*s}{d_1r(r^2+s^2)} 
\; \bmod{1}.
\end{aligned}
\end{equation}

Furthermore, we have
\begin{align*}
- \frac{d_2^*\overline{r}}{d_1s}
+ \frac{d_2^*s \overline{r(r^2+s^2)}}{d_1}
& \equiv - \frac{\overline{d_2r}}{d_1s}
+ \frac{\overline{d_2r}s^2 \overline{(r^2+s^2)}}{d_1s} \; \bmod{1}
\\
& \equiv - \frac{r\overline{d_2(r^2+s^2)}}{d_1s} \; \bmod{1}
\end{align*}
and
\begin{align*}
\frac{d_2^*(r^2+s^2)}{d_1rs(r^2+s^2)}
- \frac{d_2^*s}{d_1r(r^2+s^2)} 
& = \frac{d_2^*r}{d_1s(r^2+s^2)}
\\
& \equiv r\bigg(\frac{1}{d_1d_2s(r^2+s^2)} - \frac{\overline{d_1s(r^2+s^2)}}{d_2}\bigg)
\; \bmod{1},
\end{align*}
which can be simplified to 
$$ \equiv \frac{r}{ds(r^2+s^2)}
\; \bmod{1}$$
if $d$ is squarefree, since $d_2\mid r$ in that case.
Inserting these to \eqref{eq:congruence}, we are done.
\endproof

\subsection{Initial treatment by linear sieves}

We now start the proof of Theorem \ref{thm:Brun-Titchmarsh}.
Let $Y=X^{1-\varepsilon}$ and we introduce a  smooth function $\fJ$ which is supported on $[Y,X]$, satisfying
\begin{align*}
\begin{cases}
\fJ(x)=1\ \ &\text{for } x\in[Y+1,X-1],\\
\fJ(x)\geqslant0\ \ &\text{for } x\in\bR,\\
\fJ^{(j)}\ll_j Y^{-j} &\text{for all } j\geqslant0.
\end{cases}
\end{align*}
Therefore,
\begin{align}\label{eq:Q_l(Y,X)}
Q_\ell(X)=Q_\ell(Y,X)+O(X^{\varepsilon/2}Y/\ell),
\end{align}
where
\[Q_\ell(Y,X)=\sum_{p^2+1\equiv0\bmod \ell}\fJ(p).\]
The $O$-term in \eqref{eq:Q_l(Y,X)} is acceptable since $Y=X^{1-\varepsilon}$ and all the following treatments are operated over $Q_\ell(Y,X).$

The initial step is to transform sums over primes to those over integers via linear sieves. To do so, we consider the congruence sum
\begin{align*}
A_d(Y,X; \ell) 
:= \sum_{\substack{n\geqslant1\\n^2+1\equiv0\!\bmod{\ell}\\n\equiv 0\!\bmod{d}}} \fJ(n)
= \sum_{\substack{n\geqslant1\\d^2n^2+1\equiv0\!\bmod{\ell}}} \fJ(dn).
\end{align*}

To transform the sum over $n$, we would like to appeal to the following Poisson summation formula.
\begin{lemma}\label{lm:Poisson}
Let $g$ be a smooth function with compact support on $\bR.$ 
For $X\geqslant1$ and $q\geqslant1,$ we have
\begin{align*}
\sum_{n\equiv a\!\bmod{q}}g\Big(\frac{n}{X}\Big)
= \frac{X}{q} \sum_{h\in\bZ} \widehat{g}\Big(\frac{hX}{q}\Big) \mathrm{e}\Big(\frac{ha}{q}\Big).
\end{align*}
\end{lemma}
From Lemma \ref{lm:Poisson} it follows that
\begin{align*}
A_d(Y,X; \ell) 
= \frac{1}{d\ell} \sum_{h} \widehat{\fJ}\Big(\frac{h}{d\ell}\Big)
\sum_{\substack{a\bmod{\ell}\\a^2+1\equiv0\!\bmod{\ell}}} \mathrm{e}\Big(\frac{a\overline{d}h}{\ell}\Big),
\end{align*}
where we have used the implicit condition $(d, \ell)=1$. 
The zero-th frequency is expected to produce the main contribution, i.e., $\widehat{\fJ}(0)\varrho(\ell)(d\ell)^{-1}$. 
Define
\begin{align*}
r_d(Y,X; \ell)
& := A_d(Y,X; \ell)-\widehat{\fJ}(0)\frac{\varrho(\ell)}{d\ell}= \frac{1}{d\ell}\sum_{h\neq0} \widehat{\fJ}\Big(\frac{h}{d\ell}\Big)
\sum_{\substack{a\bmod{\ell}\\a^2+1\equiv0\!\bmod{\ell}}} \mathrm{e}\Big(\frac{a\overline{d}h}{\ell}\Big).
\end{align*}

By the M\"obius formula, we have
\begin{align}\label{eq:Q_l(X)-sieve}
Q_{\ell}(Y,X) 
= \sum_{n^2+1\equiv0\!\bmod{\ell}} \fJ(n) \sum_{d\mid (n, P(z))} \mu(d)
\qquad
\big(z=\sqrt{X}\big).
\end{align}
Let $(\lambda_d)$ be a linear upper bound sieve of level $D$, so that $1*\mu\leqslant1*\lambda$. 
Then 
\begin{align*}
Q_{\ell}(Y,X)
\leqslant \sum_{\substack{d\leqslant D\\d\mid P(z)}} \lambda_d A_d(Y,X; \ell)
\leqslant \widehat{\fJ}(0)F\bigg(\frac{\log D}{\log z}\bigg)
+ \sum_{\substack{d\leqslant D\\d\mid P(z)}} \lambda_d r_d(Y, X; \ell),
\end{align*}
where $F$ is defined by \eqref{eq:sieve-F} and
\begin{align*}
V(z) 
:= \prod_{p<z, \, p\nmid \ell} \bigg(1-\frac{1}{p}\bigg)
= \frac{\ell}{\varphi(\ell)} \cdot \frac{\mathrm{e}^{-\gamma}}{\log z}\bigg\{1+O\bigg(\frac{1}{\log X}\bigg)\bigg\}.
\end{align*}
Thanks to Lemma \ref{lm:Iwaniec}, we may choose well-factorable remainder terms in the above application of linear Rosser--Iwaniec sieve. We thus conclude the following result by noting that $\widehat{\fJ}(0)=X+O(X^{1-\varepsilon}).$

\begin{proposition}\label{prop:Q_l(X)bound}
Let $\varepsilon>0$ and $D<X.$ For any given $J\geqslant 2,$ we have
\begin{align*}
Q_{\ell}(X)
\leqslant \{2+O(\varepsilon)\}\frac{\varrho(\ell)}{\varphi(\ell)}\frac{X}{\log D}+\sum_{t\leqslant T(\varepsilon)}\sum_{\substack{d\leqslant D\\d\mid P(z)}}\lambda_t(d) r_d(Y,X; \ell),\end{align*}
where $T(\varepsilon)$ depends only on $\varepsilon,$ $\lambda_t(d)$ is well-factorable of level $D,$ degree $J$ and order $1.$
\end{proposition}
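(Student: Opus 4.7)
My plan is to apply Iwaniec's linear sieve (Lemma \ref{lm:Iwaniec}) directly to the sequence $\cA=(a_n)$ with
$a_n = g(n/X)\,\mathbb{1}_{n^2+1\equiv 0\bmod{\ell}},$
sifted by $\cP=\{p:p\nmid \ell\}$ at level $z=\sqrt{X}$. Since $g$ is supported in $[1,2]$, any $n$ with $a_n\neq0$ lies in $[X,2X]$; if in addition $(n,P(\sqrt{X}))=1$, then $n$ is either prime or a product of at least two primes, each $\geqslant \sqrt{X}$, so nonnegativity of $g$ gives $Q_\ell(X)\leqslant S(\cA,\cP,\sqrt{X})$.

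Next I would verify the sieve hypotheses. The Poisson summation computation performed just before \eqref{eq:Q_l(X)-sieve} already provides the decomposition
\[
A_d(X;\ell) = \widehat{g}(0)\,\frac{\rho(\ell)}{d\ell}\,X + r(X,d;\ell),
\]
so one takes the sifting constant $X_{\mathrm{sieve}}:=\widehat{g}(0)\rho(\ell)X/\ell$ together with the multiplicative density $g(d)=1/d$ supported on squarefrees coprime to $\ell$. Mertens' theorem then supplies hypothesis (b), namely
\[
V(z) = \prod_{p<z,\,p\nmid \ell}\!\left(1-\frac{1}{p}\right) = \frac{\ell}{\varphi(\ell)}\cdot\frac{e^{-\gamma}}{\log z}\left\{1+O\!\left(\frac{1}{\log X}\right)\right\},
\]
with an absolute constant $L$ in the corresponding inequality.

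With the hypotheses in place, Lemma \ref{lm:Iwaniec} yields, for $D<X$ and any $J\geqslant 2$,
\[
Q_\ell(X)\leqslant X_{\mathrm{sieve}}\,V(z)\!\left\{F\!\left(\frac{\log D}{\log z}\right)+E\right\} + \sum_{\nu\leqslant T(\varepsilon)}\sum_{\substack{d\leqslant D\\ d\mid P(z)}}\lambda^{(\nu)}(d)\,r(X,d;\ell),
\]
with each $\lambda^{(\nu)}$ well-factorizable of level $D$, degree $J$, and order $1$, and with $E=O(\varepsilon)$ once $X$ is sufficiently large. Since $D<X$ forces $s:=\log D/\log z\leqslant 2$, the boundary formula $F(s)=2e^\gamma/s$ applies, and multiplying everything out gives
\[
X_{\mathrm{sieve}}\cdot V(z)\cdot F(s) = \widehat{g}(0)\,\frac{\rho(\ell)}{\ell}\,X \cdot \frac{\ell}{\varphi(\ell)}\cdot\frac{e^{-\gamma}}{\log z}\cdot\frac{2e^\gamma\log z}{\log D}\{1+o(1)\} = \{2+o(1)\}\widehat{g}(0)\frac{\rho(\ell)}{\varphi(\ell)}\frac{X}{\log D},
\]
which is the claimed main term.

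This proposition is essentially bookkeeping; its only non-trivial input is Iwaniec's theorem on the well-factorable linear sieve, and there is no genuine obstacle within the proposition itself. The real difficulty — for which the factorizability of the $\lambda^{(\nu)}$ secured here is a prerequisite — lies in controlling the remainder sums $\sum_{d}\lambda^{(\nu)}(d)\,r(X,d;\ell)$ averaged over $\ell\in(L,2L]$. That task is deferred to later sections, where Lemma \ref{lm:Gausslemma} converts the congruence sums into algebraic trace function sums, and the arithmetic exponent pairs developed in Section \ref{sec:q-vdC} are what ultimately extract genuine cancellation from the resulting short sums of trace functions with well-factorable coefficients.
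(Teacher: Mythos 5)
Your proposal is correct and follows essentially the same route as the paper: bound $Q_\ell(X)$ by the sifting function $S(\cA,\cP,\sqrt{X})$, feed in the Poisson-summation decomposition of $A_d(X;\ell)$ to verify the sieve axioms with density $1/d$ and sifting constant $\widehat g(0)\rho(\ell)X/\ell$, and invoke Iwaniec's well-factorable linear sieve (Lemma \ref{lm:Iwaniec}) together with $F(s)=2\mathrm{e}^\gamma/s$ for $s\leqslant 2$ and Mertens' formula for $V(z)$. (You even correctly record the step $Q_\ell(X)\leqslant S(\cA,\cP,\sqrt X)$ as an inequality, which the paper writes as an equality via the M\"obius formula.)
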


Given a modulus $\ell$ of certain size, we hope the level $D$ could be chosen as large as possible. 
Theorem \ref{thm:Brun-Titchmarsh} is then implied by the following key proposition.

\begin{proposition}\label{prop:Q_l(X)bound-average}
Let $J$ be a sufficiently large integer and let $\lambda$ be well-factorable of degree $J.$
With the same notation as Proposition $\ref{prop:Q_l(X)bound}$,
for any $\varepsilon>0$ and  $(D, L) := (X^{\gamma(\theta)-\varepsilon}, X^\theta),$ there exists some $\delta=\delta(\varepsilon)>0$ such that
\begin{align*}
\sum_{\ell\sim L}\Big|\sum_{d\leqslant D} \mu(d)^2 \lambda(d) r_d(Y, X; \ell)\Big|\ll X^{1-\delta},\end{align*}
where $\gamma(\theta)$ is given by \eqref{eq:gamma(theta)}
and the implied constant depends on $\varepsilon$ and $J$. 
\end{proposition}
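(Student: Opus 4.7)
The plan is to convert the remainder $r(X,d;\ell)$ into a sum of algebraic exponential sums by means of Gau{\ss}' representation of $\ell$ as a sum of two squares, and then to exploit the well-factorizability of $\lambda$ supplied by Iwaniec's linear sieve in order to bring the arithmetic exponent pair machinery of Sections~\ref{sec:q-vdC} and \ref{sec:exponentpairs-constraints} to bear on the resulting sums.

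First I would truncate the $h$-sum in the Poisson expansion of $r(X,d;\ell)$ to the essential range $1\leqslant|h|\ll H := (dL/X)X^\varepsilon$, using the rapid decay of $\widehat g$. Applying Lemma~\ref{lm:Gausslemma}, the inner Weyl sum $\rho_{\overline d h}(\ell)$ is rewritten as a sum over coprime pairs $(r,s)$ with $r^2+s^2=\ell$, $r,s>0$, whose main phase is $e(-hr\overline{d_2(r^2+s^2)}/(d_1 s))$ (with $d=d_1d_2$, $d_2=(d,r^\infty)$) multiplied by a smooth ``archimedean'' correction $e(hr/(ds(r^2+s^2)))$ whose contribution is bounded by a factor of $X^\varepsilon$. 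Interchanging so that $r,s$ become outer variables and $\ell=r^2+s^2$ becomes a constraint, and inserting unimodular coefficients $\alpha_\ell$ to dispose of the absolute values, the quantity to bound takes the shape of a multilinear form in the variables $d=d_1d_2$, $h$, and $s$, with $r$ essentially determined by $\ell$ and $s$.

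Next, using that $\lambda=\lambda^{(1)}*\cdots*\lambda^{(J)}$ with freely choosable levels $D_1\cdots D_J=D$, I would split $d_1$ as a product of as many pairwise coprime factors as necessary. A round (or two) of Cauchy--Schwarz in carefully chosen outer variables, designed to ``smooth out'' $\alpha_\ell$ and some of the $\lambda^{(i)}$, reduces the matter to an incomplete sum of the composite trace function
\[
n\mapsto e\!\left(-\,\frac{n\, r\,\overline{d_2(r^2+s^2)}}{m\,s}\right)
\]
over $n$ (a relabelled $h$) of length $H$, with modulus $ms$ whose factorization $(m_1)(m_2)\cdots(m_k)(s)$ inherits as many coprime pieces from the well-factorable part $m$ as desired, together with the single rigid factor $s\asymp L^{1/2}$. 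This trace function has bounded conductor, and its Fourier transform (a hyper-Kloosterman-type sheaf of rank $\geqslant 2$) is $\infty$-amiable in the sense of Definition~\ref{def:amiablesheaf}, so the framework of Section~\ref{sec:q-vdC} applies; the stability under tensoring with additive shifts is guaranteed by Lemma~\ref{lm:sheaftensor}.

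Finally I would apply the arithmetic exponent pairs with constraints (Theorem~\ref{thm:exponentpair-constraints}) to the inner sum in $n$, using the optimized $A$-process on the factors of $m$ whose sizes can be adjusted freely, while treating $s\asymp L^{1/2}$ as the rigid factor playing the role of $Q$ in the constraints \eqref{eq:exponentpair-A-constraints}--\eqref{eq:exponentpair-B-constraints}. The three regimes of $\gamma(\theta)$ in \eqref{eq:gamma(theta)} should correspond to the three consecutive exponent pairs among $A^k(\tfrac12,\tfrac12,\tfrac12)$ and $BA^k(\tfrac12,\tfrac12,\tfrac12)$ that become optimal as $\theta$ grows: as $\theta$ increases, the rigid factor $s$ occupies a proportionally larger share of the modulus $ms$, the constraint on the iteration depth becomes more binding, and the optimal depth decreases by one at each transition $\theta=64/97$ and $\theta=32/41$ (these thresholds being exactly the values at which two consecutive exponent pairs give equal overall savings).

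The main obstacle will be the exponent bookkeeping: one has to verify that under the rigid constraint $s\asymp L^{1/2}$ together with the free choice of sizes of the well-factorable pieces in $[1,D]$, the optimal arithmetic exponent pair produces precisely the piecewise-linear function $\gamma(\theta)$ of \eqref{eq:gamma(theta)}, and that the two breakpoints $64/97$ and $32/41$ appear as equalities between the exponents coming from successive $A$-iterations. Subsidiary technical issues are the control of the archimedean correction (a harmless smooth weight), the verification that the composite trace function remains admissible and sufficiently amiable after each $A$-process via Lemma~\ref{lm:sheaftensor}, and the treatment of the diagonal contributions in Cauchy--Schwarz, which do not benefit from square-root cancellation but are expected to be negligible for the chosen value of $D$.
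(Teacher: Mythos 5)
Your overall architecture (truncate $h$, apply Gau{\ss}' lemma, exploit well-factorability, invoke Theorem \ref{thm:exponentpair-constraints} with $s\asymp L^{1/2}$ as the rigid factor $Q$, and read off the three regimes of $\gamma(\theta)$ from successive $A$-iterations) matches the paper in outline, but the step where you actually extract cancellation is wrong. You propose to apply the arithmetic exponent pair machinery to the sum over $n$ (the relabelled Poisson frequency $h$) with "trace function" $n\mapsto \mathrm{e}\bigl(-nr\overline{d_2(r^2+s^2)}/(ms)\bigr)$. That function is \emph{linear} in $n$: it is the trace function of an Artin--Schreier sheaf $\cL_{\psi(P)}$ with $\deg P=1$, so it is not even $1$-amiable in the sense of Definition \ref{def:amiablesheaf} — one $A$-process turns it into a constant, and its Fourier transform is a delta function, not a hyper-Kloosterman-type sheaf of rank $\geqslant 2$ as you claim. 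The entire $q$-vdC framework is built to exclude exactly this case, so your inner sum yields nothing beyond the trivial geometric-series bound and the claimed savings never materialize.

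The paper's proof avoids this by choosing a different variable for the exponent-pair step. After writing $\lambda=\alpha*\beta$ with levels $M,N$ ($MN=D$), it applies Cauchy--Schwarz with $m\sim M$ and $\ell$ (hence $a\bmod\ell$) \emph{outside} and the pair $(n,h)$ \emph{inside}; squaring out produces $\rho_{\overline{m}(h_1\overline{n_1}-h_2\overline{n_2})}(\ell)$, and only then is Gau{\ss}' lemma applied, converting the $\ell$-sum into a free double sum over $(r,s)$ with $r\sim R$, $s\sim S$, $R^2+S^2\asymp L$. The variables $h_1,h_2,n_1,n_2,m,s$ are then estimated trivially, and the exponent pairs are applied to the sum over $r$, whose phase $\xi(r)$ involves $r\,\overline{(r^2+s^2)}$ modulo $\tm\tn s$ — a genuinely nonlinear rational function, hence amiable — with the well-factorable part $\tm\tn$ coming from the sieve weights and $s\ll\sqrt{L}$ as the rigid factor. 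The diagonal $h_2\sigma_2a_1=h_1\sigma_1a_2$ produces the terms $HLMN$ in \eqref{eq:B(M,N)-estimate}. Without this reorganization — in particular, without moving the cancellation onto the $r$-variable — your plan cannot reach any level $D$ beyond what trivial completion gives.
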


Proposition \ref{prop:Q_l(X)bound-average} implies, in the ranges \eqref{eq:gamma(theta)}, that
\begin{align*}
\sum_{t\leqslant T(\varepsilon)} \sum_{\substack{d\leqslant D\\d\mid P(z)}} \lambda_t(d) r_d(Y, X; \ell)
\ll \ell^{-1}X^{1-\delta}
\end{align*}
save for at most $O(L(\log L)^{-A})$ exceptional values of $\ell\in[L,2L]$. 
This, together with Proposition \ref{prop:Q_l(X)bound}, yields
\begin{align*}
Q_{\ell}(X)\leqslant\{2+o(1)\}\frac{\varrho(\ell)}{\varphi(\ell)}\frac{X}{\log D}\end{align*}
for such $\ell$. It now suffices to prove Propositions \ref{prop:Q_l(X)bound-average} 
in order to derive Theorem \ref{thm:Brun-Titchmarsh}.

\subsection{Reducing to exponential sums}
Recall that
\begin{align*}
r_d(Y, X; \ell)
= \frac{1}{d\ell} \sum_{\substack{a\bmod{\ell}\\a^2+1\equiv0\!\bmod{\ell}}} \sum_{h\neq0}
\widehat{\fJ}\Big(\frac{h}{d\ell}\Big) \mathrm{e}\Big(\frac{a\overline{d}h}{\ell}\Big).
\end{align*}
From the rapid decay of $\widehat{\fJ}$, we get
\begin{align*}r_d(Y, X; \ell)&=\frac{1}{d\ell}\sum_{\substack{a\bmod{\ell}\\a^2+1\equiv0\!\bmod{\ell}}}\sum_{0<|h|\leqslant H}\widehat{\fJ}\Big(\frac{h}{d\ell}\Big)\mathrm{e}\Big(\frac{a\overline{d}h}{\ell}\Big)+O(X^{-100})\end{align*}
for $H=DLX^{-1+\varepsilon}$. Upon the choices in \eqref{eq:gamma(theta)}, we remark that $\gamma(\theta)+\theta>1$ for $\theta\in[\frac{1}{2},\frac{16}{17}[$, so that $H\gg X^\varepsilon$ in this situation.
By dyadic device, it then suffices to prove that
\begin{align}\label{eq:errorterm-L1estimate}
\sum_{\ell\sim L} \sum_{\substack{a\bmod{\ell}\\a^2+1\equiv0\!\bmod{\ell}}}
\bigg|\sum_{d\sim  D} \mu(d)^2 \lambda(d) \sum_{h\leqslant H}
\widehat{\fJ}\Big(\frac{h}{d\ell}\Big)\mathrm{e}\Big(\frac{a\overline{d}h}{\ell}\Big)\bigg|
\ll DLX^{1-\varepsilon'}
\end{align}
for some $\varepsilon'>0$ (depending on $\varepsilon$).
Due to the well-factorization of $\lambda$, we may write 
\[\lambda=\alpha*\beta
\quad\text{with}\quad
\alpha=\alpha_1*\cdots*\alpha_{J_1}
\quad\text{and}\quad
\beta=\beta_1*\cdots*\beta_{J_2},\]
where $\alpha_j$ and $\beta_j$ are of level $M_j$ and $N_j$, respectively, and 
$$M_1\cdots M_{J_1}=M,
\quad
N_1\cdots N_{J_2}=N,
\quad 
MN=D 
\quad\text{and}\quad
J_1+J_2=J.
$$
Hence the left-hand side of \eqref{eq:errorterm-L1estimate} is at most
\begin{align*}
&\ll X^\varepsilon\sum_{m\sim M} |\alpha(m)| \sum_{\ell\sim L}  \sum_{\substack{a\bmod{\ell}\\a^2+1\equiv0\!\bmod{\ell}}} \!\!\!\!
\bigg|\sum_{n\sim N} \beta(n) \sum_{h\leqslant H}
\widehat{\fJ}\Big(\frac{h}{\ell mn}\Big) \mathrm{e}\Big(\frac{ah\overline{mn}}{\ell}\Big)\bigg|
\end{align*}
for some choices $M,N$ and $\alpha,\beta$ as above.
By Cauchy inequality, this is further bounded by
\begin{align*}
&\ll X^\varepsilon(ML)^{\frac{1}{2}}\cB(M,N)^{\frac{1}{2}}
\end{align*}
with
\begin{align*}
\cB(M, N) 
:= \sum_{m\sim M} |\alpha(m)| \sum_{\ell\in\bZ} W\Big(\frac{\ell}{L}\Big)
\!\!\!\!\!\! \sum_{\substack{a\bmod{\ell}\\a^2+1\equiv0\!\bmod{\ell}}} \!\!\!\!
\bigg|\sum_{n\sim N} \beta(n) \sum_{h\leqslant H}
\widehat{\fJ}\Big(\frac{h}{\ell mn}\Big) \mathrm{e}\Big(\frac{ah\overline{mn}}{\ell}\Big)\bigg|^2,
\end{align*}
where $W$ is a non-negative smooth function with compact support in $[\tfrac{1}{2}, \tfrac{5}{2}]$ 
and takes value $1$ in $[1, 2]$.
We would like to show, for some small $\delta>0$, that
\begin{align}\label{eq:B(M,N)bound}
\cB(M, N)\ll (ML)^{-1}(DL)^2X^{2-\delta}
\end{align}
holds
in the ranges of \eqref{eq:gamma(theta)}. We here leave a remark that $\mu^2(mn)=1$ is always kept in mind, which yields $(m,n)=1$ directly and will not be displayed to ease the presentation.

Squaring out and switching summations, we get
\begin{align*}
\cB(M, N)
& = \mathop{\sum\sum}_{n_1,n_2\sim N} \beta(n_1) \overline{\beta(n_2)}
\mathop{\sum\sum}_{h_1,h_2\leqslant H}\sum_{m\sim M}|\alpha(m)|
\sum_{\ell\in\bZ} \Phi_{\ell}(\bh; m, \bn) 
\varrho_{\overline{m}(h_1\overline{n_1}-h_2\overline{n_2})}(\ell),
\end{align*}
where $\bh=(h_1, h_2)$, $\bn=(n_1, n_2)$,
$\varrho_n(\ell)$ is defined as in \eqref{eq:rho(n,l)} and
\begin{align*}
\Phi_{\ell}(\bh; m, \bn)
:= W\Big(\frac{\ell}{L}\Big) \widehat{\fJ}\Big(\frac{h_1}{\ell mn_1}\Big)
\overline{\widehat{\fJ}\Big(\frac{h_2}{\ell mn_2}\Big)}.
\end{align*}

According to Lemma \ref{lm:Gausslemma}, for each $a\bmod{\ell}$ with $a^2+1\equiv0 \bmod{\ell}$, 
we have
\begin{align*}
\frac{ah_j\overline{mn_j}}{\ell}
\equiv -\frac{h_jr\overline{a_j(r^2+s^2)}}{smn_j/a_j}+ \frac{h_jr}{mn_js(r^2+s^2)}\;\bmod{1}
\end{align*}
for $j=1, 2$, where $a_j=(mn_j,r),$ $r>0$, $s>0$, $(r^2+s^2,mn_1n_2)=(r,s)=1$ and $r^2+s^2=\ell$. Therefore, the exponential sum $ \varrho_{\overline{m}(h_1\overline{n_1}-h_2\overline{n_2})}(\ell)
$ can be rewritten as
\begin{align*}
\sum_{\substack{a_1\mid mn_1\\ a_2\mid mn_2}} 
\mathop{\sum\sum}_{\substack{r, \, s>0, \, (r,s)=1\\ r^2+s^2=\ell\\ (mn_j, r)=a_j,j=1,2\\ (mn_1n_2, r^2+s^2)=1}} \!\!\! \mathrm{e}\bigg(\!\frac{h_2r\overline{a_2(r^2+s^2)}}{smn_2/a_2}
-\frac{h_1r\overline{a_1(r^2+s^2)}}{smn_1/a_1}\bigg)
\mathrm{e}\bigg(\frac{r(h_1/n_1-h_2/n_2)}{ms(r^2+s^2)}\bigg).
\end{align*}
In view of the Taylor expansion
\begin{align*}
\mathrm{e}\bigg(\frac{r(h_1/n_1-h_2/n_2)}{ms(r^2+s^2)}\bigg)
= 1 + O\bigg(\frac{Hr}{LMNs}\bigg),
\end{align*}
it follows that
\begin{equation}\label{eq:B(M,N)}
\begin{aligned}
\cB(M, N)
& = \sum_{\substack{n_1\sim N\\ n_2\sim N}} \beta(n_1) \overline{\beta(n_2)}
\sum_{\substack{h_1\le H\\ h_2\le H}} \sum_{m\sim M} |\alpha(m)|
\sum_{\substack{a_1\mid mn_1\\a_2\mid mn_2}}
 \mathop{\sum\sum}_{\substack{r, \, s>0, \, (r,s)=1\\(mn_j, r)=a_j,j=1,2\\ (mn_1n_2, r^2+s^2)=1}} 
\\
& \hskip -5mm\times
\Phi_{r^2+s^2}(\bh; m, \bn)
\mathrm{e}\bigg(\!\frac{h_2r\overline{a_2(r^2+s^2)}}{smn_2/a_2}
- \frac{h_1r\overline{a_1(r^2+s^2)}}{smn_1/a_1}\bigg)+O(H^3N).
\end{aligned}
\end{equation}

The innermost sums over $r,s$ can be split into dyadic intervals, $r\sim R,s\sim S$, say, where $R^2+S^2\asymp L$ and so that $R,S\ll\sqrt{L}.$ The following arguments will focus on estimating the sum over $r$ effectively to gain cancellations and the other sums will be treated trivially.
We thus consider the following exponential sum
\begin{align*}
T(\ba ; m, \bn, s) := \sideset{}{^*}\sum_{\substack{r\sim R\\(mn_j, r)=a_j,j=1,2}} \mathrm{e}(\xi(r)),
\end{align*}
where $\ba=(a_1, a_2),\bn=(n_1,n_2)$, 
the symbol $*$ means that $(r, s)=1$ and $(mn_1n_2,r^2+s^2)=1$, and
\begin{align}\label{eq:xi(r)}
\xi(r)\equiv\frac{h_2r\overline{a_2(r^2+s^2)}}{smn_2/a_2}-\frac{h_1r\overline{a_1(r^2+s^2)}}{smn_1/a_1} \; \bmod{1}.
\end{align}

Thanks to the well-factorizations of $m,n_1,n_2$, we may appeal to $q$-vdC and the method of 
arithmetic exponent pairs developed in Section \ref{sec:q-vdC} to give sharp estimates for 
$T(\ba, m, \bn, s)$. Note that the restrictions $(mn_1, r)=a_1$ and 
$(mn_2, r)=a_2$ can be rewritten equivalently as $r\equiv 0 \bmod{[a_1, a_2]}$ with some extra coprime conditions on $r$. There are many ways to relax the congruence condition; for instance, one can write $r=r'[a_1, a_2]$ with $r'\sim R/[a_1, a_2]$, or the orthogonality of additive characters can be ultilized to detect the divisibility. For the economy of presentation, we appeal to the first treatment and write
\begin{align*}
T(\ba; m, \bn, s) 
= \sideset{}{^*}\sum_{r\sim R_0} 
\mathrm{e}(\xi([a_1,a_2]r)),
\end{align*}
where $R_0=R/[a_1,a_2].$

\subsection{Estimates for exponential sums}
Let $j=1,2$. For $a_j=(mn_j,r)$, we may write $a_j = b c_j$ in a unique way, 
where $b=(m,r), c_j=(n_j,r).$ Note that $(bc_j,s)=1.$ 
Put $\tm:=m/b$, so that one can rewrite \eqref{eq:xi(r)} as
\begin{align*}
\xi(r)\equiv\frac{h_2r\overline{a_2(r^2+s^2)}}{\tm(n_2/c_2)s}
-\frac{h_1r\overline{a_1(r^2+s^2)}}{\tm(n_1/c_1)s}
\;\bmod{1}.
\end{align*}
Write $\tn=[n_1/c_1, n_2/c_2]$, and $\tn=(n_1/c_1)\sigma_1=(n_2/c_2)\sigma_2$, so that $(\sigma_1,\sigma_2)=1$. 
Thus we may have
\begin{align*}
\xi(r)\equiv{r}\Delta\cdot\frac{\overline{a_1a_2(r^2+s^2)}}{\tm\tn s}\;\bmod{1},
\ \ \ \ 
\Delta:=h_2\sigma_2a_1-h_1\sigma_1a_2.
\end{align*}
Denote by $(\tm\tn s)^\flat$ and $(\tm\tn s)^\sharp$ the squarefree and squarefull parts of $\tm\tn s$, respectively. 
We then have
\begin{align*}
\xi(r)
& \equiv r\Delta\bigg(\frac{\overline{a_1a_2(\tm\tn s)^\sharp(r^2+s^2)}}{(\tm\tn s)^\flat}+\frac{\overline{a_1a_2(\tm\tn s)^\flat(r^2+s^2)}}{(\tm\tn s)^\sharp}\bigg)\;\bmod{1}
\\
&\equiv \frac{r\Delta}{k}\frac{\overline{a_1a_2(\tm\tn s)^\sharp(r^2+s^2)}}{(\tm\tn s)^\flat/k}+r\Delta \frac{\overline{a_1a_2(\tm\tn s)^\flat(r^2+s^2)}}{(\tm\tn s)^\sharp}\;\bmod{1},\end{align*}
where $k:= (\Delta, (\tm\tn s)^\flat)$.

We would like to apply the method of arithmetic exponent pairs 
in the case of $q=(\tm\tn s)^\flat/k,$ $\delta=(\tm\tn s)^\sharp$, the trace function
\begin{align}\label{eq:K-q}
K: x\mapsto 
\mathrm{e}\bigg(\frac{x\Delta[a_1,a_2]}{k}\frac{\overline{a_1a_2(\tm\tn s)^\sharp([a_1,a_2]^2x^2+s^2)}}{(\tm\tn s)^\flat/k}\bigg)
\end{align}
and the deformation factor
\begin{align}\label{eq:W-delta}
W_\delta: x\mapsto 
\mathrm{e}\bigg(x\Delta[a_1,a_2] \frac{\overline{a_1a_2(\tm\tn s)^\flat([a_1,a_2]^2x^2+s^2)}}{(\tm\tn s)^\sharp}\bigg).
\end{align}
Note that $\|W_\delta\|_\infty=1.$

Before stating the precise estimate for $T(\ba; m, \bn, s)$, 
one has to produce an admissible upper bound for $\widehat{W}_\delta$. 
In Appendix B, we will present an {\it almost square-root cancellation} for complete algebraic exponential sums, see Theorem \ref{thm:S(lambda,c)bound} therein, from which we may conclude, 
if $W_\delta$ is chosen as \eqref{eq:W-delta}, that
\begin{align*}
\|\widehat{W}_\delta\|_{\infty}
\leqslant 2\cdot6^{\omega(\delta)}(h_2\sigma_2a_1-h_1\sigma_1a_2,(\tm\tn s)^\ddagger)
\cdot \Xi(\tm\tn s)^{\frac{1}{2}},
\end{align*}
where the superscript $\ddagger$ yields 
\begin{align}\label{eq:definition-ddagger}
n^\ddagger:=\prod_{p^2\| n}p,
\end{align}
$\Xi(\cdot)$ is defined as \eqref{eq:Xi}.

In order to introduce exponent pairs, one has to examine the amiability of the trace function in \eqref{eq:K-q} firstly. To this end, we just need to consider the following function
\begin{align*}
K_p(x)=\ue\Big(\frac{h_1x\overline{(x^2+h_2)}}{p}\Big)
\end{align*}
with $p\nmid h_1.$ First,  $x\mapsto K_p(x)$ is an $\infty$-amiable trace function of rank 1, which is thus geometrically irreducible. The property on rank 1 is very important, so that the resultant trace function is always of rank 1 and $\infty$-amiable after arbitrarily many $A$-processes. In this way, the original trace function will be transformed to some other function of the shape
\begin{align*}
K_p^*(x)=\ue\Big(\frac{g_1(x)\overline{g_2(x)}}{p}\Big),
\end{align*}
where $g_1,g_2\in\bF_p[X]$ are two polynomials with $\deg(g_1)<\deg(g_2)$ over $\bF_p.$
Following the arguments in Lemma \ref{lm:B-amiability}, the Fourier transform of $K_p^*$ is compositely $\infty$-amiable.

Let $(\kappa,\lambda,\nu)$ be an exponent pair of width $(J_0;L_0)$ for some $J_0,L_0\geqslant1$. Note that $\tm,\tn$ have well factorizations due to the sieve weight in Lemma \ref{lm:Iwaniec}.
Hence we have
\begin{align*}
T(\ba; m, \bn, s)
& \ll R^{\varepsilon}\Big(\frac{q}{R_0}\Big)^{\kappa}{R_0}^\lambda\delta^\nu\ll X^{\varepsilon}\bigg(\frac{(\tm\tn s)^\flat/k}{R_0}\bigg)^{\kappa} {R_0}^\lambda \big((\tm\tn s)^\sharp\big)^\nu
\end{align*}
with a relevant restriction, coming from Theorem \ref{thm:exponentpair-constraints} according to the type of $(\kappa,\lambda,\nu)$. This estimate is valid for incomplete exponential sums, i.e., $R<q\delta[a_1,a_2]=\tm\tn s[a_1, a_2]/k$, and otherwise, we appeal to the completing method only, getting
\begin{align*}
T(\ba; m, \bn, s)
& \ll R^{\varepsilon}\bigg[\frac{R_0}{q\delta}\bigg]\sqrt{q\delta} \, \|\widehat{W}_\delta\|_\infty\\
&\ll X^\varepsilon (h_2\sigma_2a_1-h_1\sigma_1a_2, (\tm\tn s)^\ddagger)\cdot R_0\sqrt{\frac{\Xi(\tm\tn s) k}{\tm\tn s}}.
\end{align*}
Inserting the above two estimates to \eqref{eq:B(M,N)}, and from Theorem \ref{thm:exponentpair-constraints}, we obtain
\begin{align}
\cB(M, N)&\ll H^3N+X^\varepsilon (HLMN+(MN^2)^{\kappa+1}H^2L^{(\lambda+1)/2})\label{eq:B(M,N)-estimate},
\end{align}
provided that $\lambda\geqslant2\kappa,\lambda-\kappa\geqslant\frac{1}{2},$ and
\begin{align*}
(MN^2)^{1-2\kappa}\geqslant L^{\lambda-\frac{1}{2}}\end{align*}
if $(\kappa,\lambda)$ is of the type $A^k(\frac{1}{2},\frac{1}{2})$,
or that
\begin{align*}
(MN^2)^{2-2\lambda}\geqslant L^{\kappa}\end{align*}
if $(\kappa,\lambda)$ is of the type $BA^k(\frac{1}{2},\frac{1}{2})$.
Note that the term $HLMN$ comes from the {\it diagonal} terms with $h_2\sigma_2a_1=h_1\sigma_1a_2.$ 

The details of proving \eqref{eq:B(M,N)-estimate} have been omitted here, and attentive readers are referred to the Addendum attaching to the arXiv version of this paper: arXiv:1603.07060.

\subsection{Concluding Theorem \ref{thm:Brun-Titchmarsh}}
In order to conclude Proposition \ref{prop:Q_l(X)bound-average}, and thus Theorem \ref{thm:Brun-Titchmarsh}, 
it suffices to check, in view of \eqref{eq:B(M,N)bound}, that
\begin{align*}
H^3N+HLMN+(MN^2)^{\kappa+1}H^2L^{(\lambda+1)/2}\ll (ML)^{-1}(DL)^2X^{-\varepsilon'}\end{align*}
with the restriction $(MN^2)^{1-2\kappa}\geqslant L^{\lambda-\frac{1}{2}}$ or $(MN^2)^{2-2\lambda}\geqslant L^{\kappa}$ according to the type of $(\kappa,\lambda)$ satisfying $\lambda\geqslant2\kappa,\lambda-\kappa\geqslant\frac{1}{2}$.
The remaining task is to find the maximum of $D=MN$ when $M,N$ satisfy the above inequalities with some exponent pair $(\kappa,\lambda)\neq(\frac{1}{2},\frac{1}{2})$.
Put
\[M=X^\alpha,
\qquad
N=X^{\beta},
\qquad
D=X^{\gamma}=X^{\alpha+\beta},
\qquad
L=X^\theta.\]
We are going to maximize $\gamma=\alpha+\beta$ subject to the simultaneous restrictions
\begin{align*}
\begin{cases}
\alpha>0,\beta>0,\\
\alpha+\beta+\theta<\tfrac{3}{2},\\
\alpha+\theta<1,\\
(\alpha+2\beta)(\kappa+1)+\tfrac{1}{2}\theta(\lambda+3)+\alpha<2,\\
\lambda\geqslant2\kappa,\lambda-\kappa\geqslant\frac{1}{2}
\end{cases}
\end{align*}
with an additional constraint
\[(1-2\kappa)(\alpha+2\beta)\geqslant(\lambda-\tfrac{1}{2})\theta~~~~\text{~~~~or~~~~}~~~~2(1-\lambda)(\alpha+2\beta)\geqslant\kappa\theta,\]
depending that $(\kappa,\lambda)$ comes from $A^k$- or $BA^k$-process.

Choosing different exponent pairs, we may conclude different 
maximum of $\gamma$ when $\theta$ is in different ranges. Keeping in mind the amiability of the relevant trace function, we list the choices of exponent pairs as the following table .

\begin{table}[htbp]
\renewcommand{\arraystretch}{1.7}
\renewcommand{\tabcolsep}{2.2mm}
\centering
\begin{tabular}{ | c | c | c | c |c |}
\hline
$(\kappa,\lambda)$ 
& 
$(\frac{1}{6}, \frac{2}{3}) = A(\frac{1}{2}, \frac{1}{2})$ 
& 
$(\frac{1}{14},\frac{11}{14}) = A^2(\frac{1}{2}, \frac{1}{2})$ 
& 
$(\frac{1}{30},\frac{26}{30}) = A^3(\frac{1}{2}, \frac{1}{2})$ 
\\ 
\hline
maximum of $\gamma$
& 
$\frac{19-18\theta}{14}$ 
& 
$\frac{86-83\theta}{60}$ 
& 
$\frac{91-89\theta}{62}$ 
\\ 
\hline
range of $\theta$ 
&
$[\frac{1}{2},\frac{16}{17}[$
&
$[\frac{1}{2}, \frac{8}{9}[$
&
$[\frac{1}{2},\frac{112}{131}[$
\\ 
\hline
\end{tabular}
\\~\\
\vskip 2mm
\caption{Choices of $(\theta, \gamma)$}
\end{table}
Proposition \ref{prop:Q_l(X)bound-average} follows by optimizing the maximum of $\gamma$, and this completes the proof of Theorem \ref{thm:Brun-Titchmarsh}.

\smallskip

\section{Proof of Theorems \ref{thm:tauAPs} and \ref{thm:subconvexity}: Divisor functions in arithmetic progressions and subconvexity of Dirichlet $L$-functions}
\label{sec:tauAPs-subconvexity}

We now give the sketch of the proof of Theorem \ref{thm:tauAPs}. Following the arguments of Irving, it suffices to estimate the exponential sum
\begin{align*}
S := \sum_{n\sim N} \mathrm{e}\Big(\frac{h\overline{n}}{q}\Big)
\end{align*}
when $N\asymp \sqrt{X}$ and $(h,q)=1.$ Note that $n\mapsto \ue(h\overline{n}/q)$ is a compositely $\infty$-amiable trace function mod $q$, and so is its Fourier transform (as a normalized Kloosterman sum). 

Let $p$ be a large prime, and $\psi$ a non-trivial additive character on $\bF_p.$ It is known that $x\mapsto\psi(\overline{x})$ is of rank 1, geometrically irreducible and $\infty$-amiable. In fact, the underlying sheaf is universally amiable in the sense of Definition \ref{def:universallyamiable}, which allows us to apply any arithmetic exponent pair $(\kappa,\lambda,\nu)$ to the above average $S$, getting
\[S\ll N^{\varepsilon}(q/N)^\kappa N^\lambda\ll q^{\kappa}X^{(\lambda-\kappa)/2+\varepsilon}.\]
This is dominated by $X/\varphi(q)$ as long as
\[\theta<\frac{2}{3}+\frac{2-\kappa-3\lambda}{6(\kappa+1)}\cdot\]
Using the algorithm for exponent pairs (see \cite[Section 5]{GK91}), we would like to choose
\begin{align*}
(\kappa,\lambda)&=BA^3BA^2BABABA^2(\tfrac{1}{2},\tfrac{1}{2})=(\tfrac{591}{1535}, \tfrac{808}{1535}),\end{align*}
getting
\[\frac{2-\kappa-3\lambda}{6(\kappa+1)}=\frac{55}{12756}\approx\frac{1}{231.92},\]
which completes the proof of Theorem \ref{thm:tauAPs}.

\bigskip

We now turn to the proof of Theorem \ref{thm:subconvexity}.
Using the approximate functional equation of $L(\tfrac{1}{2}, \chi)$, it suffices to consider the incomplete character sum
\[\sum_{M<n\leqslant M+N}\chi(n).\]
Given a large prime $p$, the Kummer sheaf $\cL_\chi$ is of course universally amiable for each non-trivial character $\chi$ on $\bF_p^\times.$
In this case, for any arithmetic exponent pair $(\kappa,\lambda,\nu)$, we have
\[\sum_{M<n\leqslant M+N}\chi(n)\ll q^{\kappa}N^{\lambda-\kappa+O(\eta)},\]
and
\[
L(\tfrac{1}{2}, \chi)\ll  q^{\frac{\lambda+\kappa}{2}-\frac{1}{4}+O(\eta)}.
\]

Taking
\begin{align*}
(\kappa,\lambda)=ABA^3BA^2BABABA^2BABABA^2BA^2\cdots(\tfrac{1}{2},\tfrac{1}{2}),
\end{align*}
and according to Rankin (see also \cite[Section 5.4]{GK91}), the minimal value for $\kappa+\lambda$ should be $0.829021\ldots$,
which yields the expected exponent in Theorem \ref{thm:subconvexity}.

\smallskip

\appendix

\section{Estimates for complete exponential sums}

This appendix is devoted to estimate the complete exponential sum
\begin{align*}
\Sigma(\lambda,c)
:= \sum_{a\bmod{c}} \mathrm{e}\Big(\frac{\lambda(a)}{c}\Big),
\end{align*}
where $c$ is a fixed positive integer and $\lambda=\lambda_1/\lambda_2$ with $\lambda_1,\lambda_2\in\bZ[X]$ and $\lambda_1,\lambda_2$ being coprime in $\bZ[X]$.
The values of $a$ such that $(\lambda_2(a),c)\neq1$ are excluded from summation. 
We define the degree of $\lambda$ by
\[d=d(\lambda)=\deg(\lambda_1)+\deg(\lambda_2).\]
If 
\[\lambda_1(x)=\sum_{0\leqslant j\leqslant d_1}r_jx^j\in\bZ[x],\ \ \ \lambda_2(x)=\sum_{0\leqslant j\leqslant d_2}t_jx^j\in\bZ[x],\]
we then adopt the convention that 
\begin{align}
(\lambda,c_0)_*&=(r_0,r_1,r_2,\dots,r_{d_1},c_0),
\label{eq:gcdrationalfunctions1}\\
(\lambda,c_0)&=(r_1,r_2,\dots,r_{d_1},t_1,t_2,\dots,t_{d_2},c_0),\label{eq:gcdrationalfunctions2}\\ (\lambda',c_0)&=(\lambda_1'\lambda_2-\lambda_1\lambda_2',c_0)
\label{eq:gcdrationalfunctions3}
\end{align}
for all $c_0\mid c.$

There are many known estimates for complete exponential sums studied extensively by Vinogradov, Hua, Vaughan, Cochrane--Zheng, {\it et al}. We here present an alternative estimate that suits well for our applications to Theorem \ref{thm:Brun-Titchmarsh}.

By the Chinese remainder theorem, we have
\begin{align}\label{eq:CRT}
\Sigma(\lambda,c)=\prod_{p^\beta\|c}\Sigma(\overline{c/p^\beta}\cdot \lambda,p^\beta),\end{align}
where $\overline{c/p^\beta}$ denotes the multiplicative inverse of ${c/p^\beta}\bmod{p^\beta}$. Therefore, the evaluation of $\Sigma(\lambda,c)$ can be reduced to the case of prime power moduli.

The case $c=p$ can be guaranteed by Weil's proof on Riemann hypothesis for curves over finite fields. The estimate of a general form can be found, for instance, in \cite[Theorem 5]{Bo66}.
\begin{lemma}\label{lm:Weil}
Let $d=d(\lambda)\geqslant1.$ Then we have
\begin{align*}|\Sigma(\lambda,p)|\leqslant2dp^{\frac{1}{2}}(\lambda,p)^{\frac{1}{2}}(\lambda,p)_*^{\frac{1}{2}},
\end{align*}
where $(\lambda,p)$ and $(\lambda,p)_*$ are given following the convention in $\eqref{eq:gcdrationalfunctions1}, \eqref{eq:gcdrationalfunctions2}.$
\end{lemma}

In fact, one can make use of a better choice $2dp^{\frac{1}{2}}\{(\lambda,p)+(\lambda,p)_*\}^{\frac{1}{2}}$
in the upper bound in Lemma \ref{lm:Weil}. The choice $2dp^{\frac{1}{2}}(\lambda,p)^{\frac{1}{2}}(\lambda,p)_*^{\frac{1}{2}}$ makes Lemma \ref{lm:Weil} worser than the trivial bound if $\lambda_1$
is zero mod $p$; however, this choice makes the upper bound (up to some scalar) is multiplicative in $p$; which is crucial in the statement of Theorem \ref{thm:S(lambda,c)bound}.

The condition $(\lambda,p)=(\lambda,p)_*=1$ is equivalent to the statement that $\lambda\bmod p$ is a not constant function, in which case
 the above bound presents the {\it square-root cancellation} among the exponentials, which is best possible in general.
The case $c=p^\beta$ with $\beta\geqslant2$ becomes considerably easier because elementary evaluations are usually sufficient.
The following lemma evaluates $\Sigma(\lambda,p^\beta)$ for $\beta\geqslant2$, from which one can also obtain square-root cancellations up to some extra factor, which can be controlled effectively on average, as we will see later. The detailed proof can be found in \cite[Section 12.3]{IK04}.
\begin{lemma}\label{lm:IKevaluation}
Let $\alpha\geqslant1.$ Then we have
\begin{align*}
\Sigma(\lambda,p^{2\alpha})
= p^\alpha \sum_{\substack{y\bmod {p^\alpha}\\ \lambda'(y)\equiv0\!\bmod{p^\alpha}}}
\mathrm{e}\Big(\frac{\lambda(y)}{p^{2\alpha}}\Big)
\end{align*}
\end{lemma}

Using the evaluations as above, we would like to derive a precise estimate for $\Sigma(\lambda,c)$ that is applicable in many applications.

Recalling the expression \eqref{eq:CRT}, we may write
\begin{align*}
\Sigma(\lambda,c)
= \prod_{\substack{p^\beta\|c\\ \beta\leqslant2}}\Sigma(\overline{c/p^\beta}\cdot \lambda,p^\beta)\cdot\prod_{\substack{p^\beta\|c\\ \beta\geqslant3}}\Sigma(\overline{c/p^\beta}\cdot \lambda,p^\beta)
=: \Sigma_1\cdot\Sigma_2,
\quad(\text{say}).
\end{align*}
For $p\|c,$ by Lemma \ref{lm:Weil}, we have
\begin{align*}|\Sigma(\overline{c/p}\cdot \lambda,p)|\leqslant (2d)(\lambda,p)^{1/2}(\lambda,p)_*^{\frac{1}{2}}p^{\frac{1}{2}}.\end{align*}
If $p^2\|c$, Lemma \ref{lm:IKevaluation} implies 
\begin{align*}
|\Sigma(\overline{c/p^2}\cdot \lambda,p^2)|
& \leqslant p\,|\{y\bmod p:\lambda'(y)\equiv0\bmod p\}|
\\
& \leqslant \deg(\lambda')(\lambda',p)p.\end{align*}
Hence we conclude that
\begin{align*}
|\Sigma_1| \leqslant c_1^{\frac{1}{2}}(\lambda,c^\flat)^{\frac{1}{2}}(\lambda,c^\flat)_*^{\frac{1}{2}}(\lambda',c^\ddagger)(2d)^{\omega(c_1)}
\quad\text{with}\quad
c_1= \prod_{p^\beta\|c, \, \beta\leqslant2} p^\beta,\, c^\ddagger=\prod_{p^2\| c}p.
\end{align*}

On the other hand, a trivial estimate gives
\begin{align*}
|\Sigma_2|
\leqslant \prod_{p^\beta\|c, \, \beta\geqslant3} p^\beta = \Xi(c),
\end{align*}
where $\Xi(c)$ is defined by \eqref{eq:Xi}.

Collecting the above two estimates, we may conclude the following theorem.
\begin{theorem}\label{thm:S(lambda,c)bound}
Let $d=d(\lambda).$ For $c\geqslant1,$ we have
\begin{align*}
|\Sigma(\lambda,c)|
\leqslant c^\frac{1}{2}(\lambda,c^\flat)^{\frac{1}{2}}(\lambda,c^\flat)_*^{\frac{1}{2}}(\lambda',c^\ddagger)(2d)^{\omega(c)}\cdot\Xi(c)^{\frac{1}{2}},
\end{align*}
where $c^\ddagger$ is defined as $\eqref{eq:definition-ddagger}.$
\end{theorem}

\begin{remark}
We do not intend to seek the strongest estimate for $\Sigma(\lambda,c)$. 
Our interest here is to present a square-root cancellation up to some harmless factors. 
It is not difficult to check that $\Xi(c)$ is bounded by $\log c$ on average, 
which is acceptable particularly in our applications to the quadratic Brun--Titchmarsh theorem.
\end{remark}

\smallskip

\section{Universally amiable sheaves (by Will Sawin)}\label{appendix:uasheaves}

We begin with a brief review of local monodromy representations and slopes.

Any constructible sheaf on $\mathbf A^1_{\mathbf F_p}$ is lisse on some open subset $U$ of $\mathbf A^1_{\mathbf F_p}$. This lisse sheaf is equivalent to a representation of the \'{e}tale fundamental group $\pi_1^{ \textrm{et}} (U)$ of $U$, equivalently, a representation of $\operatorname{Gal}(\mathbf F_p(X))$ unramified at every closed point in $U$. We can restrict this representation to the inertia group of a place of $\mathbf F_p(X)$, i.e. a closed point of $\mathbf P^1_{\mathbf F_p} \setminus U$. This restriction is called the \emph{local monodromy representation} of $\mathcal F$ at that point.

The inertia group of $\operatorname{Gal}(\mathbf F_p(X))$ at any point has a canonical filtration, the upper numbering filtration, into normal subgroups $I^s$ indexed by nonnegative real numbers $s$. For an irreducible representation of the inertia group, the \emph{slope} is the infimum of all $s$ such that $I^s$ acts trivially on the representation. The slope is called the \emph{break} in older literature. It is known to always be a rational number. For a representation of the inertia group, not necessarily irreducible, we say its slopes are the set of all slopes of its Jordan-H\"older factors. Because $I^s$ is a pro-$p$ group for any $s>0$ and thus acts semisimply on any $\ell$-adic representation, it follows that a representation has all slopes $\leq 1$ if and only if it is invariant under $I^s$ for all $s>1$.

\begin{definition}[Universally amiable sheaf]\label{def:universallyamiable} 
An admissible sheaf $\mathcal F_p$ on $\bP^1_{\bF_p}$ is universally amiable if it is a geometrically isotypic Fourier sheaf $($in the sense of Definition $\ref{def:fouriersheaf})$ and its local monodromy at $\infty$ has all slopes $\leq 1$. 
\end{definition}

The purpose of this definition is that it is a simple, easy to check condition that ensures that the $q$-van der Corput method discussed in the main body of the paper can be used for the trace function with an arbitary sequence of $A$- and $B$-processes.

 We provide some examples and non-examples of univerally amiable sheaves, to explain how this definition includes many sheaves whose trace functions are of interest in analytic number theory but not certain sheaves where the $q$-van der Corput method obviously runs into trouble after a fixed sequence of $A$- and $B$-processes.
 
 While all these examples have a unique slope of its local monodromy representation at $\infty$, examples with multiple slopes at $\infty$ are also common whenever slightly more complicated trace functions appear.
 
 \begin{lemma}\label{lm:ua-Kummer-Example} Let $\chi: \bF_p^\times \to \bC^\times$ be a multiplicative character of order $d>1$. Let $f \in \mathbf F_p(X)$ be a rational function. Assume that $f$ is nonconstant and has no pole or zero of order divisible by $d$. Then $\mathcal L_{\chi(f)}$ is a universally amiable sheaf.\end{lemma}
  
\begin{proof} The facts that it is a middle extension sheaf, irreducible and thus isotypic, and pure of weight zero are all standard. The main subtlety is that a pole or zero of order $d$ would cause $\mathcal L_{\chi(f)}$ to fail to be middle extension at that point, but this could be repaired by removing the corresponding $d$'th power of a factor from $f$, which would not change the trace function away from a small number of points.
 
 Because the order $d$ of $\chi$ divides $p-1$ and thus is prime to $p$, the local monodromy representation of $\mathcal L_{\chi(f)}$ at $\infty$ is invariant under $I^s$ for all $s>0$, and so its (unique) slope at $\infty$ is $0$, which is certainly $\leq 1$. Every Artin-Schreier sheaf of a linear function has slope $1$ at infinity unless it is the constant sheaf, so to check that $\mathcal L_{\chi(f)}$ has no Artin-Schreier component is suffices to check it has no constant component, which follows from the fact that $f$ is not a constant function. (We use the fact that $f$ has no poles or zeroes of order a multiple of $d$ - otherwise we would have to assume $f$ is not a constant times a $d$'th power.)\end{proof}

 \begin{lemma}\label{lm:ua-Artin-Schreier-example} Let $\psi: \bF_p \to \bC^\times $ be an additive character. Let $f \in \mathbf F_p (X) $ be a rational function of degree $<p$. Assume that the degree of the numerator of $f$ is at most one more than the degree of the denominator of $f$ (i.e. that $f$ has a pole at $\infty$ of degree at most $1$) and that $f$ is not a polynomial of degree $1$. Then $\mathcal L_{\psi(f)}$ is a universally amiable sheaf.\end{lemma}
 
\begin{proof} The facts that $\mathcal L_{\psi(f)}$ is a middle extension sheaf, irreducible and thus isotypic, and pure of weight zero are standard. (A sufficient condition to be a middle extension sheaf is that $f$ has no pole of order a multiple of $p$, but this follows from $f$ having degree less than $p$.)
 
 The unique slope of $\mathcal L_{\psi(f)}$ at $\infty$ equals the order of the pole of $f$ at $\infty$, or $0$ if $f$ does not have a pole at $\infty$. (It would be less than this if the order of the pole were a multiple of $p$, but this again cannot happen by our assumption on degree.) Since we have assumed the pole order is $\leq 1$, the slope is $\leq 1$.
 
 Two Artin-Schreier sheaves are isomorphic if their rational functions $f_1,f_2$ differ by a term of the form $g^p-g$. If $f_1$ and $f_2$ have no poles of order at least $p$, this can only happen if $f_1$ and $f_2$ differ by a constant. So to check that $\mathcal L_{\psi(f)}$ is not geometrically isomorphic to $\mathcal L_{\psi(L)}$ for $L$ a polynomial of degree $1$, it suffices to check that $f$ is not a polynomial of degree $1$ plus a constant, which is again a polynomial of degree $1$. We have assumed this, verifying the last condition.
\end{proof}

\begin{lemma}\label{lm:ua-Kloosterman-example} Let $k \geq 2$ be a natural number, and let $\mathcal K\ell_k$ by the {\rm(}normalized{\rm)} hyper-Kloosterman sheaf constructed by Deligne and studied by Katz. Then $\mathcal K\ell_k$ is universally amiable. \end{lemma}

\begin{proof} The facts that it is a middle extension sheaf, geometrically irreducible, pure of weight $0$ (after normalization), and that its unique slope at $\infty$ is $1/k \leq 1$ all follow from work of Katz \cite{Ka88}. Because it is geometrically irreducible of rank $k >1$, it has no geometrically irreducible components of rank $1$, and in particular no Artin-Schreier components. \end{proof}

\begin{lemma}\label{lm:ua-Artin-Schreier-non-example} Let $f \in \mathbf F_p[X]$ be a polynomial. Then $\mathcal L_{\psi(f)}$ is not universally amiable.\end{lemma}

\begin{proof} We may first assume that the degree of $f$, if positive, is not divisible by $p$, as replacing a term $a X^{np}$ with $a X^n$ does not affect the sheaf $\mathcal L_{\psi(f)}$ or its trace function.

If the degree of $f$ is at most $1$, then $\mathcal L_{\psi(f)}$ is clearly isomorphic to a sheaf $\mathcal L_{\psi(L)}$ for $L=f$ a polynomial of degree $\leq 1$.

If the degree of $f$ is greater than $1$, then because the unique slope of $\mathcal L_{\psi(f)}$ at $\infty$ is the degree of $f$ (here we use the assumption that the degree is not divisible by $p$) and thus is not $\leq 1$. 
\end{proof}

\begin{lemma}\label{ua-FT-Artin-Schreier-non-example} Let $f \in \mathbf F_p[X]$ be a polynomial of degree $d$. Assume that $d \geq 2$ and $d$ is prime to $p$. Then $\operatorname{FT}_\psi ( \mathcal L_{\psi(f)})$ is not universally amiable.\end{lemma}

\begin{proof} Because $d$ is prime to $p$, the unique slope of the local monodromy representation of $\mathcal L_{\psi(f)}$ at $\infty$ is $d$. It then follows from Laumon's local Fourier transform theory \cite[Theorem 7.4.1(1)]{Ka90} that the local monodromy representation of $\operatorname{FT}_\psi ( \mathcal L_{\psi(f)})$ at $\infty$ has slope $\frac{d}{d-1}>1$, and thus  $\operatorname{FT}_\psi ( \mathcal L_{\psi(f)})$  is not universally amiable. \end{proof}

We now prove two lemmas that show the property of being universally amiable is stable under the sheaf-theoretic analogues of the $A$- and $B$-processes in the $q$-van der Corput method.

\begin{lemma}\label{lm:ua-A-process} Suppose $\mathcal F$ is a universally amiable sheaf on $
\bP^1_{\mathbf  F_p}$ with $\mathfrak c (\mathcal F) \leq p$ for some $p>2$. Then for each $a \in \mathbf F_p^\times$, there exist a subset $A\subset\bF_p$ and a sequence of universally amiable sheaves $\{\cF_j\}_{1\leqslant j\leqslant J}$ with $1\leqslant J\leqslant\mathfrak c(\mathcal F)^2,$ such that
the trace function of $[+a]^* \mathcal F \otimes \widecheck{\cF}$ can be expressed as
\begin{align*}
\sum_{1\leqslant j\leqslant J}K_j+\sum_{a\in A}\delta_a,
\end{align*}
where $K_j$ is the trace function of $\cF_j$ with $\fc(\cF_j)\leqslant \mathfrak c(\mathcal F)^4$ for each $1\leqslant j\leqslant J$, and $\sum_{a\in A}(1+\|\delta_a\|_\infty)\leqslant 2\fc(\cF)^3.$ 
\end{lemma}

\begin{proof} The tensor product is a middle extension away from points which are singular points of both $\cF$ and $\widecheck{\cF}$. The number of these is at most the number of singular points of $\cF$, which is at most $\mathfrak c(\cF)$. Replacing $[+a]^* \mathcal F \otimes \widecheck{\cF}$ by a middle extension sheaf changes the trace function by at most the rank of $[+a]^* \mathcal F \otimes \widecheck{\cF}$, which is the square of the rank of $\mathcal F$ and thus is at most the square of the conductor of $\mathcal F$.

Having done this, we can write the trace function as a sum of geometrically isotypic components and components with zero trace function (see \cite[Proposition 8.3]{FKM15}). Ignoring the components with zero trace function, the number of components is at most the rank of $[+a]^* \mathcal F \otimes \widecheck{\cF}$, which is again at most the square of the the conductor, and each component has conductor at most $5 \mathfrak c (\mathcal F)^4 $ by \cite[Proposition 8.2(3)]{FKM15}.

The remaining conditions to check are the condition on the local monodromy at $\infty$ and the condition on the geometrically irreducible components.

Because the local monodromy representation of $\cF$ at $\infty$ is invariant under $I^s$ for every $s>1$, the same is true of its dual representation, which is the local monodromy of $\widecheck{\cF}$, and its pullback by an automorphism of the local field, which is the local monodromy of $[+a]^* \cF$, because the higher numbering filtration is defined intrinsically and so is invariant under automorphisms. Because both tensor factors are invariant under $I^s$ for all $s>1$, the tensor product is as well, and so it has all slopes $\leq 1$.

Lastly, to check that (the middle extension analogue of) $[+a]^* \mathcal F \otimes \widecheck{\cF}$ contains no geometrically isotypic component isomorphic to $\mathcal L_{\psi(L)}$ for linear $L$, it suffices by \cite[Theorem 6.15]{Po14} to show that $\mathcal F$ has conductor $\leq p$ (which we assumed) and that $\mathcal F$ has no geometrically isotypic component isomorphic to $\mathcal L_{\psi(Q)} $ for a polynomial $Q$ of degree $\leq 2$. If $Q$ has degree $\leq 1$, this follows from our assumption that $\mathcal F$ has no such component. If $Q$ has degree exactly $2$, this follows from our assumption that the local monodromy representation of $\mathcal F$ at $\infty$ has all slopes $ \leq 1$, so all its components have all slopes $\leq 1$, but the local monodromy of $\mathcal L_{\psi(Q)}$ at $\infty$ has unique slope $2$ and so cannot be a component of the local monodromy of $\mathcal F$. \end{proof}

\begin{lemma}\label{lm:ua-B-process} Suppose that $\cF$ is a universally amiable sheaf on $\bP^1_{\mathbf F_p}$. Then $\operatorname{FT}_\psi(\cF)$ is universally amiable. \end{lemma}

\begin{proof} Because $\mathcal F$ is a middle extension sheaf and has no geometrically irreducible component isomorphic to an Artin-Schreier sheaf of a linear polynomial, it is a Fourier sheaf in the sense of \cite[(7.3.5)]{Ka90}. Furthermore, it is pure of weight $0$. Thus by \cite[Theorem 7.3.8(2,5)]{Ka90} its Fourier transform is Fourier and pure of weight $0$ (where we put a half-Tate twist in the definition of the Fourier transform that Katz does not). Because it is Fourier, it is a middle extension sheaf, and has no geometrically irreducible component isomorphic to an Artin-Schreier sheaf of a linear function.

Furthermore, because $\cF$ is geometrically isotypic, and the Fourier transform sends its identical geometrically irreducible components to identical geometrically irreducible components \cite[Theorem 7.3.8(3)]{Ka90}, its Fourier transform is also geometrically isotypic.

It remains to prove that the local monodromy representation of $\operatorname{FT}_\psi(\cF)$ at $\infty$ has all slopes $\leq 1$.

Laumon proved that the local monodromy representation of $\operatorname{FT}_\psi(\cF)$ at $\infty$ is a sum of contributions arising from the local monodromy representations fo $\cF$ at various points of $\mathbf P^1_{\mathbf F_p}$. Moreover, the contribution from the local monodromy representation at $0$ has all slopes $<1$, the contribution from any point other than $0$ or $\infty$ has all slopes $1$, and the contribution from $\infty$ has all slopes $>1$, but is nonvanishing if and only if the local mondromy representation of $\cF$ at $\infty$ has some slope $>1$. This is explained in \cite[Theorem 7.4.1]{Ka90}. Thus, because the local monodromy represntation of $\cF$ at $\infty$ has all slopes $\leq 1$ at $\infty$, the local monodromy representation of $\operatorname{FT}_\psi(\cF)$ at $\infty$ has all slopes $\leq 1$ as well. 
\end{proof}

\bibliographystyle{plain}

\bigskip

\end{document}